\documentclass[11pt,onecolumn,oneside]{article}
\usepackage[a4paper, hmargin={2.8cm, 2.8cm}, vmargin={2.5cm, 2.5cm}]{geometry}
\usepackage{pdfcomment}

\usepackage[utf8]{inputenc}      		 
\usepackage[T1]{fontenc}                 
\usepackage{graphicx, color}             
\usepackage{textcomp}                    
\usepackage{amssymb}
\usepackage{amsmath}
\usepackage{amsthm}						 
\usepackage{mathrsfs}
\usepackage{caption, subcaption}
\usepackage{wrapfig}
\usepackage{gensymb}
\usepackage{mathtools}
\usepackage{bm} 
\usepackage{subfiles}
\usepackage[normalem]{ulem}              

\captionsetup[figure]{labelfont={bf,it},textfont=it}
\captionsetup[subfigure]{labelfont=bf,textfont=it,singlelinecheck=off,justification=raggedright}

\linespread{1}					
\setcounter{tocdepth}{3}		
\setcounter{secnumdepth}{3}		

\DeclareFontFamily{U}{mathx}{\hyphenchar\font45}
\DeclareFontShape{U}{mathx}{m}{n}{
      <5> <6> <7> <8> <9> <10>
      <10.95> <12> <14.4> <17.28> <20.74> <24.88>
      mathx10
      }{}
\DeclareSymbolFont{mathx}{U}{mathx}{m}{n}
\DeclareFontSubstitution{U}{mathx}{m}{n}
\DeclareMathAccent{\widecheck}{0}{mathx}{"71}
\DeclareMathAccent{\wideparen}{0}{mathx}{"75}

\usepackage{tikz}
\usetikzlibrary{cd}
\usepackage{verbatim}

\hyphenation{}				 	
\usepackage{enumerate}	      	
\usepackage{hyperref}			
\usepackage{xcolor}				
\hypersetup{					
    colorlinks,
    linkcolor={red!50!black},
    citecolor={green!30!black},
    urlcolor={blue!80!black},
}
\urlstyle{same}   				

\usepackage{xkvltxp} 
\usepackage[draft,nomargin,inline,index]{fixme} 
\fxsetup{theme=color}

\newcommand{\numberthis}{\addtocounter{equation}{1}\tag{\theequation}}




\DeclareMathOperator{\supp}{supp}  

\newcommand{\integral}[3]{\int_{#2} #1\,\mathrm{d}#3}




\DeclareMathOperator{\Iso}{Iso} 


\DeclareMathOperator{\Rep}{Rep}






\DeclareMathOperator{\id}{id}



\newcommand{\set}[2]{\left\{\,#1\;\middle|\; #2\,\right\}}

\makeatletter
\def\moverlay{\mathpalette\mov@rlay}
\def\mov@rlay#1#2{\leavevmode\vtop{%
   \baselineskip\z@skip \lineskiplimit-\maxdimen
   \ialign{\hfil$\m@th#1##$\hfil\cr#2\crcr}}}
\newcommand{\charfusion}[3][\mathord]{
    #1{\ifx#1\mathop\vphantom{#2}\fi
        \mathpalette\mov@rlay{#2\cr#3}
      }
    \ifx#1\mathop\expandafter\displaylimits\fi}
\makeatother



\newcommand{\net}[3]{\left(#1_{#2}\right)_{#2\in#3}}


\newcommand{\norm}[2]{\ensuremath{\left|\!\left|#1\right|\!\right|_{#2}}}
\newcommand{\normb}[2]{\ensuremath{\bigl|\!\bigl|#1\bigr|\!\bigr|_{#2}}}

\newcommand{\normdot}[1]{\ensuremath{\left|\!\left|\,\cdot\,\right|\!\right|_{#1}}}

\newcommand{\abs}[1]{\ensuremath{\left|#1\right|}}

\numberwithin{equation}{section}

\usepackage{etoolbox}
\newcommand{\addQEDstyle}[2]{\AtBeginEnvironment{#1}{\pushQED{\qed}\renewcommand{\qedsymbol}{#2}}\AtEndEnvironment{#1}{\popQED}}

\theoremstyle{plain}
\newtheorem{thm}{Theorem}
\numberwithin{thm}{section}
\newtheorem{lem}[thm]{Lemma}
\newtheorem{cor}[thm]{Corollary}
\newtheorem{prop}[thm]{Proposition}
\newtheorem{subthm}{Theorem}

\theoremstyle{definition}
\newtheorem{defn}[thm]{Definition}

\addQEDstyle{ex}{$\circ$}

\newtheorem{rk}[thm]{Remark}

\newtheoremstyle{warning}
{\topsep}
{\topsep}
{}
{}
{\bfseries}
{.}
{.5em}
{}
\theoremstyle{warning}
\newtheorem{warning}[thm]{Warning}

\newtheoremstyle{question}
{\topsep}
{\topsep}
{}
{}
{\bfseries}
{.}
{.5em}
{}
\theoremstyle{question}

\theoremstyle{plain}
\newtheorem{introtheorem}{Theorem}
\newtheorem{introcorollary}[introtheorem]{Corollary}

\theoremstyle{statement}
\newtheorem{introdefinition}[introtheorem]{Definition}


\title{Property $(\mathrm{T})$ for Banach algebras}

\author{Emilie Mai Elki\ae{}r and Sanaz Pooya}
\date{\today}

\begin{document}
\maketitle
\begin{abstract}
We define and study the notion of property $(\rm T)$ for Banach algebras, generalizing the one from $C^*$-algebras. For a second countable locally compact  group $G$ and a given family of Banach spaces $\mathcal E$, we prove that our Banach algebraic property $(\rm{T}_{\mathcal E})$ of the symmetrized pseudofunction algebras $F^*_{\mathcal E}(G)$ characterizes the Banach property $(\rm{T}_{\mathcal E})$ of Bader, Furman, Gelander and Monod for groups. 
In case $G$ is a discrete group and $\mathcal E$ is the class of $L^p$-spaces for $1\leq p < \infty$, we also achieve the analogue characterization using the symmetrized $p$-pseudofunction algebras $F^*_{\lambda_ p}(G)$. 
\end{abstract}

\section{Introduction}
In 1967 David Kazhdan introduced in \cite{Kazhdan1967} the notion of property $(\rm T)$ for groups in order to prove finite generation of lattices in higher rank Lie groups. Today Kazhdan's property $(\rm T)$ is a central notion of analytic group theory being used in numerous proofs including Margulis superrigidity theorem.
A second countable locally compact group $G$ has property $(\rm T)$ if, whenever a unitary representation of $G$ contains a net of almost invariant vectors, it has a non-zero invariant vector.
Lattices in higher rank semisimple Lie groups, as well as lattices in $Sp(1, n)$ enjoy this property. We refer the reader to \cite{BekkaDeLaHarpeValette} for a comprehensive treatment of Kazhdan's property $(\rm T)$.

Classically, property $(\rm T)$ concerns unitary representations and the class of complex Hilbert spaces. In \cite{BaderFurmanGelanderMonod}, Bader, Furman, Gelander and Monod extended this notion to isometric representations on Banach spaces. They showed that Kazhdan's property $(\rm T)$ is equivalent to their Banach algebraic property $(\mathrm{T}_{L^p})$ for $1 \leq p < \infty$.

On the operator algebraic side, the notion of property $(\rm T)$ was brought to von Neumann algebras by Connes in \cite{Connes1980Factorfundamentalgroup} for type  {II}$_1$-factors and later by Connes and Jones in \cite{ConnesJones1985PropertyTvonN} for general von Neumann algebras, where unitary representations were replaced by bimodules (or Connes correspondences).
Turning to $C^*$-algebras, Bekka adopted Connes's definition and formulated the notion of property $(\rm T)$ for unital $C^*$-algebras in \cite{Bekka2005PropertyTCstar} and Ng defined two versions of property $(\rm T)$ for general $C^*$-algebras in \cite{Ng2013PropertyTCstar}. Although property $(\rm T)$ for $C^*$-algebras is a younger topic compared with von Neumann algebras, it already proved useful in applications.  For example, connections with nuclearity (i.e., amenability) of $C^*$-algebras, Haagerup property for $C^*$-algebras, and property $(\rm T)$ of quantum groups were studied respectively in \cite{Brown2006PropertyTandC-algebras}, in \cite{Suzuki2013HaagerupT} and  \cite{Meng2017HaagerupCalgebrasCros}, and in \cite{KyedSoltan2012PropertyTExoticQuantumgroup}.

In this article we generalize the notion of property $(\rm T)$ to Banach algebras that possess a bounded approximate unit. Our aim is to characterize our Banach algebraic property $(\mathrm{T}_{\mathcal{E}})$ in terms of the group theoretic property $(\mathrm{T}_{\mathcal{E}})$ of Bader, Furman, Gelander and Monod \cite{BaderFurmanGelanderMonod}. Our definition, which we state below, adopts the stronger version of property $(\rm T)$ of Ng in Definition 2.1 in \cite{Ng2013PropertyTCstar} as it is stated in section 2 of \cite{BekkaNg2018PropertyTCstar}. Note that our terminology is different from that of Bekka and Ng; see Warning \ref{warning- terminalogy}.

\begin{introdefinition}(Definition \ref{def:Strong-(T_E)})
Let $\mathcal{A}$ be a Banach algebra with a bounded approximate unit and let $\mathcal E$ be a class of Banach spaces. We say that $\mathcal{A}$ has property $(\mathrm{T}_{\mathcal{E}})$ if, whenever $E\in\mathcal{E}$ is an essential $\mathcal{A}$-bimodule admitting a strictly almost $\mathcal{A}$-central net of unit vectors $\net{\xi}{i}{I}$ then there exists a net of central vectors $\net{\eta}{i}{I}$ in $E$ such that $\norm{\xi_i-\eta_i}{E}\rightarrow 0$.
\end{introdefinition}

We further define a weaker version of this property (see Definition \ref{def:weak_(T_E)}), where we only require the existence of a non-zero central vector. The weak version is referred to as \emph{weak property $(\mathrm{T}_{\mathcal{E}})$} and it is an extension of the weaker version of property $(\rm T)$ of Bekka and Ng for $C^*$-algebras (see Definition 6 in \cite{Bekka2005PropertyTCstar} for the unital case and Definition 2.1 in \cite{Ng2013PropertyTCstar} for the general case). When $\mathcal{A}$ is unital, our weak property $(\mathrm{T}_{\mathcal{E}})$ coincides with the definition suggested by Bekka in Remark 18 of \cite{Bekka2005PropertyTCstar} of a Banach space version of property $(\rm T)$ for arbitrary normed algebras.
When $\mathcal{A}$ is a $C^*$-algebra and we consider the class of complex Hilbert spaces, our definitions coincide with Ng's \cite{Ng2013PropertyTCstar} in the general case, and with Bekka's  \cite{Bekka2005PropertyTCstar} in the unital case.

Given a locally compact group $G$, we consider Banach algebras constructed from actions on a family of Banach spaces $\mathcal{E}$ referred to as 
pseudofunction algebras $F_{\mathcal{E}}(G)$. We shall consider both the pseudofunction algebra $F_{\mathcal{E}}(G)$ and its symmetrized version $F^*_{\mathcal{E}}(G)$. See paragraph \ref{prili:SymPseudo} in the preliminaries for a definition. We are particularly interested in the symmetrized pseudofunction algebra associated with the class of representations on $L^p$-spaces, $F^*_{L^p}(G)$, as well as the symmetrized version of the $p$-pseudofunction algebra, $F^*_{\lambda_p}(G)$.
Pseudofunction algebras were first studied by Herz already in the 1970's \cite{Herz1971p-spacesApplicationConvolution}, and they have subsequently been studied intensely in the context of harmonic analysis. Later they appeared in the work of Phillips, who provided an operator algebraic ground for studying these Banach algebras (see, e.g., \cite{Phillips2013SimplicityCuntz}, \cite{Phillips2017ClassificationAF}, and \cite{BlecherPhillips2019ApproximateIdentI}.) The symmetrized version of the $p$-pseudofunction algebra was first considered by Kasparov and Yu in \cite{KasparovYu_inprep}, and were studied in \cite{LiaoYu2017K} in the context of the Baum-Connes conjecture, in \cite{SameiWiersma2020Quasi-Hermitian} in the context of quasi-Hermitian groups as well as in \cite{SameiWiersma2018Exotic} in the context of exotic group $C^*$-algebras.

The main results of this article characterizes our Banach algebraic property $\mathrm{(T_{\mathcal E})}$ for symmetrized pseudofunction algebras in terms of Bader \textit{et al}'s property $\mathrm{(T_{\mathcal E})}$, for a general class of Banach spaces $\mathcal{E}$ and, in particular, for the class $L^p$ consisting of $L^p$-spaces on $\sigma$-finite measure spaces. We state the results here for the symmetrized pseudofunction algebras, but they hold for the non-symmetrized versions, as well.


\begin{introtheorem} \label{thm A}
(Theorem \ref{thm:(T_E)_for_G_iff_for_FstarE(G)} and \ref{thm:(T_E)_for_G_iff_for_FstarE(G)weak})
Let $G$ be a locally compact group and let $\mathcal{E}$ be a class of Banach spaces. The following are equivalent:
\begin{enumerate}[(i)]
\item $G$ has (weak) property $(\mathrm{T}_{\mathcal{E}})$,
\item $F^*_{\mathcal{E}}(G)$ has (weak) property $(\mathrm{T}_{\mathcal{E}})$.
\end{enumerate}
\end{introtheorem}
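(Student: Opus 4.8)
The plan is to build a dictionary between isometric representations of $G$ on Banach spaces belonging to $\mathcal E$ and essential $F^*_{\mathcal E}(G)$-bimodules on the same spaces, under which almost invariant vectors correspond to strictly almost central vectors and invariant vectors to central vectors. Once this dictionary is in place, both implications (i)$\Rightarrow$(ii) and (ii)$\Rightarrow$(i) follow by transporting the hypothesis of one property into the setting of the other and invoking the property there. The strong and weak versions are handled simultaneously, the only difference being whether one tracks the approximation $\norm{\xi_i-\eta_i}{E}\to 0$ or merely the existence of a non-zero central, respectively invariant, vector.

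First I would make the correspondence on objects precise. Given an isometric representation $\pi\colon G\to\Iso(E)$ with $E\in\mathcal E$, the integrated form $f\mapsto\int_G f(g)\pi(g)\,dg$ extends, by the very definition of the symmetrized pseudofunction norm, to a bounded representation of $F^*_{\mathcal E}(G)$ on $E$. Pairing such a representation (as left action) with a second, commuting isometric representation (as right action) turns $E$ into an $F^*_{\mathcal E}(G)$-bimodule; here the symmetrization is essential, since via the involution $f\mapsto f^*$ it guarantees that the right action, a priori a representation of the opposite algebra, is still governed by the class $\mathcal E$. Conversely, I would disintegrate an essential bimodule into a commuting pair of isometric group representations $g\mapsto L_g$ and $g\mapsto R_g$: the bounded approximate unit of $F^*_{\mathcal E}(G)$, inherited from $L^1(G)$, together with Cohen factorization, recovers the group action and forces it to be isometric and strongly continuous, while essentiality corresponds to non-degeneracy.

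Next I would match the distinguished vectors. A vector is central for the bimodule precisely when it is invariant under the conjugation representation $g\mapsto L_g R_{g^{-1}}$, which is again an isometric representation on the same $E\in\mathcal E$; moreover every isometric representation on a space in $\mathcal E$ arises this way, by taking the right action trivial. The core estimate is that, schematically, $f\cdot\xi-\xi\cdot f=\int_G f(g)\bigl(L_g R_{g^{-1}}\xi-\xi\bigr)\,dg$, so that a strictly almost central net is controlled by, and controls, the quantities $\norm{L_g R_{g^{-1}}\xi_i-\xi_i}{E}$ uniformly over $g$ in compact sets; the bounded approximate unit is used to pass from smallness tested against a single $f$ near the unit back to this compact-uniform smallness, which is exactly the almost invariance of Bader \textit{et al}. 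The same bookkeeping converts $\norm{\xi_i-\eta_i}{E}\to 0$ with $\eta_i$ central into approximability by invariant vectors, and conversely.

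I expect the main obstacle to be this final translation: reconciling the two-sided, strict formulation of property $(\mathrm T_{\mathcal E})$ with the one-sided, compact-uniform formulation of Bader \textit{et al}, while remaining inside the class $\mathcal E$ in both directions. In particular, the disintegration must yield genuinely isometric—not merely bounded—group representations, and the conjugation representation $g\mapsto L_g R_{g^{-1}}$ must be seen to land back in $\mathcal E$; it is precisely here that the symmetrization, which supplies the duality identifying the right action with a representation in the same class, has to be invoked with care.
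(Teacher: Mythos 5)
Your route is the same as the paper's: representations of $G$ on spaces in $\mathcal{E}$ are matched with essential $F^*_{\mathcal{E}}(G)$-bimodules (taking the trivial right action $1_G$ in one direction), a bimodule with actions $\varphi,\psi$ is matched with the conjugation representation $t\mapsto\varphi(t)\psi(t^{-1})$, central vectors correspond to invariant vectors, and an integral estimate transports almost invariance to almost centrality. The problem is that the step you yourself flag as ``the main obstacle'' --- reconciling the strict, two-sided formulation with the compact-uniform one of Bader \textit{et al} --- is exactly where the paper has to do real work, and your proposal leaves it unresolved in both directions; it is not bookkeeping with the bounded approximate unit.

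In the direction from property $(\mathrm{T}_{\mathcal{E}})$ of $F^*_{\mathcal{E}}(G)$ to property $(\mathrm{T}_{\mathcal{E}})$ of $G$, your integral identity (exact here, since the right action is $1_G$) shows only that an almost invariant net is almost $F^*_{\mathcal{E}}(G)$-central against \emph{finite} sets of algebra elements, whereas Definition \ref{def:Strong-(T_E)} demands a net that is \emph{strictly} almost central, i.e.\ uniformly over strictly compact subsets of $\mathrm{M}(F^*_{\mathcal{E}}(G))$. The paper bridges this with a dedicated lemma (Lemma \ref{lem:bimodule_str_from_rep_almost_central_vectors}): one fixes $a_0$ with $1_G(a_0)=1$, replaces $\xi_i$ by $\pi(a_0)\xi_i/\norm{\pi(a_0)\xi_i}{E}$, and exploits the structure of strictly compact sets $S\subset\mathrm{M}(F^*_{\mathcal{E}}(G))$ (Lemma \ref{lem:str_cpt_properties}: norm boundedness together with finite $\varepsilon$-approximation of $\{xa_0 : x\in S\}$ in the algebra norm) to obtain the estimate uniformly over all of $S$; nothing in your sketch produces this uniformity. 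In the converse direction there is a second gap: from a strictly almost central net you must extract almost invariance of the conjugation representation uniformly on \emph{compact subsets} of $G$ (pointwise smallness in $t$ does not suffice for non-discrete $G$), and your proposal to get this from ``smallness tested against a single $f$ near the unit'' founders on the fact that $\varphi(f_U)\xi_i\to\xi_i$ holds for each fixed $i$ but not uniformly along the net. The paper avoids this entirely by observing that group elements act as multipliers and the embedding $G\hookrightarrow\mathrm{M}(F^*_{\mathcal{E}}(G))$ is strictly continuous (cf.\ Proposition \ref{prop:DC_bimodule_structure_encodes_gp_action}), so a compact subset of $G$ \emph{is} a strictly compact set of multipliers to which the strict hypothesis applies verbatim --- this is precisely why the definition is phrased with strictly compact sets, and it is the one ingredient your dictionary is missing.
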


Under the assumption that $\mathcal{E}$ is the class $L^p$-spaces, Theorem \ref{thm A} implies:

\begin{introcorollary} \label{cor B}
(Corollary \ref{cor:FLp_has_TLq_lcgp})
Let $G$ be a second countable locally compact group with property $(\mathrm{T})$ and let $1\leq p\leq2$. Then $F^*_{L^p}(G)$ has property $(\mathrm{T}_{L^q})$, for all $1\leq q\leq p$ and all $p'\leq q<\infty$, where $p'$ is the Hölder conjugate of $p$.
\end{introcorollary}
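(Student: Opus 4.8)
The plan is to deduce the corollary from Theorem~\ref{thm:(T_E)_for_G_iff_for_FstarE(G)} together with two extra ingredients: a group-level input from \cite{BaderFurmanGelanderMonod} and a transfer of property $(\mathrm{T}_{L^q})$ along a contractive homomorphism between pseudofunction algebras of differing exponents. First, since $G$ has property $(\mathrm{T})$, the results of Bader, Furman, Gelander and Monod give that $G$ has property $(\mathrm{T}_{L^q})$ for every $1\le q<\infty$. Feeding this into Theorem~\ref{thm:(T_E)_for_G_iff_for_FstarE(G)} with $\mathcal E$ the class of $L^q$-spaces shows that $F^*_{L^q}(G)$ has property $(\mathrm{T}_{L^q})$ for each such $q$. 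The remaining task is to move from the algebra $F^*_{L^q}(G)$ to the algebra $F^*_{L^p}(G)$, and this is where the constraint on $q$ will enter.

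For the range $1\le q\le p\le 2$ I would use the classical isometric embedding $L^p\hookrightarrow L^q$ built from $p$-stable random variables, together with its naturality: every isometric representation of $G$ on an $L^p$-space embeds $G$-equivariantly and isometrically into an isometric representation on an $L^q$-space, because tensoring with a fixed stable variable intertwines the weighted-composition (Banach--Lamperti) form of the isometries. Consequently, for $f\in C_c(G)$, both $\|\pi(f)\|$ and $\|\pi(f^*)\|$ for an $L^p$-representation $\pi$ are dominated by the corresponding quantities for the enveloping $L^q$-representation, so the symmetrized norms satisfy $\|f\|_{F^*_{L^p}(G)}\le\|f\|_{F^*_{L^q}(G)}$. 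Extending the identity on $C_c(G)$ then yields a contractive homomorphism $\theta_q\colon F^*_{L^q}(G)\to F^*_{L^p}(G)$ with dense range. For the range $p'\le q<\infty$ I would reduce to the previous case via the symmetrization: the contragredient of an isometric $L^q$-representation is an isometric $L^{q'}$-representation, and the involution exchanges $\|\pi(f)\|$ with $\|\pi(f^*)\|$, so the symmetrized norms of $F^*_{L^q}(G)$ and $F^*_{L^{q'}}(G)$ coincide. Since $q\ge p'$ forces $q'\le p\le 2$, this again produces a dense-range contractive homomorphism $F^*_{L^q}(G)\to F^*_{L^p}(G)$.

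Next I would establish a general transfer principle: if $\theta\colon\mathcal A\to\mathcal B$ is a contractive homomorphism with dense range between Banach algebras with bounded approximate units, and $\mathcal A$ has property $(\mathrm{T}_{\mathcal E})$, then so does $\mathcal B$. Given an essential $L^q$-bimodule $E$ over $\mathcal B=F^*_{L^p}(G)$ carrying a strictly almost central net of unit vectors, pull the bimodule structure back along $\theta$ to make $E$ an $\mathcal A$-bimodule. Density of $\theta(\mathcal A)$ and continuity of the module actions show that $E$ is still essential and that the net is strictly almost $\mathcal A$-central; property $(\mathrm{T}_{\mathcal E})$ of $\mathcal A$ then supplies a nearby net of $\mathcal A$-central vectors, and density of the range promotes $\mathcal A$-centrality to $\mathcal B$-centrality. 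Applying this with $\mathcal A=F^*_{L^q}(G)$, $\mathcal B=F^*_{L^p}(G)$, $\mathcal E=L^q$ and $\theta=\theta_q$ completes the argument.

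The group-theoretic input is cited, and the transfer principle is essentially a formal manipulation of the definitions once one checks that a bounded approximate unit of $\mathcal A$ maps to one of $\mathcal B$ and that the \emph{strictly} almost central condition is preserved under pullback. The main obstacle is the construction of the homomorphisms $\theta_q$: one must verify that the embedding $L^p\hookrightarrow L^q$ can be realized $G$-equivariantly for an \emph{arbitrary} isometric representation, not only for those induced by measure-preserving actions, and that the symmetrization genuinely identifies the $L^q$- and $L^{q'}$-norms on $C_c(G)$. These are exactly the points at which the hypotheses $p\le 2$ and $\min(q,q')\le p$ are used, and they carve out precisely the stated range of admissible $q$.
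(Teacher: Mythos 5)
Your proposal has exactly the same skeleton as the paper's proof of Corollary \ref{cor:FLp_has_TLq_lcgp}: use \cite[Thm. A(i)]{BaderFurmanGelanderMonod} to get property $(\mathrm{T}_{L^q})$ for $G$ for all $1\le q<\infty$; apply Theorem \ref{thm:(T_E)_for_G_iff_for_FstarE(G)} to get property $(\mathrm{T}_{L^q})$ for $F^*_{L^q}(G)$; produce a contractive, dense-range homomorphism $F^*_{L^q}(G)\to F^*_{L^p}(G)$ for the stated range of $q$; and conclude with a permanence result, which is word-for-word the paper's Proposition \ref{prop:TE_preserved_under_hom_w_dense_im}, so you do not need to re-derive it. Your duality reduction for $p'\le q<\infty$ is also sound: for $1<q<\infty$ the class $\Rep_{L^q}(G)\cup\Rep_{L^{q'}}(G)$ is closed under duality, so $F^*_{L^q}(G)=F^*_{L^{q'}}(G)$ isometrically, and $q\ge p'$ gives $q'\le p\le 2$.

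The one point where you genuinely diverge is that the paper does not prove the existence of the homomorphism $F^*_{L^q}(G)\to F^*_{L^p}(G)$ at all; it cites Gardella--Thiel \cite[Thm. 2.30]{GardellaThiel2014GpAlgActingOnLp} (with a footnote indicating an alternative via interpolation in the symmetrized setting), whereas you sketch a construction --- and your sketch of the key step is incorrect as stated. The isometric embedding $L^p\hookrightarrow L^q$ for $q<p\le 2$ is not ``tensoring with a fixed $p$-stable variable'': the map $f\mapsto f\otimes\theta$ satisfies $\|f\otimes\theta\|_{L^q}=\|f\|_{L^q}\,\|\theta\|_{L^q}$, so it preserves the $L^q$-norm of $f$ (when finite), not the $L^p$-norm. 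The genuine embedding is the $p$-stable stochastic integral $f\mapsto\int f\,\mathrm{d}M_p$, which sends disjointly supported functions to \emph{independent} stable variables and is in no sense a multiplication or tensor operator; consequently the claim that it ``intertwines the weighted-composition (Banach--Lamperti) form of the isometries'' does not follow by inspection. Upgrading an arbitrary isometric $L^p$-representation to a compatible isometric action on the target $L^q$-space is a genuine theorem (one route is Hardin's extension theorem for isometries defined on subspaces of $L^q$), and it is precisely the content of the cited Gardella--Thiel result. So the step you yourself flag as ``the main obstacle'' is the only mathematical content of this corollary beyond Theorem \ref{thm:(T_E)_for_G_iff_for_FstarE(G)} and Proposition \ref{prop:TE_preserved_under_hom_w_dense_im}, and your proposed justification of it fails as written; you should either carry out the Hardin-type argument in detail or cite \cite[Thm. 2.30]{GardellaThiel2014GpAlgActingOnLp} as the paper does.
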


Assuming further that the group is discrete, Theorem \ref{thm A} implies: 

\begin{introcorollary} \label{cor C}
(Corollary \ref{cor:FLp_has_TLq_disgp})
Let $\Gamma$ be a discrete group with property $(\rm T)$ and let $1\leq p\leq2$. Then $F^*_{L^p}(\Gamma)$ has property $(\mathrm{T}_{L^q})$, for all $1\leq q<\infty$.
\end{introcorollary}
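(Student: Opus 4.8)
The plan is to reduce property $(\mathrm T_{L^q})$ of the algebra to property $(\mathrm T_{L^q})$ of the group and then invoke Bader--Furman--Gelander--Monod. First, Corollary~\ref{cor B} already gives property $(\mathrm T_{L^q})$ for $F^*_{L^p}(\Gamma)$ whenever $q\in[1,p]\cup[p',\infty)$, so the only new range is the open gap $q\in(p,p')$. This gap is empty when $p=2$, and for $1\le p<2$ it satisfies $(p,p')\subset(1,\infty)$; in particular $L^q$ is reflexive throughout the new range, while the non-reflexive value $q=1$ is always handled by Corollary~\ref{cor B}. The feature of discreteness I would exploit is that $F^*_{L^p}(\Gamma)$ is \emph{unital}, with unit $\delta_e$, and that each $\delta_\gamma$ is a unitary; hence in any essential bimodule the elements $\delta_\gamma$ act by commuting invertible isometries, both on the left and on the right.

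Fix $q\in(p,p')$ and let $E$ be an essential $F^*_{L^p}(\Gamma)$-bimodule which is an $L^q$-space, with a strictly almost central net of unit vectors $(\xi_i)_{i\in I}$. Writing $L_\gamma$ and $R_\gamma$ for the left and right actions of $\delta_\gamma$, I would define an isometric representation of $\Gamma$ on the $L^q$-space $E$ by conjugation, $\pi(\gamma)=L_\gamma R_{\gamma^{-1}}$. Since $L$ is a homomorphism, $R$ an anti-homomorphism, and the two commute, $\pi$ is a genuine group homomorphism into the invertible isometries of $E$, i.e.\ an isometric $\Gamma$-representation on an $L^q$-space. This conjugation construction is exactly the dictionary underlying Theorem~\ref{thm A}, now applied with mismatched indices: a vector is central for the bimodule iff it is $\pi$-invariant, and $\|\pi(\gamma)\xi_i-\xi_i\|=\|L_\gamma\xi_i-R_\gamma\xi_i\|=\|\delta_\gamma\xi_i-\xi_i\delta_\gamma\|\to0$, so the almost central net becomes an almost $\pi$-invariant net.

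Now I would feed in the group input. Because $\Gamma$ has property $(\mathrm T)$, the theorem of Bader, Furman, Gelander and Monod \cite{BaderFurmanGelanderMonod} grants it property $(\mathrm T_{L^q})$ for every $1\le q<\infty$. Applying this to $\pi$ yields $\pi$-invariant vectors $\eta_i$ with $\|\xi_i-\eta_i\|\to0$; reading the $\eta_i$ back through the dictionary, they are central vectors of the bimodule approximating $(\xi_i)_{i\in I}$, which is precisely property $(\mathrm T_{L^q})$ for $F^*_{L^p}(\Gamma)$. Together with Corollary~\ref{cor B} on the complementary range, this covers all $1\le q<\infty$.

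The step I expect to be the crux is the \emph{strong} approximation $\|\xi_i-\eta_i\|\to0$, as opposed to merely extracting a single nonzero invariant vector. This is where the strong form of the group property is essential: BFGM supply a canonical $\pi$-invariant decomposition $E=E^{\pi}\oplus E_0$ with a contractive projection $P$ onto $E^{\pi}$ and no almost-invariant vectors in $E_0$, so that $\eta_i:=P\xi_i$ is invariant and $\|\xi_i-\eta_i\|=\|(\mathrm{id}-P)\xi_i\|\to0$; the existence of $P$ rests on the reflexivity of $L^q$ for $q\in(p,p')$, which is exactly why I restrict the conjugation argument to the gap and defer the endpoints to Corollary~\ref{cor B}. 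Finally, I would emphasise that discreteness is indispensable at precisely one point: it places the group elements inside the unital algebra $F^*_{L^p}(\Gamma)$ as honest unitaries, so that an $L^q$-bimodule restricts to an honest isometric $L^q$-representation and the conjugation dictionary is exact for every $q$. For a general locally compact $G$ this fails in the gap $(p,p')$---the group elements are no longer available in $F^*_{L^p}(G)$ and such $L^q$-representations need not be controlled by the $F^*_{L^p}$-norm---which is why Corollary~\ref{cor B} stops at $[1,p]\cup[p',\infty)$.
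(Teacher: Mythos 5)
Your proof is correct, and through the critical range $p<q<p'$ it takes a genuinely different route from the paper's. The paper stays at the algebra level: since the left and right actions of $F^*_{L^p}(\Gamma)$ on the $L^q$-bimodule are integrated forms of isometric $\Gamma$-representations on an $L^q$-space, they extend continuously to $F^*_{L^q}(\Gamma)$ by universality; an $\ell^1(\Gamma)$-approximation argument shows the almost central net remains almost central for the $F^*_{L^q}(\Gamma)$-bimodule structure, and property $(\mathrm{T}_{L^q})$ for $F^*_{L^q}(\Gamma)$ (Theorem \ref{thm:(T_E)_for_G_iff_for_FstarE(G)} applied at exponent $q$, combined with Proposition \ref{prop:TE_for_unital_algebras}) then yields central vectors approximating the $\xi_i$, which remain central for $F^*_{L^p}(\Gamma)$. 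You bypass $F^*_{L^q}(\Gamma)$ altogether and rerun the group-to-algebra dictionary with mismatched exponents: your conjugation representation $\pi(\gamma)=L_\gamma R_{\gamma^{-1}}$ is exactly the one in the proof of Theorem \ref{thm:(T_E)_for_G_iff_for_FstarE(G)}, made available here because discreteness puts the elements $\delta_\gamma$ inside the unital algebra itself, so that plain almost centrality already controls $\sup_{\gamma\in Q}\norm{\pi(\gamma)\xi_i-\xi_i}{E}$ over finite $Q\subset\Gamma$ (for non-discrete $G$ one would need strict almost centrality and the multiplier embedding). Both arguments exploit discreteness only through unitality; yours is shorter and more self-contained, invoking BFGM for the group directly, while the paper's exhibits a change-of-exponent permanence of bimodule actions and reuses Theorem \ref{thm:(T_E)_for_G_iff_for_FstarE(G)} as a black box. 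One inessential quibble: your retreat to Corollary \ref{cor B} outside the reflexive gap is unnecessary. The paper's working definition of property $(\mathrm{T}_{\mathcal{E}})$ for groups (Definition \ref{def:(T_E)_superrefl}) is equivalent to the BFGM quotient formulation for an \emph{arbitrary} class $\mathcal{E}$, so the strong approximation $\norm{\xi_i-\eta_i}{E}\rightarrow0$ is available with no complemented decomposition and no reflexivity (in particular for $q=1$), and your conjugation argument covers all $1\leq q<\infty$ uniformly; also, the invariant-complement projection in the superreflexive case is bounded but need not be contractive. Since Corollary \ref{cor B} does cover the ranges you defer to it, this costs you nothing.
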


Continuing with the assumptions that $\Gamma$ is a discrete group and $\mathcal{E}$ is the class  $L^p$-spaces,
we obtain, in addition to the characterization from Theorem \ref{thm A}, a characterization of property $(\mathrm{T}_{L^p})$ in terms of the symmetrized $p$-pseudofunction algebra $F^*_{\lambda_p}(\Gamma)$.

\begin{introtheorem}\label{thm C}
(Theorem \ref{TLp_for_IN_gps})
Let $\Gamma$ be a discrete group and let $1\leq p<\infty$. The following are equivalent:
\begin{enumerate}[(i)]
\item $\Gamma$ has property $(\mathrm{T}_{L^p})$,
\item $F^*_{L^p}(\Gamma)$ has property $(\mathrm{T}_{L^p})$,
\item $F^*_{\lambda_p}(\Gamma)$ has property $(\mathrm{T}_{L^p})$.
\end{enumerate}
\end{introtheorem}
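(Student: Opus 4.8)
The plan is to treat the equivalence (i)$\Leftrightarrow$(ii) as already settled and to concentrate all the work on comparing the full $L^p$-pseudofunction algebra with its regular counterpart. Indeed, (i)$\Leftrightarrow$(ii) is exactly the case $\mathcal{E}=L^p$ of Theorem~\ref{thm A}, so it suffices to prove that (ii) and (iii) are equivalent, or equivalently that $\Gamma$ has property $(\mathrm{T}_{L^p})$ if and only if $F^*_{\lambda_p}(\Gamma)$ does. The guiding principle is that an essential bimodule over either algebra amounts to a commuting pair $(\pi_L,\pi_R)$ of isometric $L^p$-representations of $\Gamma$, and that strict almost centrality (resp. centrality) of a net of vectors translates into almost invariance (resp. invariance) for the associated conjugation representation $g\mapsto\pi_L(g)\pi_R(g^{-1})$ on the same $L^p$-space. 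The only difference between the two algebras is which representations are admissible: over $F^*_{L^p}(\Gamma)$ every isometric $L^p$-representation occurs, whereas over $F^*_{\lambda_p}(\Gamma)$ both actions are constrained to be weakly contained in the regular representation $\lambda_p$.

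One direction is soft. Since the $F^*_{\lambda_p}$-norm is dominated by the $F^*_{L^p}$-norm on $\ell^1(\Gamma)$, there is a canonical contractive homomorphism with dense range $F^*_{L^p}(\Gamma)\to F^*_{\lambda_p}(\Gamma)$, and both algebras carry the approximate unit inherited from $\ell^1(\Gamma)$ (in fact a unit, since $\Gamma$ is discrete). Pulling an essential $F^*_{\lambda_p}(\Gamma)$-bimodule back along this quotient map produces an essential $F^*_{L^p}(\Gamma)$-bimodule on the same underlying $L^p$-space; by density a strictly almost central net for one structure is strictly almost central for the other, and the central vectors coincide. Hence property $(\mathrm{T}_{L^p})$ descends along the quotient, giving (ii)$\Rightarrow$(iii); combined with Theorem~\ref{thm A} this also yields (i)$\Rightarrow$(iii).

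The substance lies in the converse (iii)$\Rightarrow$(i), where one must show that the restricted, regular-representation bimodules already detect every failure of property $(\mathrm{T}_{L^p})$. I would argue contrapositively, starting from an isometric $L^p$-representation $\pi$ of $\Gamma$ on a space $E$ with a net of almost invariant unit vectors but no non-zero invariant vector, and manufacturing a bimodule whose two sides factor through $\lambda_p$ yet which still witnesses the failure of $(\mathrm{T}_{L^p})$ for $F^*_{\lambda_p}(\Gamma)$. The natural device is an $L^p$-analogue of Fell's absorption principle: on the $L^p$-tensor product $E\otimes_p\ell^p(\Gamma)\cong\ell^p(\Gamma;E)$ the left action $g\mapsto\pi(g)\otimes\lambda_p(g)$ is intertwined, via the diagonal map $\xi\otimes\delta_h\mapsto\pi(h)^{-1}\xi\otimes\delta_h$, with $g\mapsto 1\otimes\lambda_p(g)$, so it is weakly contained in $\lambda_p$; taking this together with the commuting right regular action $1\otimes\rho_p(g)$ turns $E\otimes_p\ell^p(\Gamma)$ into an essential $F^*_{\lambda_p}(\Gamma)$-bimodule. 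One then checks that almost invariant unit vectors for $\pi$ yield a strictly almost central net, whereas centrality of a vector forces a non-zero $\pi$-invariant vector, contradicting the choice of $\pi$.

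The main obstacle is compatibility with the symmetrization that defines $F^*_{\lambda_p}(\Gamma)$. The absorption itself is unproblematic on $L^p$: because each $\pi(h)^{-1}$ is an isometry of $E$, the diagonal map is a genuine isometry of $\ell^p(\Gamma;E)$, so no inner product is needed. What requires care is that $F^*_{\lambda_p}(\Gamma)$ is built not from $\lambda_p$ alone but from $\lambda_p$ together with its contragredient on $\ell^{p'}(\Gamma)$; one must therefore ensure that both module actions \emph{and their duals} remain weakly contained in this symmetrized regular representation, and that strict almost centrality survives the tensoring. The second delicate point is verifying that centrality in the constructed bimodule corresponds exactly to $\Gamma$-invariance of $\pi$. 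This is where discreteness -- more generally the IN property, which supplies the tracial unit vector $\delta_e$ and commuting left and right regular representations on one $L^p$-space -- enters: it keeps the construction inside the class $L^p$ and aligns the symmetrized bimodule structure with the conjugation representation of $\Gamma$. Granting these points, the failure of $(\mathrm{T}_{L^p})$ for $\Gamma$ transports to a failure for $F^*_{\lambda_p}(\Gamma)$, which together with the soft direction and Theorem~\ref{thm A} establishes the equivalence of (i), (ii) and (iii).
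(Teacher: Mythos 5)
Your handling of (i)$\Leftrightarrow$(ii) and (ii)$\Rightarrow$(iii) matches the paper (Theorem \ref{thm:(T_E)_for_G_iff_for_FstarE(G)} plus Proposition \ref{prop:TE_preserved_under_hom_w_dense_im}), and your absorption bimodule $\ell^p(\Gamma;E)$ built from the diagonal intertwiner is exactly the paper's construction via its $L^p$-Fell absorption principle (Proposition \ref{prop:LpFellsAbsorption}). The gap is in the logical framing of (iii)$\Rightarrow$(i), and it is twofold. First, you negate the wrong property. Property $(\mathrm{T}_{L^p})$ in this paper is the approximation property of Definition \ref{def:(T_E)_superrefl}: its failure produces an almost invariant net of unit vectors that stays far from the invariant subspace $E^{\pi}$; it does \emph{not} produce a representation with almost invariant vectors and no non-zero invariant vector. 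The latter is the negation of \emph{weak} property $(\mathrm{T}_{L^p})$ (Definition \ref{def:group_weak_(T_E)}). You cannot repair this by splitting off $E^{\pi}$: the complement (or quotient) of $E^{\pi}$ inside an $L^p$-space need only lie in $SL^p$, not in $L^p$, so it is not an admissible bimodule in Definition \ref{def:Strong-(T_E)} for the class $L^p$ (and for $p=1$ there is no complement at all); the class $L^p$ satisfies neither hypothesis of Proposition \ref{prop:on_eq_of_(T_E)_and_weak(T_E)}. Thus your contrapositive can at best yield ``(iii) implies weak $(\mathrm{T}_{L^p})$ for $\Gamma$'', which is not (i).

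Second, the step ``centrality of a vector forces a non-zero $\pi$-invariant vector'' is false. By Lemma \ref{lem:central_vectors_in_Fpstar_bimodule_from_iso_rep}, a central vector $\zeta\in\ell^p(\Gamma;E)$ only satisfies the conjugation-equivariance $\pi(s)\zeta(t)=\zeta(sts^{-1})$, so the values of $\zeta$ forced to be invariant are those at central group elements, e.g.\ $\zeta(e)$; a non-zero central $\zeta$ may vanish there, being supported on non-trivial finite conjugacy classes, and then all one extracts is a finite-dimensional subrepresentation. This is precisely the content of the paper's weak-version result, Theorem \ref{thm:TLp_for_discrete_gps}, which from a non-zero central vector concludes only a finite-dimensional subrepresentation and must then invoke Connes--Weiss and weak mixing (and the larger class $SL^p$) to recover property $(\mathrm{T}_{L^p})$. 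The paper's actual proof of Theorem \ref{TLp_for_IN_gps} avoids both problems by being direct rather than contrapositive: given \emph{any} almost invariant net $\net{\xi}{i}{I}$, form $\zeta_i=\delta_e\otimes\xi_i$, note it is almost $F^*_{\lambda_p}(\Gamma)$-central, hence strictly almost central since the algebra is unital (Proposition \ref{prop:TE_for_unital_algebras}), and apply the \emph{strong} property of $F^*_{\lambda_p}(\Gamma)$ to get central vectors $\eta_i$ with $\norm{\zeta_i-\eta_i}{E}\rightarrow0$; then each $\eta_i(e)$ is $\Gamma$-invariant and $\norm{\xi_i-\eta_i(e)}{L^p(\Omega,\nu)}\leq\norm{\zeta_i-\eta_i}{E}\rightarrow0$, which is exactly Definition \ref{def:(T_E)_superrefl}. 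Note how the norm control on $\eta_i$, not mere non-vanishing, is what makes the evaluation at $e$ produce invariant vectors. Your construction is the right one; rearranged in this direct form it becomes the paper's proof.
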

Theorem \ref{thm C} holds for the non-symmetrized versions of the algebras, as well. The proof relies on an $L^p$-analogue of Fell's absorption principle; see Lemma \ref{prop:LpFellsAbsorption}.

This paper is organized as follows. Chapter 2 contains preliminaries on actions of Banach algebras, multiplier algebras, symmetrized pseudofunction algebras as well as property $(\mathrm{T}_{\mathcal{E}})$ for actions of groups on Banach spaces. 
In chapter 3 we prove Theorem \ref{thm A} and its implications Corollary \ref{cor B} and Corollary \ref{cor C}.
In chapter 4 we prove Theorem \ref{thm C}. Further, we show that weak property $(\mathrm{T}_{SL^p})$ is stronger than property $(\mathrm{T}_{L^p})$ for  discrete groups.

\section{Preliminaries}
\subsection{Actions of Banach algebras on Banach spaces.}
For a Banach algebra $\mathcal{A}$, we denote by $\mathcal{A}^{\mathrm{op}}$ its opposite algebra, i.e., the Banach algebra with the same underlying Banach space as $\mathcal{A}$ but with multiplication in reversed order.

\begin{defn}
Let $\mathcal{A}$ and $\mathcal{B}$ be Banach algebras and $E$ a Banach space. A \emph{left action} of $\mathcal{A}$ on $E$ is a contractive representation of $\mathcal{A}$ on $E$. A \emph{right action} of $\mathcal{A}$ on $E$ is a contractive representation of $\mathcal{A}^{\mathrm{op}}$ on $E$. We say that $E$ is a \emph{left (right) $\mathcal{A}$-module} if it carries a left (right) action of $\mathcal{A}$. Further, $E$ is an $\mathcal{A}$-$\mathcal{B}$-bimodule if it carries a left action $\varphi$ of $\mathcal{A}$ and a right action $\psi$ of $\mathcal{B}$ with commuting ranges. We write
\[
a\cdot\xi\cdot b=\psi(b)\varphi(a)\xi ,\qquad\mbox{for } a\in\mathcal{A},b\in\mathcal{B}\mbox{ and } \xi \in E.\qedhere
\]
An $\mathcal{A}$-$\mathcal{A}$-bimodule is simply referred to as an $\mathcal{A}$-bimodule.
\end{defn}

\begin{defn}
An $\mathcal{A}$-$\mathcal{B}$-bimodule $E$ is said to be \emph{essential} if the span of $\mathcal{A}\cdot E\cdot\mathcal{B}$ is dense in $E$. 
Further, $E$ is said to be \emph{faithful} if, whenever $\xi\in E$ satisfies $a\cdot\xi\cdot b=0$, for all $a\in\mathcal{A}$ and $b\in\mathcal{B}$, then $\xi=0$.
\end{defn}

\begin{rk}
Let $\mathcal{A}$ be a Banach algebra with a bounded approximate unit $(u_i)_i$. Then any essential $\mathcal{A}$-bimodule will necessarily be faithful. Indeed, if $E$ is an essential $\mathcal{A}$-bimodule it follows from Cohen's factorization theorem \cite{Cohen1959Factorization} that $E$ is pseudo-unital, i.e., $E=\mathcal{A}\cdot E\cdot\mathcal{A}$. Suppose $\xi\in E$ and $a,b\in\mathcal{A}$ are such that $ca\cdot\xi\cdot bd=0$, for all $c,d\in\mathcal{A}$, then $a\cdot\xi\cdot b=0$. Then, for each $i$,
\begin{align*}
\norm{a\cdot\xi\cdot b}{}&=\norm{a\cdot\xi\cdot b-u_ia\cdot\xi\cdot b}{}+\norm{u_ia\cdot\xi\cdot b-u_ia\cdot\xi\cdot bu_i}{}\\
&\leq \norm{a-u_ia}{}\norm{\xi\cdot b}{}+\sup_j\norm{u_j}{}\norm{a\cdot \xi}{}\norm{b-bu_i}{}.
\end{align*}
As the right-hand side tends to zero in $i$, we see that $a\cdot\xi\cdot b=0$. Since all elements of $E$ admits a factorization in the form $a\cdot\xi\cdot b$, it follows that $E$ is faithful.

\end{rk}

The Banach $^*$-algebra $L^1(G)$, for a locally compact group $G$, plays an important role among the Banach algebras we consider. Since $L^1(G)$ always carries a bounded approximate unit, it is essential and faithful as a bimodule over itself. The following fact connecting its contractive representation theory with the isometric representation theory of $G$ is folklore:

\begin{prop}\label{prop:isom_rep_of_G_is_1-to-1_contr_rep_of_L1(G)}
Let $G$ be a locally compact group and $E$ a Banach space. There is a 1-1 correspondence between non-degenerate, contractive representations of $L^1(G)$ on $E$ and strongly continuous isometric representations of $G$ on $E$.
\end{prop}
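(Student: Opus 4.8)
This is the Banach-space analogue of the classical bijection between unitary representations of $G$ and non-degenerate $*$-representations of $L^1(G)$, with isometries in place of unitaries; the plan is to exhibit the two constructions explicitly and verify that they are mutually inverse. I would first treat the passage from a representation of $G$ to one of $L^1(G)$ by integration. Given a strongly continuous isometric representation $\pi\colon G\to\Iso(E)$, for $f\in L^1(G)$ and $\xi\in E$ the map $g\mapsto f(g)\pi(g)\xi$ is Bochner integrable (it is the product of the scalar function $f$ with the continuous, bounded path $g\mapsto\pi(g)\xi$, and $\int_G\abs{f(g)}\,\norm{\pi(g)\xi}{E}\,\mathrm{d}g=\norm{f}{1}\norm{\xi}{E}$), so I may set $\varphi(f)\xi=\int_G f(g)\,\pi(g)\xi\,\mathrm{d}g$. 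The same estimate shows $\norm{\varphi(f)}{}\leq\norm{f}{1}$, so $\varphi$ is contractive; an application of Fubini's theorem together with left invariance of Haar measure and the homomorphism property $\pi(gh)=\pi(g)\pi(h)$ gives $\varphi(f*h)=\varphi(f)\varphi(h)$, so $\varphi$ is a representation. For non-degeneracy I would take a bounded approximate unit $(e_i)$ of $L^1(G)$ consisting of normalized nonnegative functions supported on a neighbourhood basis of the identity and use strong continuity of $\pi$ to show $\varphi(e_i)\xi\to\xi$ for every $\xi$.

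Conversely, given a non-degenerate contractive representation $\varphi$ of $L^1(G)$, I would recover $\pi$ through the multiplier algebra. Since $L^1(G)$ has a bounded approximate unit and $\varphi$ is non-degenerate, $\varphi$ extends uniquely to a unital contractive homomorphism $\bar\varphi\colon M(L^1(G))\to B(E)$, as recalled in the preliminaries on multiplier algebras, and $M(L^1(G))$ contains the Dirac masses $\delta_g$, each acting on $L^1(G)$ by the isometric left translation $f\mapsto L_g f$. I would then define $\pi(g)=\bar\varphi(\delta_g)$. Because $\delta_g*\delta_h=\delta_{gh}$ and $\delta_e$ is the unit, $\pi$ is a homomorphism into the invertible operators with $\pi(g)^{-1}=\pi(g^{-1})$; as both $\pi(g)$ and its inverse are contractive, each $\pi(g)$ is an isometry. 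Strong continuity follows from the identity $\pi(g)\varphi(f)\xi=\varphi(L_g f)\xi$: continuity of translation in $L^1$-norm gives $\varphi(L_g f)\xi\to\varphi(f)\xi$ as $g\to e$, and since the vectors $\varphi(f)\xi$ are dense by non-degeneracy while the operators $\pi(g)$ are uniformly bounded, $g\mapsto\pi(g)\eta$ is continuous at $e$ for every $\eta$, hence everywhere.

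Finally I would check that the two assignments are mutually inverse: integrating the representation extracted from $\varphi$ returns $\varphi$ (both sides agree on each vector $\varphi(f)\xi$ after unwinding the definitions), and extracting a representation from an integrated $\varphi$ returns the original $\pi$ (by evaluating $\bar\varphi(\delta_g)$ on vectors $\varphi(f)\xi$ and invoking strong continuity). The main obstacle I anticipate lies in the backward direction: making the extension to $M(L^1(G))$ rigorous and, above all, deriving strong continuity of $\pi$ purely from the algebraic data of $\varphi$, where the interplay between non-degeneracy, uniform boundedness of the operators $\pi(g)$, and $L^1$-continuity of translation must be handled with care. The forward direction is routine once Bochner integrability has been established.
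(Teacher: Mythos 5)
The paper gives you nothing to compare against here: the authors explicitly record this proposition as folklore and supply no proof. Judged on its own merits, your proposal is correct, and it is the standard argument; moreover it meshes well with the machinery the paper sets up (the embedding $G\hookrightarrow\mathrm{M}(L^1(G))$ and Proposition \ref{prop:DC_bimodule_structure_encodes_gp_action}). The forward (integration) direction is routine, as you say. In the backward direction your three key steps are all sound: each $\pi(g)=\bar\varphi(\delta_g)$ is an isometry because it and its inverse $\pi(g^{-1})$ are both contractions; strong continuity follows from $\pi(g)\varphi(f)\xi=\varphi(L_gf)\xi$, norm-continuity of translation on $L^1(G)$, density of $\mathrm{span}\,\varphi(L^1(G))E$, and the uniform bound $\norm{\pi(g)}{}\leq1$; and the two constructions invert each other by exactly the computations you indicate. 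Two points deserve sharpening before this is fully rigorous. First, the extension theorem the paper records (Theorem \ref{thm:extending_hom_to_DC}, quoted from Daws) concerns homomorphisms $\mathcal{A}\rightarrow\mathcal{B}$ \emph{with dense range}, which $\varphi:L^1(G)\rightarrow\mathscr{B}(E)$ is not; what you actually need is the module version of that extension: for a multiplier $(L,R)$ define $\bar\varphi(L,R)\,\varphi(f)\xi=\varphi(L(f))\xi$ on $\mathrm{span}\,\varphi(L^1(G))E$, check well-definedness via $\varphi(L(f))\xi=\lim_i\varphi(L(e_i))\varphi(f)\xi$ for a contractive approximate unit $(e_i)$ (using the multiplier identity $L(e_i*f)=L(e_i)*f$), and extend by the bound $\norm{\varphi(L(e_i))}{}\leq\norm{L}{}$. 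Second, a multiplier of $L^1(G)$ is a \emph{pair} of maps, so "$\delta_g$ acts by left translation" should be stated as: $\delta_g$ is the multiplier $(L_g,\Delta(g^{-1})R_{g^{-1}})$, which is precisely the embedding the paper describes. Both are presentational repairs rather than gaps, and your anticipated obstacle (strong continuity from the algebraic data) is handled correctly by your density-plus-uniform-boundedness argument.
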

The Banach $^*$-algebra $L^1(G)$ is in the following precise sense its own opposite, as may be easily verified:
\begin{prop}\label{prop:tilde_map_stariso_L1}
The map $\widetilde{\square}:L^1(G)\rightarrow L^1(G)^{\mathrm{op}}$ given by
\[
\widetilde{f}(s)=\Delta(s^{-1})f(s^{-1}),\qquad\mbox{for }s\in G, \numberthis\label{eqn:def:tilde-map}
\]
is an isometric $^*$-isomorphism.
\end{prop}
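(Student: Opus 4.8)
The plan is to verify in turn that $\widetilde{\square}$ is a bijective linear isometry, that it is multiplicative as a map into the opposite algebra (equivalently, that it is anti-multiplicative on $L^1(G)$), and that it intertwines the involutions. All of these reduce to two standard facts about the left Haar measure: the modular function $\Delta$ is a continuous homomorphism $G\to(0,\infty)$, so that $\Delta(s^{-1})=\Delta(s)^{-1}$, and the inversion formula $\int_G h(s^{-1})\Delta(s^{-1})\,\mathrm{d}s=\int_G h(s)\,\mathrm{d}s$ holds for every $h\in L^1(G)$.

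The cleanest route, which I would take, is to relate $\widetilde{\square}$ to the two operations whose behaviour under convolution is already well understood: the involution $f^{*}(s)=\Delta(s^{-1})\overline{f(s^{-1})}$ and pointwise complex conjugation $f\mapsto\overline f$. Since $\Delta(s^{-1})$ is real and positive, one reads off directly that $\widetilde f=(\overline f)^{*}=\overline{f^{*}}$. Now complex conjugation is a (conjugate-linear) homomorphism for convolution, $\overline{f*g}=\overline f*\overline g$, while the involution is a conjugate-linear anti-homomorphism, $(f*g)^{*}=g^{*}*f^{*}$. Composing the two yields a genuinely $\mathbb C$-linear map, and
\[
\widetilde{f*g}=(\overline{f*g})^{*}=(\overline f*\overline g)^{*}=(\overline g)^{*}*(\overline f)^{*}=\widetilde g*\widetilde f .
\]
Reading the convolution on the right-hand side as the product in $L^1(G)^{\mathrm{op}}$, this says exactly that $\widetilde{\square}\colon L^1(G)\to L^1(G)^{\mathrm{op}}$ is multiplicative.

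It then remains to handle the isometry, the bijectivity, and the $*$-compatibility, each of which is a short computation. For the isometry I would apply the inversion formula to $h=\abs{f}$, giving $\norm{\widetilde f}{1}=\int_G\Delta(s^{-1})\abs{f(s^{-1})}\,\mathrm{d}s=\norm{f}{1}$. For bijectivity I would check that $\widetilde{\square}$ is an involution on the underlying Banach space: using $\Delta(s^{-1})\Delta(s)=1$ one computes $\widetilde{\widetilde f}(s)=\Delta(s^{-1})\widetilde f(s^{-1})=\Delta(s^{-1})\Delta(s)f(s)=f(s)$, so $\widetilde{\square}$ is its own inverse. Finally, recalling that $L^1(G)^{\mathrm{op}}$ carries the same involution $*$ (an anti-automorphism serves as an involution for the opposite multiplication), the $*$-compatibility amounts to the identity $\widetilde{f^{*}}=(\widetilde f)^{*}$; both sides equal $\overline f$, the left side by the direct computation $\widetilde{f^{*}}(s)=\Delta(s^{-1})f^{*}(s^{-1})=\Delta(s^{-1})\Delta(s)\overline{f(s)}=\overline{f(s)}$ and the right side from $\widetilde f=(\overline f)^{*}$ together with $(g^{*})^{*}=g$.

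There is no serious obstacle here; the statement is folklore and the whole argument is bookkeeping. The only place that demands care is the repeated change of variables $s\mapsto s^{-1}$ and the attendant modular factors: one must consistently use the inversion formula and $\Delta(s^{-1})=\Delta(s)^{-1}$ so that the factors of $\Delta$ cancel exactly as required. Packaging these manipulations through the known operations $f\mapsto\overline f$ and $f\mapsto f^{*}$, rather than expanding the convolution integral by hand, is what keeps the modular-function bookkeeping under control.
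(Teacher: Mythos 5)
Your proof is correct. The paper itself offers no argument for this proposition---it is stated as one that ``may be easily verified''---so there is nothing to diverge from; your verification is exactly the sort of routine check the authors had in mind. The factorization $\widetilde f=\overline{f^{*}}=(\overline f)^{*}$ is a clean way to organize the computation: it makes linearity, anti-multiplicativity, and the $*$-compatibility $\widetilde{f^{*}}=(\widetilde f)^{*}=\overline f$ fall out of the known behaviour of conjugation and the involution under convolution, leaving only the isometry (the inversion formula applied to $\abs{f}$) and the self-inverse property $\widetilde{\widetilde f}=f$ as direct calculations.
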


\subsection{Multipliers of Banach algebras.}
We refer the reader to Section 2 in \cite{Daws2010Multipliers} for details and more results on multipliers of modules. 
\begin{defn}
Let $\mathcal{A}$ be a Banach algebra. A \emph{multiplier} of $\mathcal{A}$ is a pair of maps $(L,R)$ from $\mathcal{A}$ to itself satisfying
\[
    aL(b)=R(a)b,\qquad\mbox{for all }a,b\in\mathcal{A}.
\]
We denote by $\mathrm{M}(\mathcal{A})$ the set of all multipliers on $\mathcal{A}$. This is a linear space with addition and scalar multiplication defined in the obvious way.
\end{defn}

There is a canonical linear map $\mathcal{A}\rightarrow \mathrm{M}(\mathcal{A})$ defined by assigning to each $a_0\in \mathcal{A}$ a pair of maps $L_{a_0}, R_{a_0}:\mathcal{A}\rightarrow \mathcal{A}$ as follows:
\[
L_{a_0}:a\mapsto a_0a\qquad\mbox{and}\qquad R_{a_0}: a\mapsto aa_0,\qquad\mbox{for }a\in\mathcal{A}.
\]
If $\mathcal{A}$ is faithful as a bimodule over itself, this map is injective. If $\mathcal{A}$ is unital, it is surjective. In general, it need neither be injective nor surjective.\\

When $\mathcal{A}$ is faithful as a bimodule over itself, one can show that $\mathrm{M}(\mathcal{A})$ embeds linearly into $\mathscr{B}(\mathcal{A})\oplus_{\infty}\mathscr{B}(\mathcal{A})$. Hence, $\mathrm{M}(\mathcal{A})$ inherits the \emph{strict topology}, i.e., the locally convex topology generated by the family of seminorms $(L,R)\mapsto\norm{L(a)}{}+\norm{R(a)}{}$, where $a\in\mathcal{A}$. The next proposition may be verified with routine arguments:

\begin{prop}\label{prop:DC_is_strictly_closed}
Let $\mathcal{A}$ be a Banach algebra and assume that $\mathcal{A}$ is faithful as a bimodule over itself. Then $\mathrm{M}(\mathcal{A})$ is a strictly closed subspace of $\mathscr{B}(\mathcal{A})\oplus_{\infty}\mathscr{B}(\mathcal{A})$.
\end{prop}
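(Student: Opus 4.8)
The plan is to realize $\mathrm{M}(\mathcal{A})$ as an intersection of strictly closed subsets of $\mathscr{B}(\mathcal{A})\oplus_{\infty}\mathscr{B}(\mathcal{A})$, indexed by pairs $(a,b)\in\mathcal{A}\times\mathcal{A}$. Under the embedding discussed just before the statement, a pair $(L,R)\in\mathscr{B}(\mathcal{A})\oplus_{\infty}\mathscr{B}(\mathcal{A})$ belongs to $\mathrm{M}(\mathcal{A})$ exactly when $aL(b)=R(a)b$ for all $a,b\in\mathcal{A}$; faithfulness of $\mathcal{A}$ over itself is what makes this embedding injective and forces the two components of a multiplier to be genuinely linear and bounded, so I may freely treat $\mathrm{M}(\mathcal{A})$ as the subset of pairs of bounded operators cut out by these identities.

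First I would fix $a,b\in\mathcal{A}$ and consider the map $\Phi_{a,b}\colon\mathscr{B}(\mathcal{A})\oplus_{\infty}\mathscr{B}(\mathcal{A})\to\mathcal{A}$ given by $(L,R)\mapsto aL(b)-R(a)b$, and check that it is continuous from the strict topology to the norm topology of $\mathcal{A}$. Indeed, the evaluation $(L,R)\mapsto L(b)$ is strictly continuous since $\norm{L(b)}{}$ is dominated by the generating seminorm at $b$, and left multiplication $x\mapsto ax$ is norm-continuous on $\mathcal{A}$; composing yields strict-to-norm continuity of $(L,R)\mapsto aL(b)$, and symmetrically for $(L,R)\mapsto R(a)b$. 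Hence the difference $\Phi_{a,b}$ is strictly continuous.

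I would then write $\mathrm{M}(\mathcal{A})=\bigcap_{a,b\in\mathcal{A}}\Phi_{a,b}^{-1}(\{0\})$. Since $\{0\}$ is closed in $\mathcal{A}$ and each $\Phi_{a,b}$ is strictly continuous, every $\Phi_{a,b}^{-1}(\{0\})$ is strictly closed, and an arbitrary intersection of closed sets is closed; therefore $\mathrm{M}(\mathcal{A})$ is strictly closed in $\mathscr{B}(\mathcal{A})\oplus_{\infty}\mathscr{B}(\mathcal{A})$.

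Equivalently, and perhaps more transparently, one can argue directly with nets: if $\netb{(L,R)}{i}{I}$ is a net in $\mathrm{M}(\mathcal{A})$ converging strictly to $(L,R)$, then the generating seminorms give $L_i(b)\to L(b)$ and $R_i(a)\to R(a)$ in norm for all $a,b$; continuity of left and right multiplication by fixed elements yields $aL_i(b)\to aL(b)$ and $R_i(a)b\to R(a)b$, and passing to the limit in the identity $aL_i(b)=R_i(a)b$ gives $aL(b)=R(a)b$, so $(L,R)\in\mathrm{M}(\mathcal{A})$. I expect no genuine obstacle here, in keeping with the paper's label "routine"; the only points meriting care are to work with nets rather than sequences, since the strict topology on $\mathscr{B}(\mathcal{A})\oplus_{\infty}\mathscr{B}(\mathcal{A})$ need not be metrizable, and to keep in mind that strict convergence is merely pointwise rather than uniform, which nonetheless suffices precisely because the multiplier relation is verified one pair $(a,b)$ at a time.
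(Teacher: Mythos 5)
Your proof is correct: since the paper omits the argument as ``routine,'' the intended proof is exactly what you wrote, namely that the defining relation $aL(b)=R(a)b$ is preserved under pointwise (i.e.\ strict) limits of nets in $\mathscr{B}(\mathcal{A})\oplus_{\infty}\mathscr{B}(\mathcal{A})$, with faithfulness used only beforehand to realize $\mathrm{M}(\mathcal{A})$ inside that space as pairs of bounded operators. Both of your formulations (the intersection of kernels of the strictly continuous maps $\Phi_{a,b}$, and the direct net argument) are valid and equivalent, and you correctly note that boundedness of the limit pair is automatic because closedness is tested only against limits already lying in the ambient space.
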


The following lemma concerning strictly compact subsets of $\mathrm{M}(\mathcal{A})$ is Lemma 8 in \cite{BekkaNg2018PropertyTCstar} put in our more general setting. The proof is the same and so we omit it.

\begin{lem}\label{lem:str_cpt_properties}
Let $\mathcal{A}$ be a Banach algebra which is faithful as a bimodule over itself and let $S$ be a non-empty strictly compact subset of $\mathrm{M}(\mathcal{A})$. Then $S$ satisfies the following two properties:
\begin{enumerate}[(i)]
    \item $S$ is norm-bounded,
    \item for any element $a_0\in\mathcal{A}$ and any $\varepsilon>0$, there exist a finite number of elements $x_1,\ldots,x_n\in S$ such that, for every $x\in S$, there is a $k\in\{1,\ldots,n\}$ for which $\norm{x\cdot a_0-x_k\cdot a_0}{\mathcal{A}}<\varepsilon$.
\end{enumerate} 
\end{lem}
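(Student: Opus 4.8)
The plan is to reduce both conclusions to the compactness of the images of $S$ under the evaluation maps of $\mathrm{M}(\mathcal{A})$, and then to invoke two standard facts: the uniform boundedness principle for (i), and the total boundedness of compact subsets of a metric space for (ii). Throughout I use that, since $\mathcal{A}$ is faithful as a bimodule over itself, $\mathrm{M}(\mathcal{A})$ embeds into $\mathscr{B}(\mathcal{A})\oplus_{\infty}\mathscr{B}(\mathcal{A})$ and carries the strict topology generated by the seminorms $(L,R)\mapsto\norm{L(a)}{}+\norm{R(a)}{}$, for $a\in\mathcal{A}$.

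The one observation that does the work is the following. Fix $a\in\mathcal{A}$ and consider the linear maps $\Phi_a,\Psi_a:\mathrm{M}(\mathcal{A})\rightarrow\mathcal{A}$ given by $\Phi_a(L,R)=L(a)$ and $\Psi_a(L,R)=R(a)$. For $x=(L_x,R_x)$ and $y=(L_y,R_y)$, both $\norm{\Phi_a(x)-\Phi_a(y)}{}$ and $\norm{\Psi_a(x)-\Psi_a(y)}{}$ are bounded above by $\norm{L_x(a)-L_y(a)}{}+\norm{R_x(a)-R_y(a)}{}$, the value at $x-y$ of the seminorm indexed by $a$; hence $\Phi_a$ and $\Psi_a$ are continuous for the strict topology. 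Since $S$ is strictly compact, $\Phi_a(S)$ and $\Psi_a(S)$ are compact, and therefore norm-bounded, subsets of $\mathcal{A}$.

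For (i), I would read the previous step as asserting that for each $a\in\mathcal{A}$ one has $\sup_{(L,R)\in S}\norm{L(a)}{}<\infty$ and $\sup_{(L,R)\in S}\norm{R(a)}{}<\infty$. Thus the two families of operators $\{L:(L,R)\in S\}$ and $\{R:(L,R)\in S\}$ on the Banach space $\mathcal{A}$ are pointwise bounded, so by the uniform boundedness principle they are bounded in operator norm, which is precisely norm-boundedness of $S$ in $\mathscr{B}(\mathcal{A})\oplus_{\infty}\mathscr{B}(\mathcal{A})$. For (ii), I would fix $a_0\in\mathcal{A}$ and $\varepsilon>0$ and apply the observation with $a=a_0$: the set $\Phi_{a_0}(S)=\set{x\cdot a_0}{x\in S}$ is compact in $\mathcal{A}$, hence totally bounded, and a finite $\varepsilon$-net with centres in $\Phi_{a_0}(S)$ yields elements $x_1,\ldots,x_n\in S$ such that every $x\cdot a_0$ lies within $\varepsilon$ of some $x_k\cdot a_0$, which is the claim.

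The argument is routine, and there is no real obstacle: completeness of $\mathcal{A}$ makes the uniform boundedness principle available with no extra hypotheses. The only point deserving care is that the evaluation maps are continuous for the \emph{strict} topology rather than the norm topology, so that strict compactness of $S$ — and not merely strict boundedness — is what is being used.
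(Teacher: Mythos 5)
Your proposal is correct and takes essentially the same route as the paper, which omits the proof and refers to Lemma 8 of \cite{BekkaNg2018PropertyTCstar}: that argument is precisely your combination of strict-to-norm continuity of the evaluation maps $(L,R)\mapsto L(a)$, the uniform boundedness principle (legitimate here because faithfulness gives the embedding of $\mathrm{M}(\mathcal{A})$ into $\mathscr{B}(\mathcal{A})\oplus_{\infty}\mathscr{B}(\mathcal{A})$, so the $L$'s and $R$'s are genuinely bounded operators) for (i), and total boundedness of the norm-compact image $\set{x\cdot a_0}{x\in S}$ for (ii).
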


Let $\mathcal{A}$ be a Banach algebra with a bounded approximate unit so that any essential $\mathcal{A}$-bimodule is automatically faithful. Given another Banach algebra $\mathcal{B}$ and a bounded homomorphism $\varphi:\mathcal{A}\rightarrow\mathcal{B}$, the Banach algebra $\mathcal{B}$ becomes in a natural way a bimodule over $\mathcal{A}$. Moreover, if $\varphi$ has dense range, the image of any bounded approximate unit on $\mathcal{A}$ is a bounded approximate unit on $\mathcal{B}$. The following is a special case of Theorem 2.8 in \cite{Daws2010Multipliers}:

\begin{thm}\label{thm:extending_hom_to_DC} 
Let $\mathcal{A}$ be a Banach algebra with a bounded approximate unit, let $\mathcal{B}$ be another Banach algebra and let $\varphi:\mathcal{A}\rightarrow\mathcal{B}$ be a bounded homomorphism with dense range. There is a unique extension $\Phi:\mathrm{M}(\mathcal{A})\rightarrow\mathrm{M}(\mathcal{B})$ and this extension is strictly continuous.
\end{thm}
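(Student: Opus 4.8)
The plan is to build the extension $\Phi$ by hand from a bounded approximate unit, verify that it is a well-defined bounded map into $\mathrm{M}(\mathcal{B})$, and then derive uniqueness and strict continuity from the strict density of $\mathcal{A}$ in its own multiplier algebra. Fix a bounded approximate unit $(e_\lambda)_\lambda$ for $\mathcal{A}$. Since $\mathcal{A}$ has a bounded approximate unit it is faithful over itself, so every multiplier $(L,R)$ is automatically bounded (a routine closed graph argument combined with $e_\lambda a\to a$), and one checks the identities $L(ab)=L(a)b$ and $R(ab)=aR(b)$ by testing against an arbitrary $c\in\mathcal{A}$ and invoking faithfulness. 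As already noted above, $(\varphi(e_\lambda))_\lambda$ is a bounded approximate unit for $\mathcal{B}$ and $\varphi(\mathcal{A})$ is dense in $\mathcal{B}$; these two facts are used repeatedly.

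For a multiplier $m=(L,R)\in\mathrm{M}(\mathcal{A})$ I would define operators $L',R'\colon\mathcal{B}\to\mathcal{B}$ by
$$L'(b)=\lim_\lambda \varphi(L(e_\lambda))\,b,\qquad R'(b)=\lim_\lambda b\,\varphi(R(e_\lambda)).$$
These limits exist first on the dense subspace $\varphi(\mathcal{A})$: for $b=\varphi(a)$ one has $\varphi(L(e_\lambda))\varphi(a)=\varphi\bigl(L(e_\lambda a)\bigr)\to\varphi(L(a))$, using the left-multiplier identity and $e_\lambda a\to a$. The operators $b\mapsto\varphi(L(e_\lambda))b$ are uniformly bounded by $\|\varphi\|\,\|L\|\,\sup_\mu\|e_\mu\|$, so convergence on a dense set forces convergence everywhere, and $L'$ is bounded with the same bound; likewise for $R'$. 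In particular $\Phi(m):=(L',R')$ obeys a norm estimate $\|\Phi(m)\|\le C\|m\|$ with $C=\|\varphi\|\sup_\mu\|e_\mu\|$. That $(L',R')$ is a genuine multiplier follows by checking $b_1L'(b_2)=R'(b_1)b_2$ on $\varphi(\mathcal{A})$, where it reduces to $\varphi(a)\varphi(L(c))=\varphi(aL(c))=\varphi(R(a)c)=\varphi(R(a))\varphi(c)$, and then extending by density and boundedness. Finally $\Phi$ extends $\varphi$: for $m=(L_{a_0},R_{a_0})$ one computes $L'(b)=\lim_\lambda\varphi(a_0e_\lambda)b=\varphi(a_0)b$, so $\Phi$ sends the image of $a_0$ in $\mathrm{M}(\mathcal{A})$ to the image of $\varphi(a_0)$ in $\mathrm{M}(\mathcal{B})$.

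For uniqueness the key point is that $\mathcal{A}$ is strictly dense in $\mathrm{M}(\mathcal{A})$: given $m=(L,R)$, the embedded net $(L(e_\lambda))_\lambda$ converges to $m$ strictly, since $L(a)-L(e_\lambda)a=L(a-e_\lambda a)\to 0$ and, using the multiplier relation $a L(e_\lambda)=R(a)e_\lambda$, also $R(a)-aL(e_\lambda)=R(a)-R(a)e_\lambda\to 0$. Uniqueness is then immediate: any strictly continuous map agreeing with $\varphi$ on the image of $\mathcal{A}$ is forced to take the value $\lim_\lambda\varphi(L(e_\lambda))$ on $m$, which is exactly $\Phi(m)$.

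The last step, and the one I expect to be the main obstacle, is strict continuity of $\Phi$. Here I would invoke the Banach–Steinhaus theorem: if a net $m_i=(L_i,R_i)\to m$ strictly, then for each $a$ the net $(L_i(a))_i$ converges and hence is bounded, so the family $\{L_i\}$ is pointwise bounded and therefore uniformly bounded; thus $M:=\sup_i\|m_i\|<\infty$. Given $b\in\mathcal{B}$ and $\varepsilon>0$, choose $a\in\mathcal{A}$ with $\|b-\varphi(a)\|$ small; then
$$\|L_i'(b)-L'(b)\|\le\|L_i'(b-\varphi(a))\|+\|\varphi(L_i(a)-L(a))\|+\|L'(\varphi(a)-b)\|,$$
where the two outer terms are each at most $CM\|b-\varphi(a)\|$ and the middle term tends to $0$ because $L_i(a)\to L(a)$. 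Hence $L_i'(b)\to L'(b)$, and likewise $R_i'(b)\to R'(b)$, so $\Phi(m_i)\to\Phi(m)$ strictly. The only delicate ingredient is this uniform boundedness of strictly convergent nets; without it the density estimate could not absorb the operator norms of the $L_i'$, which are not controlled by finitely many strict seminorms.
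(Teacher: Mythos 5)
Note first that the paper does not actually prove this statement: it is quoted verbatim from Theorem 2.8 of Daws's memoir, so your attempt can only be compared with the standard argument from the literature. Most of your proof matches that argument and is correct. Boundedness and linearity of multipliers via faithfulness and the closed graph theorem, the definition $L'(b)=\lim_\lambda\varphi(L(e_\lambda))b$ and $R'(b)=\lim_\lambda b\varphi(R(e_\lambda))$ (limits exist on the dense subspace $\varphi(\mathcal{A})$, uniform boundedness of the multiplying net extends them), the multiplier identity checked on $\varphi(\mathcal{A})$, the extension property, and uniqueness via strict density of $\mathcal{A}$ in $\mathrm{M}(\mathcal{A})$ together with Hausdorffness of the strict topology: all of this is sound. (One small loose end in the uniqueness step: you need the embedded net $\varphi(L(e_\lambda))$ to converge strictly to $\Phi(m)$ \emph{in} $\mathrm{M}(\mathcal{B})$, which also requires $b\,\varphi(L(e_\lambda))\to R'(b)$; on the dense range this is the computation $\varphi(a)\varphi(L(e_\lambda))=\varphi(R(a)e_\lambda)\to\varphi(R(a))$, and it extends because this particular net \emph{is} bounded. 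Routine, but it should be said.)

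The strict continuity step, however, contains a genuine gap, exactly at the point you flagged as delicate. Your appeal to Banach--Steinhaus fails because, unlike sequences, convergent nets need not be bounded --- not even eventually bounded with a tail independent of the test vector --- so from $m_i\to m$ strictly you cannot conclude $M=\sup_i\|m_i\|<\infty$, and in fact this conclusion is false. For example, for $\mathcal{A}=c_0$ one has $\mathrm{M}(\mathcal{A})=\ell^\infty$, and every basic strict neighbourhood of $0$, being determined by finitely many $a_1,\dots,a_n\in c_0$ and $\varepsilon>0$, contains $C e_k$ for every $C>0$ once $k$ is large enough; indexing over pairs (basic neighbourhood, integer) one produces a strictly null net with $\|m_i\|\to\infty$. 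With $M=\infty$ your $\varepsilon/3$-estimate collapses. The missing idea is the Cohen--Hewitt factorization theorem, which is what the proof in the literature runs on: since $(\varphi(e_\lambda))_\lambda$ is a bounded approximate unit for $\mathcal{B}$, the algebra $\mathcal{B}$ is an essential Banach $\mathcal{A}$-bimodule via $\varphi$, so every $b\in\mathcal{B}$ factors as $b=\varphi(a)c$ and as $b=c'\varphi(a')$ with $a,a'\in\mathcal{A}$, $c,c'\in\mathcal{B}$. Since multipliers of the faithful algebra $\mathcal{B}$ satisfy $L'(xy)=L'(x)y$ and $R'(xy)=xR'(y)$, this gives $L'(b)=\varphi(L(a))c$ and $R'(b)=c'\varphi(R(a'))$, whence $\|L'(b)\|+\|R'(b)\|\leq\|\varphi\|\left(\|c\|+\|c'\|\right)\left(\|L(a)\|+\|R(a')\|\right)$. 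Thus each strict seminorm of $\Phi(m)$ is dominated by finitely many strict seminorms of $m$, which is honest topological strict continuity, with no boundedness hypothesis at all. Your argument as written does establish strict continuity on norm-bounded subsets of $\mathrm{M}(\mathcal{A})$, which --- by Lemma \ref{lem:str_cpt_properties} --- is all that the paper actually uses in Proposition \ref{prop:TE_preserved_under_hom_w_dense_im}; but it does not prove the theorem as stated.
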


We close this part with some remarks on the multiplier algebra of $L^1(G)$, for a locally compact group $G$.
For $s\in G$, we denote by $L_s$ and $R_s$ the left, respectively, right translation operators on $L^1(G)$. Precisely, for $s,t\in G$ and $f\in L^1(G)$,
\begin{gather*}
  L_sf(t)=f(s^{-1}t), \quad
R_sf(t)=f(ts)  
\end{gather*}
Then $(L_s,\Delta(s^{-1})R_{s^{-1}})$ is a multiplier of $L^1(G)$. This gives rise to a multiplicative embedding $G\hookrightarrow\mathrm{M}(L^1(G))$, and this embedding is continuous when $\mathrm{M}(L^1(G))$ is equipped with the strict topology. Thus, given an essential $L^1(G)$-bimodule $E$, the group $G$, respectively, its opposite $G^{\mathrm{op}}$ act on $E$ via the extension of the bimodule structure to the multiplier algebra. The next proposition, which may be verified with a straight forward computation, shows that these actions agree with the actions we already have from the $L^1(G)$-bimodule structure via Proposition \ref{prop:isom_rep_of_G_is_1-to-1_contr_rep_of_L1(G)}.

\begin{prop}\label{prop:DC_bimodule_structure_encodes_gp_action}
Let $G$ be a locally compact group and let $E$ be an essential $L^1(G)$-bimodule with left action $\varphi$ and right-action $\psi$. For each $\xi\in E$ and $s\in G$,
\[
(L_s,\Delta(s^{-1})R_{s^{-1}})\cdot\xi=\varphi(s)\xi\qquad\mbox{and}\qquad 
\xi\cdot(L_s,\Delta(s^{-1})R_{s^{-1}})=\psi(s)\xi
\]
\end{prop}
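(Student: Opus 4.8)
The plan is to check both equalities on a dense subspace and then invoke boundedness, since every map involved is a bounded operator. Write $m_s=(L_s,\Delta(s^{-1})R_{s^{-1}})$. Because $L^1(G)$ carries a bounded approximate unit and $E$ is essential, the bimodule structure extends uniquely to an action of $\mathrm{M}(L^1(G))$ on $E$ (as in the discussion preceding the statement, using Theorem \ref{thm:extending_hom_to_DC}), with each multiplier acting boundedly. The computational heart is that $L^1(G)$ is a two-sided ideal in $\mathrm{M}(L^1(G))$, so that for $m=(L,R)\in\mathrm{M}(L^1(G))$ and $f\in L^1(G)$ the products taken inside $\mathrm{M}(L^1(G))$ satisfy $mf=L(f)$ and $fm=R(f)$ in $L^1(G)$. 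From this, and from the identifications $f\cdot\eta=\varphi(f)\eta$ and $\eta\cdot f=\psi(f)\eta$, the extended action on the dense subspaces spanned by the $\varphi(f)\eta$ and by the $\psi(f)\eta$ is given by
\[
m\cdot(\varphi(f)\eta)=\varphi(L(f))\eta,\qquad (\psi(f)\eta)\cdot m=\psi(R(f))\eta .
\]

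First I would dispatch the left-hand identity. By the way the correspondence of Proposition \ref{prop:isom_rep_of_G_is_1-to-1_contr_rep_of_L1(G)} is set up, the isometric representation $\varphi$ of $G$ is characterised by $\varphi(s)\varphi(f)=\varphi(L_sf)$ for all $f\in L^1(G)$; this is simply the integrated-form identity $\varphi(s)\int_G f(t)\varphi(t)\,dt=\int_G f(s^{-1}t)\varphi(t)\,dt$, obtained from left-invariance of Haar measure. Since the left component of $m_s$ is $L_s$, the display gives $m_s\cdot(\varphi(f)\eta)=\varphi(L_sf)\eta=\varphi(s)\varphi(f)\eta$. As $\varphi(s)$ and $\xi\mapsto m_s\cdot\xi$ are bounded and agree on the dense span of $\varphi(L^1(G))E$, they agree on all of $E$.

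The right-hand identity runs along identical lines, the only extra care being the opposite-algebra convention and the modular function. Here $\psi$ is a representation of $L^1(G)^{\mathrm{op}}$, so the associated isometric representation of $G^{\mathrm{op}}$ (which I also denote $\psi$) is anti-multiplicative, $\psi(s)\psi(t)=\psi(ts)$, with integrated form $\psi(f)=\int_G f(t)\psi(t)\,dt$. Substituting $u=ts$ in $\int_G f(t)\psi(ts)\,dt$ produces exactly the factor $\Delta(s^{-1})$ and the right translate $R_{s^{-1}}f$, giving the companion relation $\psi(s)\psi(f)=\psi(\Delta(s^{-1})R_{s^{-1}}f)$. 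Since the right component of $m_s$ is precisely $\Delta(s^{-1})R_{s^{-1}}$, the display now yields $(\psi(f)\eta)\cdot m_s=\psi(\Delta(s^{-1})R_{s^{-1}}f)\eta=\psi(s)\psi(f)\eta$, and density together with boundedness again finishes it.

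I expect the only genuinely delicate bookkeeping to be on the right: one must keep straight that the extended right action reads off the right component of the multiplier, that $\psi$ is anti-multiplicative, and that the modular function enters through the right translation. This is ultimately the same calculation that shows $m_s$ is a multiplier in the first place, namely $f*(L_sg)=\Delta(s^{-1})(R_{s^{-1}}f)*g$, so once that identity is in hand the proposition reduces to a density-and-continuity formality.
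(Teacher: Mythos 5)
Your proof is correct. The paper itself gives no argument (it states the result ``may be verified with a straight forward computation''), and your verification --- extending the bimodule structure to $\mathrm{M}(L^1(G))$, checking the identities $\varphi(s)\varphi(f)=\varphi(L_sf)$ and $\psi(s)\psi(f)=\psi(\Delta(s^{-1})R_{s^{-1}}f)$ on the dense spans $\varphi(L^1(G))E$ and $\psi(L^1(G))E$, and concluding by boundedness --- is exactly the routine computation the paper has in mind, with the modular function and the opposite-algebra convention handled correctly.
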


\subsection{Symmetrized pseudofunction algebras} \label{prili:SymPseudo}
Let $G$ be a locally compact group. Given a class of Banach spaces $\mathcal{E}$, denote by $\Rep_{\mathcal{E}}(G)$ the class of strongly continuous isometric representations of $G$ on a Banach space in $\mathcal{E}$. For a subclass $\mathcal{R}$ of $\Rep_{\mathcal{E}}(G)$, we define a seminorm on $L^1(G)$ by setting
\[
\norm{f}{\mathcal{R}}=\sup\set{\norm{\pi(f)}{}}{\pi\in\mathcal{R}}.
\]
Set $I_{\mathcal{R}}=\bigcap_{\pi\in\mathcal{R}}\ker(\pi)$. This is a closed 2-sided ideal in $L^1(G)$, and so, the quotient $L^1(G)/I_{\mathcal{R}}$ inherits the algebra structure from $L^1(G)$. We denote by $F_{\mathcal{R}}(G)$ the completion of $L^1(G)/I_{\mathcal{R}}$ with respect to the norm induced by $\normdot{\mathcal{R}}$. This is a Banach algebra with multiplication extending the convolution product on $L^1(G)$; we refer to it as the \emph{Banach algebra of $\mathcal{R}$-pseudofunctions}. When $\mathcal{R}$ is all of $\Rep_{\mathcal{E}}(G)$, we shall denote by $F_{\mathcal{E}}(G)$ the resulting Banach algebra. When $\mathcal{R}$ consists of only one representation, say $\pi$, we simply write $F_{\pi}(G)$. Accordingly, we refer to these Banach algebras as algebras of \emph{$\mathcal{E}$-pseudofunctions}, respectively, \emph{$\pi$-pseudofunctions}.

Well-known examples of pseudofunction algebras include the universal and the reduced group $C^*$-algebras, $C^*(G)$ and $C^*_r(G)$, respectively. In the notation introduced above, the former is the pseudofunction algebra $F_{\mathcal{H}}(G)$, where $\mathcal{H}$ is the class of complex Hilbert spaces, and the latter is $F_{\lambda}(G)$, where $\lambda$ the left-regular representation of $G$. Further, for $1\leq p<\infty$ and $\lambda_p$ the left-regular representation of $G$ on $L^p(G)$, $F_{\lambda_p}(G)$ is the Banach algebra of $p$-pseudofunctions, which goes back to work of Herz and it is often denoted by $PF_p(G)$. This Banach algebra also appeared in work of Phillips, e.g. \cite{Phillips2013SimplicityCuntz}, where it is denoted by $F^p_r(G)$ to emphasize its connection to the reduced group $C^*$-algebra.

\begin{rk}\label{rk:isom_rep_of_G_is_1-to-1_contr_rep_of_F_R(G)}
It is easy to see that if $\pi$ is an isometric representation of $G$, and $\pi$ lies in the class $\mathcal{R}$, then $\pi$ extends to a non-degenerate contractive representation of $F_{\mathcal{R}}(G)$. That is, $F_{\mathcal{R}}(G)$ is universal for $\mathcal{R}$ in the same way that $C^*(G)$ is universal for all unitary representations of $G$. Conversely, by Proposition \ref{prop:isom_rep_of_G_is_1-to-1_contr_rep_of_L1(G)}, if $\pi$ is a non-degenerate contractive representation of $F_{\mathcal{R}}(G)$ then $\pi$ is the extension of an integrated form of an isometric representation of $G$. However, we are not guaranteed that $\pi$ lies in the class $\mathcal{R}$.
\end{rk}

The involution on $L^1(G)$ need not extend to $F_{\mathcal{R}}(G)$, and so, $F_{\mathcal{R}}(G)$ is in general only a Banach algebra and not necessarily a Banach $^*$-algebra. However, if the class $\mathcal{R}$ is closed under duality, the involution on $L^1(G)$ does extend. Recall that if $\pi$ is an isometric representation of $G$ on a Banach space $E$, its dual representation $\pi^*$ is the isometric representation on the dual Banach space $E^*$ given, for $t\in G$, $\eta\in E^*$ and $x\in E$, by
\[
\left(\pi^*(t)\eta\right)(x)=\eta\left(\pi(t^{-1})x\right).
\]
We say that the class $\mathcal{R}$ is closed under duality if $\pi^*\in\mathcal{R}$ whenever $\pi\in\mathcal{R}$. Proposition \ref{prop:extending_the_involution_to_pseudofunctions} below is proven in a special case in Proposition 4.2 in \cite{SameiWiersma2020Quasi-Hermitian}. The proof in the generality stated here is essentially the same, and so, we omit it.

\begin{prop}\label{prop:extending_the_involution_to_pseudofunctions}
Let $\mathcal{R}$ be a class of continuous isometric representations of $G$ closed under duality. Then the involution on $L^1(G)$ is an isometry with respect to the norm induced by $\mathcal{R}$.
\end{prop}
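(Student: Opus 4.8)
The plan is to prove the statement in the form $\norm{f^*}{\mathcal R}=\norm{f}{\mathcal R}$ for every $f\in L^1(G)$; the extension of the involution to $F_{\mathcal R}(G)$ is then automatic, since a $\norm{\cdot}{\mathcal R}$-isometry of $L^1(G)$ preserves the ideal $I_{\mathcal R}$ and hence descends to the quotient and extends by continuity to the completion. The engine will be a change-of-variables identity that rewrites the integrated form of $\pi$ at $f$ through the \emph{dual} representation $\pi^*$, so that the hypothesis that $\mathcal R$ is closed under duality can be used. Throughout I write $T'\colon E^*\to E^*$ for the Banach-space transpose of a bounded operator $T\colon E\to E$, and recall that $\norm{T'}{}=\norm{T}{}$.

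First I would establish, for every $\pi\in\Rep_{\mathcal E}(G)$ and $f\in L^1(G)$, the identity $\pi(f)'=\pi^*(\widetilde f)$, where $\widetilde{\square}$ is the map of Proposition \ref{prop:tilde_map_stariso_L1}. This is a one-line computation paired against $x\in E$ and $\eta\in E^*$: unfolding $(\pi(f)'\eta)(x)=\int_G f(s)\,\eta(\pi(s)x)\,\mathrm ds$ on one side and $(\pi^*(\widetilde f)\eta)(x)=\int_G \Delta(s^{-1})f(s^{-1})\,\eta(\pi(s^{-1})x)\,\mathrm ds$ on the other, the substitution $s\mapsto s^{-1}$ together with the left-Haar identity $\int_G\phi(s^{-1})\Delta(s^{-1})\,\mathrm ds=\int_G\phi(s)\,\mathrm ds$ converts the second integral into the first. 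Taking operator norms yields $\norm{\pi(f)}{}=\norm{\pi^*(\widetilde f)}{}$.

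With this identity in hand I would run the supremum argument. Since $\pi^*\in\mathcal R$ for every $\pi\in\mathcal R$, taking suprema gives $\norm{f}{\mathcal R}=\sup_\pi\norm{\pi^*(\widetilde f)}{}\le\norm{\widetilde f}{\mathcal R}$; replacing $f$ by $\widetilde f$ and using $\widetilde{\widetilde f}=f$ gives the reverse inequality, so $\widetilde{\square}$ is a $\norm{\cdot}{\mathcal R}$-isometry. I would then relate this to the genuine involution through the pointwise identity $f^*=\overline{\widetilde f}$ (equivalently $\widetilde f=\overline{f^{\,*}}$), which is immediate from the defining formulas.

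The main obstacle, and the point requiring the most care, is the complex conjugation built into the involution: the substitution above produces $\pi^*$ evaluated at $\widetilde f$, whereas $f^*$ differs from $\widetilde f$ by a pointwise conjugation, and this gap is \emph{not} closed by duality-closure alone. To handle it I would use that $\norm{\cdot}{\mathcal R}$ is also invariant under $f\mapsto\overline f$. This reflects that conjugating an isometric representation $\pi$ on $E$ yields an isometric representation $\overline\pi$ on the conjugate space $\overline E$ with $\overline\pi(f)=\pi(\overline f)$, together with the fact that the classes of interest (Hilbert spaces, and $L^p$-spaces via conjugation of functions) are stable under passing to conjugate spaces, so that $\mathcal R$ is stable under $\pi\mapsto\overline\pi$. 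Combining conjugation-invariance with the isometry of $\widetilde{\square}$ then gives $\norm{f^*}{\mathcal R}=\norm{\overline{\widetilde f}}{\mathcal R}=\norm{\widetilde f}{\mathcal R}=\norm{f}{\mathcal R}$. A secondary bookkeeping point is to confirm that $\pi\mapsto\pi^*$ behaves as an involutive self-map of $\mathcal R$ in the relevant sense—using that $\pi^{**}$ restricts isometrically to $\pi$ under the canonical embedding $E\hookrightarrow E^{**}$—so that the two supremum inequalities are genuinely mutual.
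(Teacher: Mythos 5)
Your core computation is correct and is, in essence, the argument the paper itself appeals to (the paper writes no proof, deferring to Prop.\ 4.2 of Samei--Wiersma): the transpose identity $\pi(f)'=\pi^*(\widetilde{f})$ gives $\norm{\pi(f)}{}=\norm{\pi^*(\widetilde{f})}{}$, and duality-closure together with $\widetilde{\widetilde{f}}=f$ makes $f\mapsto\widetilde{f}$ a $\normdot{\mathcal{R}}$-isometry. Your isolation of the conjugation step is also exactly right, and it is worth recording how serious that step is. Since $\widetilde{f^*}=\overline{f}$ and $\widetilde{\overline{f}}=f^*$, the same transpose identity shows that for \emph{any} duality-closed $\mathcal{R}$ one has $\norm{f^*}{\mathcal{R}}=\norm{\overline{f}}{\mathcal{R}}$ on the nose; so the proposition is equivalent to conjugation-invariance of $\normdot{\mathcal{R}}$, and that is not a consequence of duality-closure. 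Your route through conjugation is therefore not merely one possible fix; it is forced.

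In fact, with the paper's (linear, non-conjugated) definition of $\pi^*$, the statement is false as literally written, so the extra hypothesis you invoke cannot be dispensed with. Take $\Gamma=C_7\rtimes C_3$ (order $21$), $\chi$ a nontrivial character of $C_7$, $p\neq 2$, and $\sigma=\mathrm{Ind}_{C_7}^{\Gamma}\chi$ in its monomial realization on $\ell^p_3$. One checks that $\sigma^*$ is the entrywise-conjugate representation $\mathrm{Ind}_{C_7}^{\Gamma}\overline{\chi}$ acting on $\ell^{p'}_3$, which is inequivalent to $\sigma$ (the $C_3$-orbits of $\chi$ and $\overline{\chi}$ are disjoint), so $\mathcal{R}=\{\sigma,\sigma^*\}$ is closed under duality and $f\mapsto(\sigma(f),\sigma^*(f))$ maps $\C[\Gamma]$ onto $M_3(\C)\oplus M_3(\C)$. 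Because each $\sigma(g)$ is a unitary matrix, $\sigma(f^*)=\sigma(f)^{\dagger}$, and $\norm{M^{\dagger}}{\ell^p\to\ell^p}=\norm{M}{\ell^{p'}\to\ell^{p'}}$ for any matrix $M$; choosing $f$ with $\sigma^*(f)=0$ and $\sigma(f)$ the matrix with first row $(1,1,0)$ and zeros elsewhere gives $\norm{f}{\mathcal{R}}=2^{1/p'}$ but $\norm{f^*}{\mathcal{R}}=2^{1/p}$. So what you add---stability of $\mathcal{R}$ under $\pi\mapsto\overline{\pi}$, equivalently reading ``dual'' as the conjugate (Hermitian) dual $\overline{\pi^*}$---is genuinely necessary, and it does hold for every class the paper actually uses: for $\Rep_{L^p}(G)$ because the class of spaces is conjugation-stable, and for $\{\lambda_p\}$ because $\lambda_p$ commutes with pointwise conjugation. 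The one change you should make is to state this as an explicit hypothesis (or verify it for the class at hand) rather than appeal to ``the classes of interest'': as written, your argument proves a corrected version of the proposition rather than the literal statement---but the defect this exposes lies in the statement, not in your proof.
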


For a class $\mathcal{R}$ of strongly continuous isometric representations of $G$, denote by $\mathcal{R}^*$ the smallest class of strongly continuous isometric representations of $G$ which is closed under duality and which contains $\mathcal{R}$. We denote by $F^*_{\mathcal{R}}(G)$ the completion of $L^1(G)/I_{\mathcal{R}^*}$ with respect to the norm
\[
\norm{f}{F^*_{\mathcal{R}}(G)}=\sup\set{\norm{\pi(f)}{}}{\pi\in\mathcal{R}^*}
\]
By Proposition \ref{prop:extending_the_involution_to_pseudofunctions}, $F^*_{\mathcal{R}}(G)$ is a Banach $^*$-algebra, and we shall refer to it as the \emph{symmetrized Banach $^*$-algebra of $\mathcal{R}$-pseudofunctions}. As in the non-symmetrized setting, when $\mathcal{R}$ is all of $\Rep_{\mathcal{E}}(G)$ or when $\mathcal{R}$ consists of a single representation $\pi$, we shall write $F^*_{\mathcal{E}}(G)$, respectively, $F^*_{\pi}(G)$, and we refer to these accordingly.

\begin{rk}
Let $\mathcal{E}_{\mathrm{ref}}$ be the class of all reflexive Banach spaces. For a subclass $\mathcal{E}\subset\mathcal{E}_{\mathrm{ref}}$, denote by $\mathcal{E}'$ the class consisting of the Banach spaces which are dual to the Banach spaces in $\mathcal{E}$. Let $\mathcal{R}$ be a subclass of $\Rep_{\mathcal{E}}(G)$ and denote by $\mathcal{R}'$ the subclass of $\Rep_{\mathcal{E}'}(G)$ consisting of representations which are dual to the representations in $\mathcal{R}$. Then $\mathcal{R}^*=\mathcal{R}\cup\mathcal{R}'$. In this case, a straight forward computation gives
\[
\norm{f}{F^*_{\mathcal{R}}(G)}=\max\left\{\norm{f}{F_{\mathcal{R}}(G)},\normb{\widetilde{f}}{F_{\mathcal{R}}(G)}\right\}
\]
\end{rk}

\begin{rk}
Similar to $L^1(G)$, $F^*_{\mathcal{R}}(G)$ is self-opposite via the map defined in equation (\ref{eqn:def:tilde-map}). This need not be true for general pseudofunction algebras.
\end{rk}

\subsection{Property $(\rm T)$ for groups acting on Banach spaces.}
For a Banach space $E$, denote by $\Iso(E)$ the group of surjective linear isometries on $E$. A strongly continuous isometric representation of a locally compact group $G$ on a Banach space $E$ is a strongly continuous group homomorphism $\pi:G\rightarrow\Iso(E)$. Given such a representation $(\pi,E)$, we denote by $E^{\pi}$ the subspace of $G$-invariant vectors. In \cite{BaderFurmanGelanderMonod}, Bader, Furman, Gelander and Monod define property $(\mathrm{T}_{\mathcal{E}})$ as follows:
\begin{defn}\label{def:(T_E)_BFGM} Definition 1.1 in \cite{BaderFurmanGelanderMonod}
    Let $\mathcal{E}$ be a class of Banach spaces. A locally compact group $G$ has property $(\mathrm{T}_{\mathcal{E}})$ if, for any continuous isometric representation $(\pi,E)$ with $E$ in the class $\mathcal{E}$, the quotient representation $\pi'\colon G \rightarrow \Iso(E/{E}^{\pi})$ does not have almost $G$-invariant vectors. If $\mathcal{E}$ consists of a single Banach space $E$, we write $(\mathrm{T}_{E})$ instead of $(\mathrm{T}_{\mathcal{E}})$.
\end{defn}
\begin{rk} \label{rk: T implies TLp}
    We recall from Theorem A in \cite{BaderFurmanGelanderMonod} that, for a second countable locally compact group $G$, Kazhdan's property $(\mathrm{T})$ coincides with property $(\mathrm{T}_{L^p(\mu)})$, for any $\sigma$-finite measure $\mu$ and any $1\leq p < \infty$. 
\end{rk}

We shall use the following alternative definition of property $(\mathrm{T}_{\mathcal{E}})$, which is equivalent to the definition above by Lemma 18 in \cite{Tanaka2017PropertyT}. There, the lemma is stated for second countable locally compact groups, but the additional assumption that the group is second countable can be dropped.

\begin{defn}\label{def:(T_E)_superrefl}
Let $\mathcal{E}$ be a class of Banach spaces. A locally compact group $G$ has \emph{property $(\mathrm{T}_{\mathcal{E}})$} if, whenever $(\pi,E)$ is a strongly continuous isometric representation of $G$ with $E$ in the class $\mathcal{E}$ admitting a net of almost invariant unit vectors $\net{\xi}{i}{I}$, there exists a net of $G$-invariant vectors $\net{\eta}{i}{I}$ such that $\norm{\xi_i-\eta_i}{E}\rightarrow0$. 
\end{defn}

It may be tempting to define the Banach space version of property $(\mathrm{T})$ for groups parallel to the often used definition of Kazhdan's property $(\mathrm{T})$ which only requires the existence of a non-zero invariant vector. 
A priori, this is a weaker property. 

\begin{defn}\label{def:group_weak_(T_E)}
Let $\mathcal{E}$ be a class of Banach spaces. A locally compact group $G$ has \emph{weak property $(\mathrm {T}_{\mathcal{E}})$} if any strongly continuous isometric representation $(\pi,E)$ with $E\in\mathcal{E}$ admitting almost invariant vectors has a non-zero $G$-invariant vector.
\end{defn}

It is well-known that property $(\mathrm{T}_{\mathcal{H}})$ is equivalent to weak property $(\mathrm{T}_{\mathcal{H}})$ when $\mathcal{H}$ is the class of complex Hilbert spaces, in which we recover Kazhdan's property $(\rm T)$. A bit more generally, Proposition \ref{prop:on_eq_of_(T_E)_and_weak(T_E)} gives two sufficient conditions on the class $\mathcal{E}$ for the equivalence of property $(\mathrm{T}_{\mathcal{E}})$ and weak property $(\mathrm{T}_{\mathcal{E}})$. This may be of independent interest. The conditions are well-known to experts, but to our knowledge, they do not appear explicitly in the literature. 
\begin{prop}\label{prop:on_eq_of_(T_E)_and_weak(T_E)}
For any second countable locally compact group $G$ and any class of Banach spaces $\mathcal{E}$, property $(\mathrm{T}_{\mathcal{E}})$ implies weak property $(\mathrm{T}_{\mathcal{E}})$. The converse is true if $\mathcal{E}$ satisfies either one of the following properties:
\begin{enumerate}[(i)]
    \item $\mathcal{E}$ is stable under quotients,
    \item $\mathcal{E}$ is a class of superreflexive Banach spaces stable under taking complemented subspaces.
\end{enumerate}
\end{prop}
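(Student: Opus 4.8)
First I would show that property $(\mathrm{T}_{\mathcal{E}})$ implies weak property $(\mathrm{T}_{\mathcal{E}})$. Suppose $(\pi,E)$ is a continuous isometric representation with $E\in\mathcal{E}$ admitting almost invariant vectors, i.e.\ a net of unit vectors $\net{\xi}{i}{I}$ with $\norm{\pi(s)\xi_i-\xi_i}{E}\to0$ uniformly on compacta. By Definition \ref{def:(T_E)_superrefl}, property $(\mathrm{T}_{\mathcal{E}})$ yields a net $\net{\eta}{i}{I}$ of $G$-invariant vectors with $\norm{\xi_i-\eta_i}{E}\to0$. Since each $\xi_i$ is a unit vector, $\norm{\eta_i}{E}\to1$, so for large $i$ the invariant vector $\eta_i$ is nonzero. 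This gives the required nonzero $G$-invariant vector, establishing weak property $(\mathrm{T}_{\mathcal{E}})$.

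For the converse, assume weak property $(\mathrm{T}_{\mathcal{E}})$ holds and let $(\pi,E)$ be a representation with almost invariant unit vectors $\net{\xi}{i}{I}$; I must produce invariant vectors $\net{\eta}{i}{I}$ with $\norm{\xi_i-\eta_i}{E}\to0$. The natural strategy is to pass to the quotient representation $\pi'$ on $E/E^{\pi}$ and show it has \emph{no} almost invariant vectors, which by Definition \ref{def:(T_E)_BFGM} is exactly property $(\mathrm{T}_{\mathcal{E}})$; the standard argument then recovers the approximating net by taking $\eta_i$ to be (approximate) lifts of the zero class, or equivalently by showing that the distance from $\xi_i$ to $E^{\pi}$ tends to $0$. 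Under hypothesis (i), $\mathcal{E}$ is stable under quotients, so $E/E^{\pi}\in\mathcal{E}$ and I can apply weak property $(\mathrm{T}_{\mathcal{E}})$ directly to $\pi'$: if $\pi'$ had almost invariant vectors, weak $(\mathrm{T}_{\mathcal{E}})$ would give a nonzero $\pi'$-invariant vector in $E/E^{\pi}$, but a standard averaging/mean-value argument shows the quotient by the invariants has no nonzero invariant vector, a contradiction. Hence $\pi'$ has no almost invariant vectors, giving property $(\mathrm{T}_{\mathcal{E}})$.

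Under hypothesis (ii), the quotient $E/E^{\pi}$ need not lie in $\mathcal{E}$, so instead I would use superreflexivity to split $E$. For a superreflexive space with an isometric $G$-action, the invariant subspace $E^{\pi}$ admits a canonical $G$-equivariant complement (e.g.\ via the Alaoui--Peck/Ryll-Nardzewski fixed-point machinery, or the contractive projection onto invariants obtained from the mean of the orbit in the superreflexive setting), realizing $E/E^{\pi}$ isometrically as a complemented $G$-subrepresentation $F\subseteq E$. By stability under complemented subspaces, $F\in\mathcal{E}$, and $F$ carries no nonzero invariant vectors by construction, so I can again invoke weak property $(\mathrm{T}_{\mathcal{E}})$ on $F$ to conclude it has no almost invariant vectors; transporting this back shows $\pi'$ has none either.

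\textbf{Main obstacle.} The crux in both cases is producing the $G$-invariant complement to $E^{\pi}$ and verifying that the quotient (or complement) representation genuinely has \emph{no} invariant vectors while remaining in the class $\mathcal{E}$. Under (i) this is cheap once quotient-stability is granted, but the nontrivial analytic input is the averaging argument showing $(E/E^{\pi})^{\pi'}=0$. Under (ii) the hard part is the construction of the equivariant projection onto $E^{\pi}$: one needs the superreflexivity of $E$ to guarantee that orbit-averages converge (or that a fixed-point theorem applies) so that the complement is both $G$-invariant and complemented, and hence lands in $\mathcal{E}$. I expect this equivariant-splitting step to be where the superreflexivity hypothesis is essential and where the care in the proof is concentrated.
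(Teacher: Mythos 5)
Your proof is correct and takes essentially the same approach as the paper: under (i) you apply weak property $(\mathrm{T}_{\mathcal{E}})$ to the quotient $E/E^{\pi}$, and under (ii) to a $G$-invariant complement of $E^{\pi}$ supplied by superreflexivity (this is exactly \cite[Proposition 2.6]{BaderFurmanGelanderMonod}, which the paper cites), in both cases contradicting the absence of non-zero invariant vectors there. Two minor corrections: the complement is only boundedly $G$-equivariantly isomorphic to $E/E^{\pi}$, not isometric to it (boundedness suffices to transport almost invariant and invariant vectors), and the vanishing $(E/E^{\pi})^{\pi'}=0$ follows not from an averaging argument but from the observation that an invariant coset $v+E^{\pi}$ would make $g\mapsto\pi(g)v-v$ a bounded homomorphism into $(E^{\pi},+)$, which must be identically zero.
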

\begin{proof}
Assume $G$ does not have property $(\mathrm{T}_{\mathcal{E}})$. We can then find a continuous isometric representation $(\pi,E)$ of $G$ with $E$ in $\mathcal{E}$ such that the quotient $E/E^{\pi}$ admits a net of almost invariant vectors. However, $E/E^{\pi}$, by construction,  has no non-zero $G$-invariant vectors. If there is an isometric representation of $G$ on a space $F$ in $\mathcal{E}$ and a bounded equivariant isomorphism from $F$ to $E/E^{\pi}$, it follows that $G$ does not have property $(\mathrm{T}_{\mathcal{E}})$. This is trivially the case if $\mathcal{E}$ is stable under quotients. If $\mathcal{E}$ consists of superreflexive Banach spaces then $E^{\pi}$ is a complemented subspace and its complement is isomorphic to the quotient $E/E^{\pi}$ (see, e.g., Proposition 2.6 in \cite{BaderFurmanGelanderMonod}). If $\mathcal{E}$ furthermore is stable under taking complemented subspaces, we have a contraction from a space in $\mathcal{E}$ to the quotient $E/E^{\pi}$. Hence, if the class $\mathcal{E}$ satisfies either of the conditions (i) or (ii), we see that if $G$ does not have property $(\mathrm{T}_{\mathcal{E}})$ it does not have weak property $(\mathrm{T}_{\mathcal{E}})$ either. 
\end{proof}

\section{Property $\mathrm{(T_{\mathcal{E}})}$ for Banach algebras}
In this section we define property $\mathrm{(T_{\mathcal E})}$, as well as a weaker version of it, for a Banach algebra acting on a family of Banach spaces $\mathcal E$. Our definitions extend that of Bekka-Ng  in \cite{BekkaNg2018PropertyTCstar} for (not necessarily unital) $\mathrm C ^*$-algebras to a Banach algebraic setting. 

The main result in this section, Theorem \ref{thm:(T_E)_for_G_iff_for_FstarE(G)}, relates property $(\mathrm{T}_{\mathcal E})$ of Bader \textit{et al} for locally compact groups to that of its symmetrized Banach $^*$-algebra of $\mathcal {E}$-pseudofunctions. 
Denoting by $L^p$ the class of all $L^p$-spaces on $\sigma$-finite measure spaces, we show that property $(\mathrm{T}_{L^p})$ for $F^*_{L^ p}(G)$ is independent on the parameter $p$. 
Finally, under the assumption that $G$ has Kazhdan's property $(\rm T)$, we obtain property $(\mathrm{T}_{L^q})$ for $F^*_{L^ p}(G)$.\\

Let $\mathcal{A}$ be a Banach algebra and $E$ an $\mathcal{A}$-bimodule. We say that $\xi\in E$ is \emph{$\mathcal{A}$-central} if, for all $a\in\mathcal{A}$, $a\cdot\xi=\xi\cdot a$.
The set of all such elements constitute a closed subspace of $E$, which we denote by $E^{\mathcal{A}}$.
A net $\net{\xi}{i}{I}$ in $E$ is said to be \emph{almost $\mathcal{A}$-central} if, for every finite subset $F\subset\mathcal{A}$ and every $\varepsilon>0$, there is an index $i_0\in I$ such that, for all $i\succcurlyeq i_0$,
\[
\sup_{a\in F}\norm{a\cdot\xi_i-\xi_i\cdot a}{E}<\varepsilon.
\]
A net $\net{\xi}{i}{I}$ in $E$ is said to be \emph{strictly almost $\mathcal{A}$-central} if, for every strictly compact subset $S\subset\mathrm{M}(\mathcal{A})$ and every $\varepsilon>0$, there is an index $i_0\in I$ such that, for all $i\succcurlyeq i_0$,
\[
\sup_{x\in S}\norm{x\cdot\xi_i-\xi_i\cdot x}{E}<\varepsilon.
\]

We can now state our two main definitions:

\begin{defn}\label{def:Strong-(T_E)}
Let $\mathcal{A}$ be a Banach algebra with a bounded approximate unit and let $\mathcal{E}$ be a class of Banach spaces. We say that $\mathcal{A}$ has \emph{property $(\mathrm{T}_{\mathcal{E}})$} if, whenever $E\in\mathcal{E}$ is an essential $\mathcal{A}$-bimodule admitting a net of strictly almost $\mathcal{A}$-central unit vectors $\net{\xi}{i}{I}$, then there exists a net $\net{\eta}{i}{I}$ of $\mathcal{A}$-central vectors such that $\norm{\xi_i-\eta_i}{E}\rightarrow 0$.
\end{defn}
\begin{defn}\label{def:weak_(T_E)}
Let $\mathcal{A}$ be a Banach algebra with a bounded approximate unit and let $\mathcal{E}$ be a class of Banach spaces. We say that $\mathcal{A}$ has \emph{weak property $(\mathrm{T}_{\mathcal{E}})$} if, whenever $E\in \mathcal{E}$ is an essential $\mathcal{A}$-bimodule admitting a net of strictly almost $\mathcal{A}$-central unit vectors, then $E$ contains a non-zero $\mathcal{A}$-central vector.
\end{defn}
It is immediately clear that property $(\mathrm{T}_{\mathcal{E}})$ implies weak property $(\mathrm{T}_{\mathcal{E}})$.
\begin{rk}
We have chosen to restrict the definitions of the two versions of property $(\mathrm{T}_{\mathcal{E}})$ to Banach algebras possessing a bounded approximate unit. This includes, in particular, all the pseudofunction algebras. The definitions, however, are sensible for any Banach algebra which is faithful as a bimodule over itself.
\end{rk}

\begin{rk}
When $\mathcal{A}$ is a $C^*$-algebra and $\mathcal{H}$ is the class of complex Hilbert spaces, our property $(\mathrm{T}_{\mathcal{H}})$ for $\mathcal{A}$ recovers the stronger version of property $(\rm T)$ of Bekka and Ng while our weak property $(\mathrm{T}_{\mathcal{H}})$ recovers the weaker version of their property $(\rm T)$. Indeed, the assumption that the bimodules are essential forces the extension of the bimodule structure to the multiplier algebra to be unital, and unital contractive algebra homomorphisms between $C^*$-algebras are necessarily $^*$-preserving. Hence, we shall refer to (weak) property $(\mathrm{T}_{\mathcal{H}})$ simply as (weak) property $(\rm T)$.
\end{rk}
\begin{warning} \label{warning- terminalogy}
Our terminology differs from that of Bekka and Ng for $C^*$-algebras: While Bekka and Ng use the terms \emph{strong property $(\rm T)$} and \emph{property $(\rm T)$} for the stronger, respectively, weaker version, we prefer the terms \emph{property $(\rm T)$} and \emph{weak property $(\rm T)$}. In particular, \emph{property $(\rm T)$} is the weaker version for them and the stronger version for us. As we shall see in a moment, our terminology is better aligned with the terminology on the group level. Moreover, we avoid confusion with the established notion of strong property $(\rm T)$ for groups.
\end{warning}

\begin{rk}\label{rk:Strong-(T_E)_when_subsp_is_compl}
If $E^{\mathcal{A}}$ is a complemented subspace of $E$ and $P$ is the projection onto $E^{\mathcal{A}}$ along its complement, we may take $\eta_i=P\xi_i$ in the definition of property $(\rm{T}_{\mathcal{E}})$.
\end{rk}

In his definition of property $(\rm T)$ for a unital $C^*$-algebra $\mathcal{A}$ (see Definition 6 in \cite{Bekka2005PropertyTCstar}), Bekka considered $\mathcal{A}$-bimodules admitting a net of almost $\mathcal{A}$-central unit vectors rather than strictly almost $\mathcal{A}$-central unit vectors. When $\mathcal{A}$ is unital, his definition and that of Ng coincides (see Proposition 2.5(b) in \cite{Ng2013PropertyTCstar}). In the more general setting of property $(\mathrm{T}_{\mathcal{E}})$ for Banach algebras, the same phenomenon happens.

\begin{prop}\label{prop:TE_for_unital_algebras}
Let $\mathcal{A}$ be a Banach algebra with a bounded approximate unit and let $\mathcal{E}$ be a class of Banach spaces. Assume $\mathcal{A}$ satisfies the following property: whenever $E\in\mathcal{E}$ is an essential $\mathcal{A}$-bimodule admitting a net $\net{\xi}{i}{I}$ of almost $\mathcal{A}$-central unit vectors then $E$ contains a net $\net{\eta}{i}{I}$ of central vectors such that $\norm{\xi_i-\eta_i}{E}\rightarrow0$. Then $\mathcal{A}$ has property $(\mathrm{T}_{\mathcal{E}})$. The converse is true if $\mathcal{A}$ is unital.
\end{prop}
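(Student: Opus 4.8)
The plan is to compare the two notions of approximate centrality that appear, on the one hand, in the hypothesis and, on the other, in the definition of property $(\mathrm{T}_{\mathcal{E}})$: namely \emph{almost $\mathcal{A}$-centrality}, tested against finite subsets of $\mathcal{A}$, versus \emph{strictly almost $\mathcal{A}$-centrality}, tested against strictly compact subsets of $\mathrm{M}(\mathcal{A})$. The forward implication will follow once we observe that every strictly almost $\mathcal{A}$-central net is automatically almost $\mathcal{A}$-central, while the converse will rest on the fact that, for unital $\mathcal{A}$, the two notions coincide.

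For the forward direction, I would first record that the canonical embedding $\mathcal{A}\hookrightarrow\mathrm{M}(\mathcal{A})$, $a\mapsto(L_a,R_a)$, is compatible with the extended bimodule structure, so that $(L_a,R_a)\cdot\xi=a\cdot\xi$ and $\xi\cdot(L_a,R_a)=\xi\cdot a$ for all $\xi\in E$; this is the analogue of Proposition \ref{prop:DC_bimodule_structure_encodes_gp_action}, verified by non-degeneracy of the two actions. Since any finite subset of $\mathcal{A}$ maps to a finite — hence strictly compact — subset of $\mathrm{M}(\mathcal{A})$, testing strict almost centrality against such sets shows that every strictly almost $\mathcal{A}$-central net of unit vectors is in particular almost $\mathcal{A}$-central. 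Consequently, if $E\in\mathcal{E}$ is an essential $\mathcal{A}$-bimodule carrying a strictly almost $\mathcal{A}$-central net of unit vectors, the assumed property applies to it and produces an approximating net of central vectors; this is exactly property $(\mathrm{T}_{\mathcal{E}})$.

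For the converse, assume $\mathcal{A}$ is unital. The key computation is that $\mathrm{M}(\mathcal{A})$ then coincides with $\mathcal{A}$, since every multiplier $(L,R)$ equals $(L_c,R_c)$ for $c=L(1)$, and that evaluating the strict seminorm at the unit, $(L,R)\mapsto\norm{L(1)}{}+\norm{R(1)}{}=2\norm{c}{}$, shows the strict topology agrees with the norm topology. Hence strictly compact subsets of $\mathrm{M}(\mathcal{A})$ are precisely norm-compact subsets of $\mathcal{A}$. Given an almost $\mathcal{A}$-central net of unit vectors and a norm-compact $S\subset\mathcal{A}$, I would cover $S$ by finitely many $\tfrac{\varepsilon}{3}$-balls centred at points $a_1,\dots,a_n\in S$, apply almost centrality to the finite set $\{a_1,\dots,a_n\}$, and use contractivity of the two actions together with $\norm{\xi_i}{E}=1$ to run a three-$\varepsilon$ estimate; this shows the net is in fact strictly almost $\mathcal{A}$-central. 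Property $(\mathrm{T}_{\mathcal{E}})$ then yields the desired approximating central net.

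The only genuinely delicate point is the identification of the strict topology with the norm topology in the unital case, and it is precisely here that unitality cannot be dispensed with: for non-unital $\mathcal{A}$ the multiplier algebra is typically strictly larger and the strict topology strictly weaker than the norm topology, so strictly compact subsets of $\mathrm{M}(\mathcal{A})$ are genuinely more abundant than what finite subsets of $\mathcal{A}$ can approximate, and the three-$\varepsilon$ argument breaks down. The remaining steps are routine, relying only on contractivity of the bimodule actions and the compatibility of the canonical embedding with these actions.
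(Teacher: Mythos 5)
Your proof is correct and takes essentially the same approach as the paper: the forward direction via the observation that finite subsets of $\mathcal{A}$ embed as finite, hence strictly compact, subsets of $\mathrm{M}(\mathcal{A})$, and the converse via the identification $\mathrm{M}(\mathcal{A})\cong\mathcal{A}$ for unital $\mathcal{A}$, a finite $\varepsilon$-net of a strictly compact set, and a triangle-inequality estimate using contractivity of the actions. The only cosmetic difference is that where you identify the strict topology with the norm topology and invoke total boundedness of norm-compact sets, the paper extracts the same finite net by applying Lemma \ref{lem:str_cpt_properties}(ii) with $a_0=1_{\mathcal{A}}$.
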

\begin{proof}
Consider the canonical embedding $\varsigma:\mathcal{A}\hookrightarrow\mathrm{M}(\mathcal{A})$.
If $F\subset\mathcal{A}$ is a finite subset then $\varsigma(F)$ is a finite, hence strictly compact, subset of $\mathrm{M}(\mathcal{A})$. Now, for any $\xi\in E$, we have
\[
\sup_{a\in F}\norm{a\cdot\xi-\xi\cdot a}{E}=\sup_{x\in \varsigma(F)}\norm{x\cdot\xi-\xi\cdot x}{E}.
\]
Hence, any net of strictly almost $\mathcal{A}$-central vectors is automatically also a net of almost $\mathcal{A}$-central vectors. It is thus clear that property $(\mathrm{T}_{\mathcal{E}})$ for $\mathcal{A}$ is implied by the mentioned property.

For the converse implication, assume that $\mathcal{A}$ is unital. Let $S$ be any strictly compact subset of $\mathrm{M}(\mathcal{A})$. Because $\mathcal{A}$ is unital, any element in $\mathrm{M}(\mathcal{A})$ is of the form $(L_a,R_a)$, for some $a\in\mathcal{A}$.
Thus, $S=\varsigma(F)$ for some (not necessarily finite) subset $F$ of $\mathcal{A}$. For a given $\varepsilon>0$, we apply Lemma \ref{lem:str_cpt_properties}(ii) with the identity of $\mathcal{A}$ in place of $a_0$ to obtain a finite number of elements $a_1,\ldots,a_n\in F$ such that, for any $a\in F$, there is a $k\in\{1,\ldots,n\}$ for which
\[
\norm{a-a_k}{\mathcal{A}}=\norm{L_a(1_{\mathcal{A}})-L_{a_k}(1_{\mathcal{A}})}{\mathcal{A}}<\varepsilon.\numberthis\label{eq:TE_for_unital_algebras_approx_a}
\]
Let $E$ be an $\mathcal{A}$-bimodule. For $a\in F$, take $k$ such that equation (\ref{eq:TE_for_unital_algebras_approx_a}) holds. Then,
\[
\norm{a\cdot\xi-\xi\cdot a}{E}
\leq \norm{a_k\cdot\xi-\xi\cdot a_k}{E}+2\varepsilon\norm{\xi}{E},
\]
for any $\xi\in E$. Thus, for any $\xi\in E$ with $\norm{\xi}{E}=1$,
\[
\sup_{x\in \varsigma(F)}\norm{x\cdot\xi-\xi\cdot x}{E}=\sup_{a\in F}\norm{a\cdot\xi-\xi\cdot a}{E}\leq\sup_{k\in\{1,\ldots,n\}}\norm{a_k\cdot\xi-\xi\cdot a_k}{E}+2\varepsilon.
\]
So any net of vectors in $E$ which is almost $\mathcal{A}$-central is also strictly almost $\mathcal{A}$-central. Hence, if $E$ admits a net of almost $\mathcal{A}$-central unit vectors, property $(T_{\mathcal{E}})$ of $\mathcal{A}$ will imply the existence of a net $\net{\eta}{i}{I}$ of central vectors such that $\norm{\xi_i-\eta_i}{E}$ converges to zero.
\end{proof}
\begin{rk}
The similar statement to that of Proposition \ref{prop:TE_for_unital_algebras} but for weak property $(\mathrm{T}_{\mathcal{E}})$ also holds with essentially the same proof.
\end{rk}

Before proceeding to specific cases, we record the following permanence property:
\begin{prop}\label{prop:TE_preserved_under_hom_w_dense_im}
Let $\mathcal{A}$ be a Banach algebra with a bounded approximate unit, let $\mathcal{B}$ be another Banach algebra and let $\varphi:\mathcal{A}\rightarrow\mathcal{B}$ be a bounded homomorphism with dense range. If $\mathcal{A}$ has (weak) property $(\mathrm{T}_{\mathcal{E}})$, for a class of Banach spaces $\mathcal{E}$, then so does $\mathcal{B}$.
\end{prop}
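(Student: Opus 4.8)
The plan is to transport the module datum for $\mathcal{B}$ back to $\mathcal{A}$ along $\varphi$, apply the hypothesis on $\mathcal{A}$, and then recognize the resulting approximating vectors as living in $E^{\mathcal{B}}$; the density of the range of $\varphi$ and the strict continuity of the multiplier extension do essentially all the work. First I would note that the statement even makes sense for $\mathcal{B}$: since $\varphi$ has dense range, the image under $\varphi$ of a bounded approximate unit of $\mathcal{A}$ is a bounded approximate unit of $\mathcal{B}$, as recorded just before Theorem \ref{thm:extending_hom_to_DC}. Now let $E\in\mathcal{E}$ be an essential $\mathcal{B}$-bimodule carrying a net $\net{\xi}{i}{I}$ of strictly almost $\mathcal{B}$-central unit vectors. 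Pulling the two actions back along $\varphi$ — that is, setting $a\cdot\xi\defeq\varphi(a)\cdot\xi$ and $\xi\cdot a\defeq\xi\cdot\varphi(a)$ — turns $E$ into an $\mathcal{A}$-bimodule (contractive precisely when $\varphi$ is, which holds in all our applications; in general the actions are bounded by $\norm{\varphi}{}$).

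This $\mathcal{A}$-bimodule is essential: because the $\mathcal{B}$-actions are continuous and $\varphi(\mathcal{A})$ is dense in $\mathcal{B}$, the closed span of $\mathcal{A}\cdot E\cdot\mathcal{A}=\varphi(\mathcal{A})\cdot E\cdot\varphi(\mathcal{A})$ coincides with the closed span of $\mathcal{B}\cdot E\cdot\mathcal{B}$, which is $E$. The key step is to check that $\net{\xi}{i}{I}$ remains strictly almost $\mathcal{A}$-central for the pulled-back structure. Let $\Phi:\mathrm{M}(\mathcal{A})\rightarrow\mathrm{M}(\mathcal{B})$ be the unique strictly continuous extension of $\varphi$ supplied by Theorem \ref{thm:extending_hom_to_DC}, which satisfies $\Phi\circ\varsigma_{\mathcal{A}}=\varsigma_{\mathcal{B}}\circ\varphi$; by uniqueness of the strictly continuous extension of the $\mathcal{A}$-action, the induced $\mathrm{M}(\mathcal{A})$-action on the essential bimodule $E$ is $x\cdot\xi=\Phi(x)\cdot\xi$.

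Given a strictly compact $T\subset\mathrm{M}(\mathcal{A})$, its image $\Phi(T)$ is strictly compact in $\mathrm{M}(\mathcal{B})$, being the image of a strictly compact set under the strictly continuous map $\Phi$. Since $x\cdot\xi_i=\Phi(x)\cdot\xi_i$ for every $x\in T$, we have $\sup_{x\in T}\norm{x\cdot\xi_i-\xi_i\cdot x}{E}=\sup_{y\in\Phi(T)}\norm{y\cdot\xi_i-\xi_i\cdot y}{E}$, so applying the strict almost $\mathcal{B}$-centrality of $\net{\xi}{i}{I}$ to the strictly compact set $\Phi(T)$ yields the required estimate over $T$. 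Hence $\net{\xi}{i}{I}$ is strictly almost $\mathcal{A}$-central, and property $(\mathrm{T}_{\mathcal{E}})$ of $\mathcal{A}$ produces a net $\net{\eta}{i}{I}$ in $E^{\mathcal{A}}$ with $\norm{\xi_i-\eta_i}{E}\rightarrow0$.

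It remains to see $E^{\mathcal{A}}\subseteq E^{\mathcal{B}}$: if $\eta$ satisfies $\varphi(a)\cdot\eta=\eta\cdot\varphi(a)$ for all $a\in\mathcal{A}$, then, because $\varphi(\mathcal{A})$ is dense in $\mathcal{B}$ and both $b\mapsto b\cdot\eta$ and $b\mapsto\eta\cdot b$ are continuous, we get $b\cdot\eta=\eta\cdot b$ for all $b\in\mathcal{B}$, i.e. $\eta\in E^{\mathcal{B}}$. Thus $\net{\eta}{i}{I}$ lies in $E^{\mathcal{B}}$ and witnesses property $(\mathrm{T}_{\mathcal{E}})$ for $\mathcal{B}$. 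The weak version is identical, with the approximating net replaced by a single non-zero central vector and the same inclusion $E^{\mathcal{A}}\subseteq E^{\mathcal{B}}$ invoked at the end. I expect the strict-compactness transfer to be the main obstacle, since it is exactly where the strict continuity of $\Phi$ (hence Theorem \ref{thm:extending_hom_to_DC}) is indispensable; the contractivity caveat in the pull-back is the only other point requiring care.
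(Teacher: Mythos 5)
Your proposal is correct and follows essentially the same route as the paper's proof: pull the bimodule structure back along $\varphi$, invoke Theorem \ref{thm:extending_hom_to_DC} to get the strictly continuous extension $\Phi:\mathrm{M}(\mathcal{A})\rightarrow\mathrm{M}(\mathcal{B})$ so that strictly compact sets and the bimodule actions transfer, apply property $(\mathrm{T}_{\mathcal{E}})$ of $\mathcal{A}$, and conclude via the inclusion $E^{\mathcal{A}}\subseteq E^{\mathcal{B}}$ from density of $\varphi(\mathcal{A})$. Your added remarks (the bounded approximate unit on $\mathcal{B}$, and the contractivity caveat for the pulled-back actions when $\varphi$ is merely bounded) are points the paper passes over silently, but they do not change the argument.
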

\begin{proof}
Let $E\in\mathcal{E}$ be an essential $\mathcal{B}$-bimodule admitting a net $\net{\xi}{i}{I}$ of strictly almost $\mathrm{M}(\mathcal{B})$-central unit vectors. Through precomposition with $\varphi$, $E$ becomes an $\mathcal{A}$-bimodule, and as such, it is  essential because $\varphi$ has dense range. We check that the net $\net{\xi}{i}{I}$ remains almost central for the induced $\mathrm{M}(\mathcal{A})$-bimodule structure. By Theorem \ref{thm:extending_hom_to_DC}, $\varphi$ extends to a strictly continuous homomorphism $\Phi:\mathrm{M}(\mathcal{A})\rightarrow\mathrm{M}(\mathcal{B})$ and the $\mathrm{M}(\mathcal{A})$-bimodule structure on $E$ induced through $\Phi$ from the $\mathrm{M}(\mathcal{B})$-bimodule structure agrees with the extension of the $\mathcal{A}$-bimodule structure induced through $\varphi$ from the $\mathcal{B}$-bimodule structure. Let $S\subset\mathrm{M}(\mathcal{A})$ be any strictly compact subset. The image of $S$ under $\Phi$ is then a strictly compact subset of $\mathrm{M}(\mathcal{B})$. Hence,
\[
\sup_{x\in S}\norm{x\cdot\xi_i-\xi_i\cdot x}{E}=\sup_{x\in S}\norm{\Phi(x)\cdot\xi_i-\xi_i\cdot \Phi(x)}{E}=\sup_{y\in \Phi(S)}\norm{y\cdot\xi_i-\xi_i\cdot y}{E}\rightarrow0,
\]
and so, $\net{\xi}{i}{I}$ is a net of strictly almost $\mathcal{A}$-central unit vectors. Now, if $\xi$ is any $\mathcal{A}$-central vector, then density of the image of $\mathcal{A}$ under $\varphi$ implies that $\xi$ must also be $\mathcal{B}$-central. Hence, if $\mathcal{A}$ has (weak) property $(T_{\mathcal{E}})$ then so does $\mathcal{B}$.
\end{proof}

\subsection{Locally compact groups and their pseudofunction algebras}
In this section we provide a characterization of (weak) property $(\mathrm{T}_{\mathcal{E}})$ of a locally compact group $G$ in terms of (weak) property $(\mathrm{T}_\mathcal E)$ of $F^*_{\mathcal E}(G)$ for a class of Banach spaces $\mathcal E$; see Theorem \ref{thm:(T_E)_for_G_iff_for_FstarE(G)} and \ref{thm:(T_E)_for_G_iff_for_FstarE(G)weak}. 
It generalizes the similar result of Bekka and Ng in Theorem 1 in \cite{BekkaNg2018PropertyTCstar} from the $C^*$-algebra setting to the Banach algebra setting. The generalization comes in two versions reflecting the fact that, unlike in the Hilbert spaces setting, property $(\mathrm{T}_ \mathcal E)$ for a group need not be equivalent to its weak relative. The proof relies on a natural way of constructing an isometric representation from an $F^*_{\mathcal{E}}(G)$-bimodule, and vice versa.\\

Let $\mathcal{E}$ be a class of Banach spaces and let $E\in\mathcal{E}$ be an essential $F^*_{\mathcal{E}}(G)$-bimodule with left and right actions
\[
\varphi:F^*_{\mathcal{E}}(G)\rightarrow \mathscr{B}(E)\qquad\mbox{and}\qquad \psi:F^*_{\mathcal{E}}(G)^{\mathrm{op}}\rightarrow\mathscr{B}(E).
\]
By Proposition \ref{prop:isom_rep_of_G_is_1-to-1_contr_rep_of_L1(G)} and Remark \ref{rk:isom_rep_of_G_is_1-to-1_contr_rep_of_F_R(G)}, $\varphi$ and $\psi$ are induced from isometric representations of $G$, respectively $G^{\mathrm{op}}$, which we shall also denote by $\varphi$ and $\psi$. Since $\varphi$ and $\psi$ have commuting ranges as left and right actions of $F^*_{\mathcal{E}}(G)$, their ranges as group representations commute as well. We construct a new isometric representation $\pi$ of $G$ on $E$ by setting
\[
\pi(t)\xi=\varphi(t)\psi(t^{-1})\xi,\qquad\mbox{for all }t\in G\mbox{ and }\xi\in E. \numberthis\label{eqn:isom_rep_from_bimodule}
\]
It is easy to see that a vector $\xi\in E$ is $G$-invariant if and only if it is $F^*_{\mathcal{E}}(G)$-central. Further, for every vector $\xi\in E$ and every $t\in G$, Proposition \ref{prop:DC_bimodule_structure_encodes_gp_action} yields that
\[
\norm{\pi(t)\xi-\xi}{E}=\norm{\varphi(t)\xi-\psi(t)\xi}{E}=\norm{(L_t,\Delta(t^{-1})R_{t^{-1}})\cdot\xi-\xi\cdot(L_t,\Delta(t^{-1})R_{t^{-1}})}{E},
\]
Because the embedding $G\hookrightarrow\mathrm{M}(F^*_{\mathcal{E}}(G))$ is continuous when $\mathrm{M}(F^*_{\mathcal{E}}(G))$ is equipped with the strict topology, we see that any net of strictly almost $\mathrm{M}(F^*_{\mathcal{E}}(G))$-central vectors is also a net of almost $G$-invariant vectors.\\

Now let $(\pi,E)$ be an isometric representation of $G$ with $E \in \mathcal{E}$. Then $\pi$ extends to a non-degenerate, contractive representation of $F^*_{\mathcal{E}}(G)$ on $E$. Further, since the trivial representation $1_G$ is contained in $\Rep_{\mathcal{E}}(G)$, it extends to $F^*_{\mathcal{E}}(G)$. This induces an essential $F^*_{\mathcal{E}}(G)$-bimodule structure on $E$ with left action $\pi$ and right action $1_G$. It is easy to see that the $F^*_{\mathcal{E}}(G)$-central vectors for this bimodule structure are exactly the $G$-invariant vectors. Further, for each $\xi\in E$ and each $f\in C_c(G)$,

\begin{align*}
\norm{\pi(f)\xi-1_G(f)\xi}{E}
&=\norm{\integral{f(s)\left(\pi(s)\xi-\xi\right)}{G}{\mu_G(s)}}{E}\\
&\leq \integral{\abs{f(s)}\norm{\pi(s)\xi-\xi}{E}}{G}{\mu_G(s)}\\ 
&\leq \sup_{s\in\supp f}\norm{\pi(s)\xi-\xi}{E}\norm{f}{1}.
\end{align*}
Let $x\in F^*_{\mathcal{E}}(G)$. For any $\varepsilon>0$, we can find $f\in C_c(G)$ such that $\norm{x-f}{F^*_{\mathcal{E}}(G)}<\varepsilon$. Then
\begin{align*}
\norm{\pi(x)\xi-1_G(x)\xi}{E}
< \norm{\pi(f)\xi-1_G(f)\xi}{E}+ 2\varepsilon.
\end{align*}
Hence, if $\net{\xi}{i}{I}$ is a net in $E$ of almost invariant unit vectors then it is almost $F^*_{\mathcal{E}}(G)$-central for the bimodule structure on $E$ with left action $\pi$ and right action $1_G$. In fact, as we shall see next, it will be almost central for the extension of the bimodule structure to the multiplier algebra. We show this in the following technical lemma, which is based on the proof of Proposition 10 in \cite{BekkaNg2018PropertyTCstar}.
\begin{lem}\label{lem:bimodule_str_from_rep_almost_central_vectors}
Let $(\pi,E)$ be an isometric representation of the locally compact group $G$ with $E$ in the class $\mathcal{E}$, and view $E$ as an $F^*_{\mathcal{E}}(G)$-bimodule with left action $\pi$ and right action $1_G$. Then any net of almost $F^*_{\mathcal{E}}(G)$-central unit vectors is automatically strictly almost $F^*_{\mathcal{E}}(G)$-central.
\end{lem}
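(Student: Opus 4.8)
The plan is to exploit the very simple structure of this particular bimodule: the left action is the (extended) representation $\pi$, while the right action is the augmentation character. Write $\tau\colon F^*_{\mathcal{E}}(G)\to\mathbb{C}$ for the character extending $f\mapsto\int_G f\,\mathrm{d}\mu_G$, so that $\xi\cdot a=\tau(a)\xi$ for all $\xi\in E$ and $a\in F^*_{\mathcal{E}}(G)$; by Theorem \ref{thm:extending_hom_to_DC}, $\tau$ extends to a character $\overline{\tau}$ on $\mathrm{M}(F^*_{\mathcal{E}}(G))$, and the extended right action is the scalar action $\xi\cdot x=\overline{\tau}(x)\xi$. I then fix a bounded approximate unit $(e_\mu)$ for $F^*_{\mathcal{E}}(G)$ coming from $C_c(G)$, normalized so that $\tau(e_\mu)=1$; consequently $\xi\cdot e_\mu=\xi$ for every $\xi\in E$.

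First I would record two identities for a multiplier $x=(L,R)$ and a single such element $e=e_\mu$. On the left, associativity of the extended module action gives $x\cdot(e\cdot\xi)=(x\cdot e)\cdot\xi=L(e)\cdot\xi$, where $L(e)=x\cdot e$ is a genuine element of $F^*_{\mathcal{E}}(G)$. On the right, $\overline{\tau}(x)\tau(e)=\tau(x\cdot e)=\tau(L(e))$ together with $\tau(e)=1$ yields $\overline{\tau}(x)=\tau(L(e))$, whence $\xi\cdot x=\tau(L(e))\xi=\xi\cdot L(e)$. Thus both the left and the right multiplier actions are captured, on $e$-smoothed vectors, by the single algebra element $L(e)=x\cdot e$; this is the step that converts \emph{strict} almost centrality into ordinary almost centrality.

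Now fix a strictly compact $S\subset\mathrm{M}(F^*_{\mathcal{E}}(G))$ and $\varepsilon>0$, and let $M<\infty$ be a common bound for the operator norms of the left multiplier actions $\xi\mapsto x\cdot\xi$, $x\in S$; this exists because $S$ is norm-bounded (Lemma \ref{lem:str_cpt_properties}(i)) and the extended action is bounded. For $x=(L,R)\in S$ I would use the two identities above to decompose
\[
x\cdot\xi_i-\xi_i\cdot x=\bigl(L(e)\cdot\xi_i-\xi_i\cdot L(e)\bigr)+x\cdot(\xi_i-e\cdot\xi_i).
\]
The last term is bounded by $M\norm{\xi_i-e\cdot\xi_i}{E}=M\norm{\xi_i\cdot e-e\cdot\xi_i}{E}$, which tends to $0$ uniformly in $x\in S$ by almost $F^*_{\mathcal{E}}(G)$-centrality applied to the finite set $\{e\}$. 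For the first term I would apply Lemma \ref{lem:str_cpt_properties}(ii) with $a_0=e$ and tolerance $\delta=\varepsilon/4$ to obtain finitely many $x_1,\dots,x_n\in S$ such that every $x\cdot e$ lies within $\delta$ of some $x_k\cdot e=L_k(e)$; then, since the actions are contractive and the $\xi_i$ are unit vectors,
\[
\norm{L(e)\cdot\xi_i-\xi_i\cdot L(e)}{E}\leq\norm{L_k(e)\cdot\xi_i-\xi_i\cdot L_k(e)}{E}+2\delta,
\]
and the right-hand commutator is small for large $i$ by almost centrality applied to the finite set $\{L_1(e),\dots,L_n(e)\}$. Choosing $i_0$ past which all these finitely many estimates hold yields $\sup_{x\in S}\norm{x\cdot\xi_i-\xi_i\cdot x}{E}<\varepsilon$.

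The main obstacle is bookkeeping rather than conceptual: one must fix the single approximate-unit element $e$ \emph{before} sending the net index to infinity, so that each appeal to almost centrality is made over a fixed finite set, and one must ensure every error bound is uniform over the norm-bounded set $S$. The essential inputs are Lemma \ref{lem:str_cpt_properties}(ii), which replaces the strictly compact $S$ by the finite set $\{L_1(e),\dots,L_n(e)\}$, and the triviality of the right action, which collapses $\xi_i\cdot x$ exactly to $\xi_i\cdot L(e)$.
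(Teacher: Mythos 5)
Your proof is correct and takes essentially the same approach as the paper's: both hinge on fixing an element with $1_G$-value one (your $e$, the paper's $a_0$), applying Lemma \ref{lem:str_cpt_properties} to replace the strictly compact set $S$ by the finite family $x_1\cdot e,\ldots,x_n\cdot e$ of genuine algebra elements, and then invoking almost centrality over finite sets. The only divergence is bookkeeping: the paper passes to the normalized smoothed net $\eta_i=\pi(a_0)\xi_i/\norm{\pi(a_0)\xi_i}{E}$ and transfers strict almost centrality back to the original net, whereas you telescope directly on $\xi_i$ via the identity $x\cdot\xi_i-\xi_i\cdot x=\bigl(L(e)\cdot\xi_i-\xi_i\cdot L(e)\bigr)+x\cdot(\xi_i-e\cdot\xi_i)$, which avoids the normalization and the passage to a subnet.
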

\begin{proof}
Suppose $\net{\xi}{i}{I}$ is a net of almost $F^*_{\mathcal{E}}(G)$-central unit vectors in $E$. Fix $a_0\in F^*_{\mathcal{E}}(G)$ such that $1_G(a_0)=1$. Then
\[
\abs{\norm{\pi(a_0)\xi_i}{E}-1}\leq\norm{\pi(a_0)\xi_i-1_G(a_0)\xi_i}{E},
\]
for all $i\in I$, and so,
\[
\lim_i\norm{\pi(a_0)\xi_i}{E}=1.
\]
We may assume that $\pi(a_0)\xi_i$ is non-zero for all $i\in I$ as we can otherwise pass to a subnet. Define, for each $i\in I$,
\[
\eta_i=\frac{\pi(a_0)\xi_i}{\norm{\pi(a_0)\xi_i}{E}}.
\]
We claim that $\net{\eta}{i}{I}$ constitutes a net of strictly almost $F^*_{\mathcal{E}}(G)$-central unit vectors. To see this, let $S$ be any strictly compact subset of $\mathrm{M}(F^*_{\mathcal{E}}(G))$. Given $\varepsilon>0$, we can find a finite collection of elements $x_1,\ldots,x_n$ of $S$ such that, for every $x\in S$, there is a $k\in\{1,\ldots,n\}$ for which
\[
\norm{xa_0-x_ka_0}{F^*_{\mathcal{E}}(G)}<\frac{\varepsilon}{12}.\numberthis\label{eqn:suff_cond_for_containing_1-dim_rep_of_B-alg_1}
\]
Take $i_0\in I$ such that the following hold, for all $i\succcurlyeq i_0$ and all $k=1,\ldots,n$,
\begin{gather*}
\norm{\pi(a_0)\xi_i}{E}\geq\frac{1}{2},\\ 
\norm{\pi(a_0)\xi_i-1_G(a_0)\xi_i}{E}<\frac{\varepsilon}{4\sup_{y\in S}{\norm{y}{}}},\\ 
\norm{\pi(x_ka_0)\xi_i-1_G(x_ka_0)\xi_i}{E}<\frac{\varepsilon}{12}.
\end{gather*}
Now, given $x\in S$, take $k\in\{1,\ldots,n\}$ such that (\ref{eqn:suff_cond_for_containing_1-dim_rep_of_B-alg_1}) holds. Then
\begin{align*}
\norm{x\cdot\eta_i-\eta_i\cdot x}{E}&=\frac{1}{\norm{\pi(a_0)\xi_i}{E}}\norm{\pi(x)\pi(a_0)\xi_i-1_G(x)\pi(a_0)\xi_i}{E}\\ 
&\leq2\norm{\pi(xa_0)\xi_i-\pi(x_ka_0)\xi_i}{E}+2\norm{\pi(x_ka_0)\xi_i-1_G(x_ka_0)\xi_i}{E}\\ 
&\phantom{=}+2\norm{1_G(x_ka_0)\xi_i-1_G(xa_0)\xi_i}{E}+2\norm{1_G(xa_0)\xi_i-1_G(x)\pi(a_0)\xi_i}{E}\\ 
&\leq \frac{\varepsilon}{6}+\frac{\varepsilon}{6}+\frac{\varepsilon}{6}+\abs{1_G(x)}\frac{\varepsilon}{2\sup_{y\in S}{\norm{y}{}}} <\varepsilon.
\end{align*}
Thus, $\net{\eta}{i}{I}$ is indeed a net of strictly almost $F^*_{\mathcal{E}}(G)$-central unit vectors in $E$. Now, by construction of the net $\net{\eta}{i}{I}$, the norm difference $\norm{\xi_i-\eta_i}{E}$ converges to zero. Hence, $\net{\xi}{i}{I}$ is a net of strictly almost $F^*_{\mathcal{E}}(G)$-central unit vectors in $E$, as well.
\end{proof}

Theorem \ref{thm:(T_E)_for_G_iff_for_FstarE(G)} below, which is one of our main results, relates property $(\mathrm{T}_{\mathcal{E}})$ for a locally compact group $G$ with property $(\mathrm{T}_{\mathcal{E}})$ of its associated symmetrised $\mathcal{E}$-pseudofunction algebra.

\refstepcounter{thm}\label{thm:(T_E)_for_G_iff_for_FstarE(G)bothversions}
\begin{subthm}\label{thm:(T_E)_for_G_iff_for_FstarE(G)}
Let $G$ be a second countable locally compact group and let $\mathcal{E}$ be a class of Banach spaces. The following are equivalent:
\begin{enumerate}[(i)]
\item $G$ has property $(\mathrm{T}_{\mathcal{E}})$,
\item $F^*_{\mathcal{E}}(G)$ has property $(\mathrm{T}_{\mathcal{E}})$.
\end{enumerate}
\end{subthm}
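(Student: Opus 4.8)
The plan is to prove the equivalence by exploiting the two constructions developed in the lead-up to the theorem, which pass back and forth between isometric representations of $G$ and essential $F^*_{\mathcal{E}}(G)$-bimodules while preserving both invariant/central vectors and the relevant almost-invariance/almost-centrality conditions. The key observation, already recorded in the excerpt, is that under both constructions a vector is $G$-invariant precisely when it is $F^*_{\mathcal{E}}(G)$-central, so the subspaces $E^{\pi}$ and $E^{F^*_{\mathcal{E}}(G)}$ coincide. Thus the matching condition $\norm{\xi_i-\eta_i}{E}\rightarrow0$ is literally the same statement on both sides, and the work is entirely in matching up the two notions of ``almost invariant'' versus ``strictly almost central''.

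First I would prove the implication (ii)$\Rightarrow$(i). Starting from an isometric representation $(\pi,E)$ of $G$ with $E\in\mathcal{E}$ admitting a net $\net{\xi}{i}{I}$ of almost invariant unit vectors, I would equip $E$ with the essential $F^*_{\mathcal{E}}(G)$-bimodule structure having left action $\pi$ and right action $1_G$, exactly as set up in the paragraph preceding Lemma \ref{lem:bimodule_str_from_rep_almost_central_vectors}. The computation there shows $\net{\xi}{i}{I}$ is almost $F^*_{\mathcal{E}}(G)$-central, and then Lemma \ref{lem:bimodule_str_from_rep_almost_central_vectors} upgrades this to strictly almost $F^*_{\mathcal{E}}(G)$-central. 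Property $(\mathrm{T}_{\mathcal{E}})$ for $F^*_{\mathcal{E}}(G)$ then yields a net $\net{\eta}{i}{I}$ in $E^{F^*_{\mathcal{E}}(G)}=E^{\pi}$ with $\norm{\xi_i-\eta_i}{E}\rightarrow0$, which is exactly property $(\mathrm{T}_{\mathcal{E}})$ for $G$ in the form of Definition \ref{def:(T_E)_superrefl}.

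For the converse (i)$\Rightarrow$(ii), I would start from an essential $F^*_{\mathcal{E}}(G)$-bimodule $E\in\mathcal{E}$ with left and right actions $\varphi,\psi$ admitting a net $\net{\xi}{i}{I}$ of strictly almost $F^*_{\mathcal{E}}(G)$-central unit vectors, and form the isometric representation $\pi(t)=\varphi(t)\psi(t^{-1})$ of equation (\ref{eqn:isom_rep_from_bimodule}). The displayed computation just after that equation, together with strict continuity of the embedding $G\hookrightarrow\mathrm{M}(F^*_{\mathcal{E}}(G))$, shows that $\net{\xi}{i}{I}$ is a net of almost $G$-invariant vectors for $\pi$. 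Property $(\mathrm{T}_{\mathcal{E}})$ for $G$ then supplies $\net{\eta}{i}{I}$ in $E^{\pi}=E^{F^*_{\mathcal{E}}(G)}$ with $\norm{\xi_i-\eta_i}{E}\rightarrow0$, which is property $(\mathrm{T}_{\mathcal{E}})$ for $F^*_{\mathcal{E}}(G)$.

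I expect the main obstacle to be bookkeeping rather than conceptual: one must check carefully that the two bimodule constructions are genuinely inverse to each other at the level of central/invariant subspaces and almost-invariance conditions, and in particular that the net hypotheses transfer cleanly in both directions over the \emph{same} index set $I$ so that the approximating nets $\net{\eta}{i}{I}$ can be transported back without reindexing. The only genuinely technical input is Lemma \ref{lem:bimodule_str_from_rep_almost_central_vectors}, which bridges the gap between almost $F^*_{\mathcal{E}}(G)$-centrality and \emph{strictly} almost $F^*_{\mathcal{E}}(G)$-centrality; since it is already established, the remaining argument is essentially an assembly of the constructions recorded above. One subtlety worth flagging is that in the (i)$\Rightarrow$(ii) direction the bimodule has genuinely two nontrivial actions $\varphi,\psi$, whereas in (ii)$\Rightarrow$(i) the constructed bimodule has trivial right action; verifying that the notion of centrality matches invariance for $\pi$ in the asymmetric case relies precisely on Proposition \ref{prop:DC_bimodule_structure_encodes_gp_action}.
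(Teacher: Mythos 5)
Your proposal is correct and follows essentially the same route as the paper's own proof: for (i)$\Rightarrow$(ii) the paper also passes from the bimodule $(\varphi,\psi)$ to the representation $\pi(t)=\varphi(t)\psi(t^{-1})$ of equation (\ref{eqn:isom_rep_from_bimodule}) and uses strict continuity of $G\hookrightarrow\mathrm{M}(F^*_{\mathcal{E}}(G))$, and for (ii)$\Rightarrow$(i) it uses the bimodule with left action $\pi$ and right action $1_G$ together with Lemma \ref{lem:bimodule_str_from_rep_almost_central_vectors}, identifying central with invariant vectors in both directions exactly as you describe.
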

\begin{proof}
$(i)\Rightarrow(ii)$: Assume that $G$ has property $(\mathrm{T}_{\mathcal{E}})$ and let $E\in\mathcal{E}$ be an essential $F^*_{\mathcal{E}}(G)$-bimodule admitting a net $\net{\xi}{i}{I}$ of strictly almost $F^*_{\mathcal{E}}(G)$-central unit vectors. Then $\net{\xi}{i}{I}$ is almost $G$-invariant for the isometric representation $\pi$ of $G$ induced by the $F^*_{\mathcal{E}}(G)$-bimodule structure. By the assumption that $G$ has property $(\mathrm{T}_{\mathcal{E}})$, we obtain a net of $G$-invariant vectors $\net{\eta}{i}{I}$ such that $\norm{\xi_i-\eta_i}{E}\rightarrow0$. Since the $C_c(G)$ is dense in $F^*_{\mathcal{E}}(G)$, we see that each $\eta_i$ is $F^*_{\mathcal{E}}(G)$-central. Thus, $F^*_{\mathcal{E}}(G)$ has property $(\mathrm{T}_{\mathcal{E}})$.

$(ii)\Rightarrow(i)$: Assume $F^*_{\mathcal{E}}(G)$ has property $(\mathrm{T}_{\mathcal{E}})$ and let $(\pi,E)$ be an isometric representation of $G$ on a Banach space $E$ in $\mathcal{E}$ admitting a net $\net{\xi}{i}{I}$ of almost invariant unit vectors. Then $\net{\xi}{i}{I}$ is almost $F^*_{\mathcal{E}}(G)$-central for the bimodule structure on $E$ with left action $\pi$ and right action $1_G$. Hence, $\net{\xi}{i}{I}$ is automatically strictly almost $F^*_{\mathcal{E}}(G)$-central, by Lemma \ref{lem:bimodule_str_from_rep_almost_central_vectors}. By the assumption that $F^*_{\mathcal{E}}(G)$ has property $(\mathrm{T}_{\mathcal{E}})$, we obtain a net $\net{\eta}{i}{I}$ of $F^*_{\mathcal{E}}(G)$-central vectors such that $\norm{\xi_i-\eta_i}{E}\rightarrow0$. Hence, $G$ has property $(\mathrm{T}_{\mathcal{E}})$.
\end{proof}

\begin{rk}
Theorem \ref{thm:(T_E)_for_G_iff_for_FstarE(G)} also holds with $F_{\mathcal{E}}(G)$ in place of $F^*_{\mathcal{E}}(G)$. In fact, the proof shows the following stronger statement: If $G$ has property $(\mathrm{T}_{\mathcal{E}})$ then $F_{\mathcal{R}}(G)$ has property $(\mathrm{T}_{\mathcal{E}})$ for any class $\mathcal{R}$ of isometric Banach space representations of $G$. Further, a sufficient condition for the converse implication is that $\mathcal{R}$ contains the class of all isometric representations of $G$ on a space in $\mathcal{E}$.
\end{rk}

The similar statement holds when exchanging property $(\mathrm{T}_{\mathcal{E}})$ with weak property $(\mathrm{T}_{\mathcal{E}})$. The proof is the same mutatis mutantis, and so, we omit it.
\addtocounter{thm}{-1}
\begin{subthm}\label{thm:(T_E)_for_G_iff_for_FstarE(G)weak}
Let $G$ be a locally compact group and $\mathcal{E}$ a class of Banach spaces. The following are equivalent:
\begin{enumerate}[(i)]
\item $G$ has weak property $(\mathrm{T}_{\mathcal{E}})$,
\item $F^*_{\mathcal{E}}(G)$ has weak property $(\mathrm{T}_{\mathcal{E}})$.
\end{enumerate}
\end{subthm}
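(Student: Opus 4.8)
The plan is to run the argument of Theorem~\ref{thm:(T_E)_for_G_iff_for_FstarE(G)} line by line, replacing at each step the conclusion ``there is an approximating net of invariant (central) vectors'' by the weaker conclusion ``there is a non-zero invariant (central) vector''. All of the technical scaffolding set up just before Lemma~\ref{lem:bimodule_str_from_rep_almost_central_vectors} carries over unchanged: the construction of an isometric representation $\pi$ of $G$ from an essential $F^*_{\mathcal{E}}(G)$-bimodule via~(\ref{eqn:isom_rep_from_bimodule}); the converse construction equipping an isometric representation space with the bimodule structure whose left action is $\pi$ and whose right action is $1_G$; and, crucially, the identification in each case of the $G$-invariant vectors with the $F^*_{\mathcal{E}}(G)$-central vectors. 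It is this last equivalence that makes the translation to the weak setting immediate.

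For $(i)\Rightarrow(ii)$ I would take an essential $F^*_{\mathcal{E}}(G)$-bimodule $E\in\mathcal{E}$ carrying a net $\net{\xi}{i}{I}$ of strictly almost $F^*_{\mathcal{E}}(G)$-central unit vectors and pass to the induced isometric representation $\pi$ from~(\ref{eqn:isom_rep_from_bimodule}). The computation preceding Lemma~\ref{lem:bimodule_str_from_rep_almost_central_vectors} shows that $\net{\xi}{i}{I}$ is then a net of almost invariant unit vectors for $\pi$, so $\pi$ admits almost invariant vectors. Weak property $(\mathrm{T}_{\mathcal{E}})$ of $G$ (Definition~\ref{def:group_weak_(T_E)}) furnishes a non-zero $G$-invariant vector, which is automatically $F^*_{\mathcal{E}}(G)$-central; this is precisely the non-zero central vector demanded by Definition~\ref{def:weak_(T_E)}.

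For $(ii)\Rightarrow(i)$ I would start from an isometric representation $(\pi,E)$ with $E\in\mathcal{E}$ admitting a net $\net{\xi}{i}{I}$ of almost invariant unit vectors and give $E$ the bimodule structure with left action $\pi$ and right action $1_G$. The estimate preceding the lemma shows $\net{\xi}{i}{I}$ is almost $F^*_{\mathcal{E}}(G)$-central, and Lemma~\ref{lem:bimodule_str_from_rep_almost_central_vectors} upgrades this to strictly almost $F^*_{\mathcal{E}}(G)$-central. Weak property $(\mathrm{T}_{\mathcal{E}})$ of $F^*_{\mathcal{E}}(G)$ then produces a non-zero central vector, which is a non-zero $G$-invariant vector for $\pi$, yielding weak property $(\mathrm{T}_{\mathcal{E}})$ for $G$.

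I do not expect a genuine obstacle here: the entire technical content, namely the two constructions together with Lemma~\ref{lem:bimodule_str_from_rep_almost_central_vectors}, is already in place and is insensitive to whether one asks for an approximating net or for a single non-zero vector. The only points worth flagging are that the equivalence ``$G$-invariant $\Leftrightarrow$ $F^*_{\mathcal{E}}(G)$-central'' is invoked in both directions, and that, unlike in Theorem~\ref{thm:(T_E)_for_G_iff_for_FstarE(G)}, no second-countability hypothesis on $G$ is needed: the weak definitions on both the group side (Definition~\ref{def:group_weak_(T_E)}) and the algebra side (Definition~\ref{def:weak_(T_E)}) are phrased directly in terms of the existence of a non-zero invariant, respectively central, vector, so one never appeals to the net reformulation of property $(\mathrm{T}_{\mathcal{E}})$ that required second countability.
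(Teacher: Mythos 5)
Your proposal is correct and is exactly what the paper does: the paper omits the proof of Theorem~\ref{thm:(T_E)_for_G_iff_for_FstarE(G)weak}, stating that it is the proof of Theorem~\ref{thm:(T_E)_for_G_iff_for_FstarE(G)} mutatis mutandis, which is precisely the substitution of ``non-zero invariant (central) vector'' for ``approximating net'' that you carry out, reusing the two bimodule/representation constructions and Lemma~\ref{lem:bimodule_str_from_rep_almost_central_vectors}. Your remark that second countability is not needed in the weak setting also matches the paper, whose weak statement is indeed phrased for general locally compact groups.
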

\setcounter{subthm}{0}
\addtocounter{thm}{1}

\begin{rk}
When $\mathcal{E}$ is a class satisfying either of the two conditions of Proposition \ref{prop:on_eq_of_(T_E)_and_weak(T_E)} so that property $(\mathrm{T}_{\mathcal{E}})$ and weak property $(\mathrm{T}_{\mathcal{E}})$ for the group are equivalent, the two theorems \ref{thm:(T_E)_for_G_iff_for_FstarE(G)} and \ref{thm:(T_E)_for_G_iff_for_FstarE(G)weak} can be merged into one. This holds, in particular, when $\mathcal{E}$ is the class of complex Hilbert spaces, in which case we recover the similar result of Bekka and Ng in Theorem 1 in \cite{BekkaNg2018PropertyTCstar}.
\end{rk}

As an immediate corollary to Theorem \ref{thm:(T_E)_for_G_iff_for_FstarE(G)} and to Theorem A in \cite{BaderFurmanGelanderMonod}, we obtain the following equivalence:

\begin{cor} \label{cor:(T_Lp)_iff_(T_Lq)}
Let $G$ be a second countable locally compact group and let $1\leq p,q<\infty$. Then $F^*_{L^p}(G)$ has property $(\mathrm{T}_{L^p})$ if and only if $F^*_{L^q}(G)$ has property $(\mathrm{T}_{L^q})$.
\end{cor}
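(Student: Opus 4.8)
The plan is to chain together the group-versus-algebra equivalence of Theorem \ref{thm:(T_E)_for_G_iff_for_FstarE(G)} with the classical theorem of Bader, Furman, Gelander and Monod recorded in Remark \ref{rk: T implies TLp}, which identifies property $(\mathrm{T}_{L^p})$ for the group with Kazhdan's property $(\mathrm{T})$ irrespective of $p$. Both ingredients are already in hand, so the argument should amount to a two-step substitution with no genuine analytic content; the only point deserving a moment's attention is reconciling the class-level formulation of $(\mathrm{T}_{L^p})$ used here with the single-measure formulation of the cited theorem.

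First I would apply Theorem \ref{thm:(T_E)_for_G_iff_for_FstarE(G)} twice, once with $\mathcal{E}$ equal to the class $L^p$ and once with $\mathcal{E}$ equal to $L^q$, thereby transferring the assertion about the pseudofunction algebras to an assertion about the group itself: $F^*_{L^p}(G)$ has property $(\mathrm{T}_{L^p})$ if and only if $G$ has property $(\mathrm{T}_{L^p})$, and similarly with $q$ in place of $p$. Next I would invoke Remark \ref{rk: T implies TLp}, i.e.\ Theorem~A of \cite{BaderFurmanGelanderMonod}, according to which property $(\mathrm{T}_{L^r(\mu)})$ for a single $\sigma$-finite measure $\mu$ and any $1\leq r<\infty$ coincides with Kazhdan's property $(\mathrm{T})$. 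To move from this single-space statement to the class version it suffices to note that property $(\mathrm{T}_{\mathcal{E}})$ for a class $\mathcal{E}$, as in Definition \ref{def:(T_E)_superrefl}, is just the conjunction of $(\mathrm{T}_E)$ over all $E\in\mathcal{E}$; hence $(\mathrm{T}_{L^p})$ for the whole class is itself equivalent to Kazhdan's property $(\mathrm{T})$, and the same holds for $(\mathrm{T}_{L^q})$. In particular the group-level conditions $(\mathrm{T}_{L^p})$ and $(\mathrm{T}_{L^q})$ are both equivalent to the single, $p$-independent property $(\mathrm{T})$.

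Combining the two steps then produces the chain of equivalences $F^*_{L^p}(G)$ has $(\mathrm{T}_{L^p})$ $\iff$ $G$ has $(\mathrm{T})$ $\iff$ $F^*_{L^q}(G)$ has $(\mathrm{T}_{L^q})$, which is exactly the claim. I do not expect any real obstacle here, since the corollary is a formal consequence of the two quoted results; if anything, the subtlest point is the class-versus-single-space reconciliation just indicated, and that is immediate from the definition of $(\mathrm{T}_{\mathcal{E}})$.
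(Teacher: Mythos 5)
Your proposal is correct and is exactly the paper's argument: the paper presents this corollary as an immediate consequence of Theorem \ref{thm:(T_E)_for_G_iff_for_FstarE(G)} together with Theorem~A of \cite{BaderFurmanGelanderMonod} (as recorded in Remark \ref{rk: T implies TLp}), i.e.\ the same chain $F^*_{L^p}(G)$ has $(\mathrm{T}_{L^p})$ $\iff$ $G$ has $(\mathrm{T}_{L^p})$ $\iff$ $G$ has $(\mathrm{T})$ $\iff$ $G$ has $(\mathrm{T}_{L^q})$ $\iff$ $F^*_{L^q}(G)$ has $(\mathrm{T}_{L^q})$. Your observation that the class-level property is the conjunction of the single-space properties, so that the single-measure formulation of Theorem~A upgrades to the class $L^p$, is the right (and only) point needing care, and it holds as you say.
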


\subsection{Property $(\mathrm{T}_{L^q})$ for $F^*_{L^p}(G)$}
Let $\mathcal{E}$ be the class of $L^p$-spaces. One may view the associated Banach $^*$-algebras $F^*_{L^p}(G)$ as interpolating between $L^1(G)$ and the universal group $C^*$-algebra $C^*(G)$ as $p$ varies from $1$ to $2$. Precisely, for $1\leq q\leq p\leq 2$, the identity on $L^1(G)$ extends to a contractive homomorphism $F^*_{L^q}(G)\rightarrow F^*_{L^p}(G)$.\footnote{ 
This follows from the similar result in the classical (non-symmetrized) setting by Gardella and Thiel (see Theorem 2.30 in \cite{GardellaThiel2014GpAlgActingOnLp}), but it can also be proven more directly in the symmetrized setting via interpolation theory. 
The latter proof is part of work in progress of the first named author.} With this in mind, we obtain the following results as consequences to Theorem \ref{thm:(T_E)_for_G_iff_for_FstarE(G)}: 

\begin{cor}\label{cor:FLp_has_TLq_lcgp}
Let $G$ be a second countable locally compact group with property $(\mathrm{T})$ and let $1\leq p\leq2$. Then $F^*_{L^p}(G)$ has property $(\mathrm{T}_{L^q})$, for all $1\leq q\leq p$ and all $p'\leq q<\infty$, where $p'$ is the Hölder conjugate of $p$.
\end{cor}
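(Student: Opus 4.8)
The plan is to deduce Corollary \ref{cor:FLp_has_TLq_lcgp} from Theorem \ref{thm:(T_E)_for_G_iff_for_FstarE(G)} by reducing the property $(\mathrm{T}_{L^q})$ of the algebra $F^*_{L^p}(G)$ to a property $(\mathrm{T}_{L^q})$ of the \emph{group} $G$, and then invoking the Bader--Furman--Gelander--Monod equivalence (Remark \ref{rk: T implies TLp}) that property $(\mathrm{T})$ coincides with $(\mathrm{T}_{L^q})$ for the group for every $1 \le q < \infty$. The subtlety is that Theorem \ref{thm:(T_E)_for_G_iff_for_FstarE(G)} as stated matches the \emph{same} class $\mathcal{E}$ on both sides, namely it relates $(\mathrm{T}_{\mathcal{E}})$ for $G$ to $(\mathrm{T}_{\mathcal{E}})$ for $F^*_{\mathcal{E}}(G)$, whereas here we want $(\mathrm{T}_{L^q})$ for $F^*_{L^p}(G)$, with possibly $p \ne q$. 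So the first thing I would do is clarify exactly which universal object is being used. The point is that the left-regular-type construction underlying $F^*_{L^p}(G)$ provides enough isometric representations on $L^q$-spaces to run the argument, once the constraint on $q$ is respected.

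\textbf{Step 1: isolate the essential bimodule argument.} First I would take an essential $F^*_{L^p}(G)$-bimodule $E$ in the class $L^q$ admitting a net of strictly almost central unit vectors, and, exactly as in the proof of Theorem \ref{thm:(T_E)_for_G_iff_for_FstarE(G)}(i)$\Rightarrow$(ii), build from the commuting left and right actions the isometric representation $\pi(t) = \varphi(t)\psi(t^{-1})$ of $G$ on $E$ via equation (\ref{eqn:isom_rep_from_bimodule}). The computation in the excerpt shows that the strictly almost central net becomes a net of almost $G$-invariant unit vectors for $\pi$, and that $F^*_{L^p}(G)$-central vectors coincide with $G$-invariant vectors. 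Since $E \in L^q$ and $G$ has property $(\mathrm{T})$, Remark \ref{rk: T implies TLp} gives that $G$ has $(\mathrm{T}_{L^q})$, so Definition \ref{def:(T_E)_superrefl} produces a net of $G$-invariant — hence $F^*_{L^p}(G)$-central — vectors $\net{\eta}{i}{I}$ with $\norm{\xi_i-\eta_i}{E}\to 0$. This already yields $(\mathrm{T}_{L^q})$ for $F^*_{L^p}(G)$ \emph{directly}, without needing $p=q$, provided the bimodule structure indeed integrates an isometric $G$-representation on a space in the class $L^q$; the key input is that the left and right actions of $F^*_{L^p}(G)$, being non-degenerate contractive, come from isometric $G$-representations by Proposition \ref{prop:isom_rep_of_G_is_1-to-1_contr_rep_of_L1(G)} and Remark \ref{rk:isom_rep_of_G_is_1-to-1_contr_rep_of_F_R(G)}.

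\textbf{Step 2: justify the range of $q$.} The constraint $1 \le q \le p$ or $p' \le q < \infty$ is where the contractive homomorphisms recorded in the footnote enter. For $1 \le q \le p \le 2$ there is a contractive homomorphism $F^*_{L^q}(G) \to F^*_{L^p}(G)$ with dense range (the identity on $L^1(G)$). I would therefore apply Corollary \ref{cor:(T_Lp)_iff_(T_Lq)} together with Proposition \ref{prop:TE_preserved_under_hom_w_dense_im}: since $G$ having $(\mathrm{T})$ gives, via Theorem \ref{thm:(T_E)_for_G_iff_for_FstarE(G)}, that $F^*_{L^q}(G)$ has $(\mathrm{T}_{L^q})$, the dense-range homomorphism transports this to $F^*_{L^p}(G)$ having $(\mathrm{T}_{L^q})$. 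For the dual range $p' \le q < \infty$, I would use that $F^*_{L^p}(G) = F^*_{L^{p'}}(G)$ as Banach $^*$-algebras — they are built from classes closed under duality, and an $L^p$-representation's dual is an $L^{p'}$-representation — so the case $p' \le q$ reduces to the case $1 \le q \le p'$ already handled, with $p'$ in place of $p$ (noting $p' \ge 2 \ge p$, so the roles of the two ranges swap under $p \leftrightarrow p'$).

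\textbf{Main obstacle.} The delicate point I expect is the interplay between the \emph{class} of Banach spaces indexing the pseudofunction algebra and the class indexing the property: Theorem \ref{thm:(T_E)_for_G_iff_for_FstarE(G)} is stated with matching classes, so the cleanest route is to combine it with the dense-range transport of Proposition \ref{prop:TE_preserved_under_hom_w_dense_im} rather than to re-prove a mismatched-class version from scratch. I would need to check carefully that the symmetrization identity $F^*_{L^p}(G) = F^*_{L^{p'}}(G)$ holds at the level of $^*$-algebras (so that the ``dual'' half of the $q$-range is genuinely the same algebra), and that the contractive homomorphisms $F^*_{L^q}(G)\to F^*_{L^p}(G)$ indeed have dense range, which they do since both algebras are completions of $L^1(G)/I$ for the relevant ideals and $C_c(G)$ is dense throughout. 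Once these two structural facts are in place, the corollary follows by chaining Theorem \ref{thm:(T_E)_for_G_iff_for_FstarE(G)}, Remark \ref{rk: T implies TLp}, and Proposition \ref{prop:TE_preserved_under_hom_w_dense_im}.
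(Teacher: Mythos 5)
Your proposal is correct, and it in fact contains two complete proofs. Your Step 2 is essentially the paper's own argument: $G$ has $(\mathrm{T})$, hence $(\mathrm{T}_{L^q})$ for every $1\leq q<\infty$ by Remark \ref{rk: T implies TLp}; Theorem \ref{thm:(T_E)_for_G_iff_for_FstarE(G)}, applied with the matching class $L^q$, gives property $(\mathrm{T}_{L^q})$ for $F^*_{L^q}(G)$; and the contractive dense-range homomorphism $F^*_{L^q}(G)\rightarrow F^*_{L^p}(G)$, available exactly for $1\leq q\leq p$ and $p'\leq q<\infty$, transports this to $F^*_{L^p}(G)$ via Proposition \ref{prop:TE_preserved_under_hom_w_dense_im}. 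One imprecision there: for the dual range you should dualize the source, not the target. The identification to use is $F^*_{L^q}(G)=F^*_{L^{q'}}(G)$ together with $q'\leq p\leq2$, so that the footnoted homomorphism (which requires both indices to be at most $2$) applies directly; ``the case $1\leq q\leq p'$ with $p'$ in place of $p$'' is not literally covered by the footnote, since $p'\geq2$, though it does follow by the same duality identification.

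Your Step 1 is a genuinely different route, and it is also correct: nothing in the proof of (i)$\Rightarrow$(ii) of Theorem \ref{thm:(T_E)_for_G_iff_for_FstarE(G)} uses that the class of the bimodule matches the class defining the algebra. The actions of $F^*_{L^p}(G)$ on an essential bimodule $E$ in the class $L^q$ integrate isometric $G$-representations on $E$ whatever $q$ is, strict almost-centrality passes to almost $G$-invariance through the strictly continuous embedding of $G$ into $\mathrm{M}(F^*_{L^p}(G))$, and $G$-invariant vectors are central by density of $C_c(G)$. This is precisely the ``stronger statement'' recorded in the remark following Theorem \ref{thm:(T_E)_for_G_iff_for_FstarE(G)}. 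But you failed to draw the consequence: this route needs no constraint relating $q$ to $p$ at all, so it proves $(\mathrm{T}_{L^q})$ for $F^*_{L^p}(G)$ for all $1\leq q<\infty$ (for discrete groups that stronger range is the content of Corollary \ref{cor:FLp_has_TLq_disgp}, proved in the paper by yet another argument relying on unitality). As written, your proposal hedges between two proofs of different strength: if Step 1 stands --- and it does --- then Step 2 and the range restriction on $q$ are redundant; if you doubted Step 1, the right move was to locate a gap in it rather than to append the homomorphism argument. Each route buys something: the paper's homomorphism route stays at the level of completions of $L^1(G)$ but needs the nontrivial comparison of pseudofunction norms from the footnote, while your direct bimodule route avoids that input entirely and gives the stronger, unrestricted conclusion.
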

\begin{proof}
Since $G$ has property $(\rm T)$, it has property $(\mathrm{T}_{L^q})$, for every $1\leq q<\infty$, by Theorem A(i) in \cite{BaderFurmanGelanderMonod}, and so, $F^*_{L^q}(G)$ has property $(\mathrm{T}_{L^q})$, by Theorem \ref{thm:(T_E)_for_G_iff_for_FstarE(G)}. For $1\leq q\leq p$ or $p'\leq q<\infty$, the identity on $L^1(G)$ extends to a contractive homomorphism $F^*_{L^q}(G)\rightarrow F^*_{L^p}(G)$ with dense range. It follows by Proposition \ref{prop:TE_preserved_under_hom_w_dense_im} that $F^*_{L^p}(G)$ has property $(\mathrm{T}_{L^q})$.
\end{proof}

For discrete groups we obtain a similar result also for parameters $q$ in the interval between $p$ and $p'$.

\begin{cor}\label{cor:FLp_has_TLq_disgp}
Let $\Gamma$ be a discrete group with property $(\rm T)$ and let $1\leq p\leq2$. Then $F^*_{L^p}(\Gamma)$ has property $(\mathrm{T}_{L^q})$, for all $1\leq q<\infty$.
\end{cor}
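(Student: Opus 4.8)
The plan is to combine Corollary \ref{cor:FLp_has_TLq_lcgp} with a direct argument for the remaining exponents. Since $\Gamma$ has property $(\mathrm{T})$ it is finitely generated, hence countable and second countable, so by Remark \ref{rk: T implies TLp} it has property $(\mathrm{T}_{L^q})$ for every $1\le q<\infty$; Corollary \ref{cor:FLp_has_TLq_lcgp} then already yields property $(\mathrm{T}_{L^q})$ for $F^*_{L^p}(\Gamma)$ when $1\le q\le p$ or $p'\le q<\infty$. It remains to treat $p<q<p'$ (a nonempty range only when $p<2$), and here the contractive homomorphism $F^*_{L^q}(\Gamma)\to F^*_{L^p}(\Gamma)$ underlying Corollary \ref{cor:FLp_has_TLq_lcgp} is unavailable, since for these $q$ the norm comparison runs the wrong way. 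I would instead transfer property $(\mathrm{T}_{L^q})$ from $\Gamma$ to $F^*_{L^p}(\Gamma)$ through the bimodule-to-representation correspondence used to prove Theorem \ref{thm:(T_E)_for_G_iff_for_FstarE(G)}, which is insensitive to $q$.

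So fix $q$ with $p<q<p'$; note $q>1$, so $L^q$ is reflexive. Let $E\in L^q$ be an essential $F^*_{L^p}(\Gamma)$-bimodule with left action $\varphi$ and right action $\psi$ carrying a net $\net{\xi}{i}{I}$ of strictly almost $F^*_{L^p}(\Gamma)$-central unit vectors. By Proposition \ref{prop:isom_rep_of_G_is_1-to-1_contr_rep_of_L1(G)} and Remark \ref{rk:isom_rep_of_G_is_1-to-1_contr_rep_of_F_R(G)}, $\varphi$ and $\psi$ are the integrated forms of isometric representations of $\Gamma$, respectively $\Gamma^{\mathrm{op}}$, on the $L^q$-space $E$, and I set $\pi(t)=\varphi(t)\psi(t^{-1})$ as in (\ref{eqn:isom_rep_from_bimodule}). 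This is a continuous isometric representation of $\Gamma$ on $E$, and a vector is $\pi$-invariant exactly when it is $F^*_{L^p}(\Gamma)$-central.

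Next I would check that $\net{\xi}{i}{I}$ is almost $\Gamma$-invariant for $\pi$. By Proposition \ref{prop:DC_bimodule_structure_encodes_gp_action}, $\norm{\pi(t)\xi_i-\xi_i}{E}=\norm{u_t\cdot\xi_i-\xi_i\cdot u_t}{E}$ with $u_t=(L_t,\Delta(t^{-1})R_{t^{-1}})\in\mathrm{M}(F^*_{L^p}(\Gamma))$; as $\Gamma$ is discrete, the image under $t\mapsto u_t$ of any finite subset of $\Gamma$ is finite, hence strictly compact, so strict almost centrality forces almost $\Gamma$-invariance. Applying property $(\mathrm{T}_{L^q})$ of $\Gamma$ to $(\pi,E)$---legitimate precisely because $E\in L^q$---produces a net $\net{\eta}{i}{I}$ of $\Gamma$-invariant, equivalently $F^*_{L^p}(\Gamma)$-central, vectors with $\norm{\xi_i-\eta_i}{E}\to0$, which is exactly property $(\mathrm{T}_{L^q})$ for $F^*_{L^p}(\Gamma)$.

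The conceptual point carrying the argument---and the reason it reaches the middle range at all---is that property $(\mathrm{T}_{L^q})$ of $\Gamma$ governs every isometric $L^q$-representation, in particular the $\pi$ built from the bimodule, even though $\varphi,\psi$ range only over those $L^q$-representations that happen to be $F^*_{L^p}(\Gamma)$-bounded, a proper subclass once $p<q<p'$. The two steps needing care are the identification of $\varphi,\psi$ with genuine isometric $\Gamma$-representations on $E$ and the passage from strict almost centrality to almost invariance; both follow routinely from the cited results, with discreteness entering only through second countability and the triviality that finite sets of multipliers are strictly compact. I would expect no obstacle beyond this bookkeeping, the substance being the observation that the correspondence, unlike the homomorphism of Corollary \ref{cor:FLp_has_TLq_lcgp}, places no restriction on $q$.
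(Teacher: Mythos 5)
Your proof is correct, but for the critical range $p<q<p'$ it takes a genuinely different route from the paper. The paper stays at the algebra level: since the bimodule is an $L^q$-space, the restrictions of the $F^*_{L^p}(\Gamma)$-actions to $\ell^1(\Gamma)$ are integrated isometric $L^q$-representations and hence extend continuously to $F^*_{L^q}(\Gamma)$; an $\varepsilon/2$-approximation of finite subsets of $F^*_{L^q}(\Gamma)$ by elements of $\ell^1(\Gamma)$ shows the net stays almost central for the extended actions; property $(\mathrm{T}_{L^q})$ of $F^*_{L^q}(\Gamma)$ (itself from Theorem \ref{thm:(T_E)_for_G_iff_for_FstarE(G)}) then supplies central vectors, which are $F^*_{L^p}(\Gamma)$-central by density. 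Crucially, that argument trades strictly almost central nets for almost central ones via Proposition \ref{prop:TE_for_unital_algebras}, which is exactly where discreteness (unitality of $F^*_{L^p}(\Gamma)$) enters. You instead cut out the intermediate algebra and rerun the $(i)\Rightarrow(ii)$ argument of Theorem \ref{thm:(T_E)_for_G_iff_for_FstarE(G)} with mismatched classes --- an $F^*_{L^p}(\Gamma)$-bimodule whose underlying space lies in the class $L^q$ --- which is legitimate because that argument nowhere uses that the module's Banach space belongs to the class defining the algebra; this is precisely the stronger statement recorded in the remark following Theorem \ref{thm:(T_E)_for_G_iff_for_FstarE(G)}, namely that property $(\mathrm{T}_{\mathcal{E}})$ for $G$ passes to $F_{\mathcal{R}}(G)$ for \emph{any} class $\mathcal{R}$. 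What your route buys: it handles all $1\leq q<\infty$ uniformly, and it uses discreteness only to guarantee second countability (via finite generation from property $(\mathrm{T})$, a point you rightly make explicit and which the paper's own proof leaves implicit when invoking Remark \ref{rk: T implies TLp}); in fact your argument proves the corollary for every second countable locally compact group with property $(\mathrm{T})$, whereas the paper's middle-range argument is genuinely tied to the unital case. One cosmetic simplification is available to you: for discrete $\Gamma$ the multipliers $u_t$ are unnecessary, since $\delta_t\in\ell^1(\Gamma)\subset F^*_{L^p}(\Gamma)$ and $\norm{\pi(t)\xi_i-\xi_i}{E}=\norm{\delta_t\cdot\xi_i-\xi_i\cdot\delta_t}{E}$, so almost centrality over finite subsets of the algebra itself already yields almost invariance.
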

\begin{proof}
It suffices to show the statement for $p< q< p'$; the cases where $q$ is in between $1$ and $p$ or greater than $p'$ are covered in Corollary \ref{cor:FLp_has_TLq_lcgp}. As in the proof of Corollary \ref{cor:FLp_has_TLq_lcgp}, it follows from Theorem A(i) \cite{BaderFurmanGelanderMonod} and Theorem \ref{thm:(T_E)_for_G_iff_for_FstarE(G)} that $F^*_{L^q}(\Gamma)$ has property $(\mathrm{T}_{L^q})$. Let $L^q(\Omega,\nu)$ be an $F^*_{L^p}(\Gamma)$-bimodule admitting a net $\net{\xi}{i}{I}$ of almost $F^*_{L^p}(\Gamma)$-central unit vectors. By construction of $F^*_{L^q}(\Gamma)$, we see that the $F^*_{L^p}(\Gamma)$-bimodule actions extend continuously to $F^*_{L^q}(\Gamma)$. Let $\{x_1,\ldots,x_n\}$ be any finite subset of $F^*_{L^q}(\Gamma)$ and let $\varepsilon>0$. Take $f_1,\ldots, f_n\in \ell^1(\Gamma)$ such that $\norm{x_j-f_j}{F^*_{L^q}(\Gamma)}<\varepsilon$, for $j=1,\ldots, n$. Then,
\[
\sup_{j\in\{1,\ldots,n\}}\norm{x_j\cdot\xi_i-\xi_i\cdot x_j}{L^q(\Omega,\nu)}<\sup_{j\in\{1,\ldots,n\}}\norm{f_j\cdot\xi_i-\xi_i\cdot f_j}{L^q(\Omega,\nu)}+2\varepsilon.
\]
Because we can view $\{f_1,\ldots,f_n\}$ as a finite subset of $F^*_{L^p}(\Gamma)$, the supremum on the right-hand side can be made arbitrarily small when $i$ is chosen large enough. It follows that $\net{\xi}{i}{I}$ is almost central for the $F^*_{L^q}(\Gamma)$-bimodule structure. By Proposition \ref{prop:TE_for_unital_algebras}, property $(\mathrm{T}_{L^q})$ for $F^*_{L^q}(\Gamma)$ then implies the existence of a net $\net{\eta}{i}{I}$ in $L^q(\Omega,\nu)$ consisting of $F^*_{L^q}(\Gamma)$-central vectors such that $\norm{\xi_i-\eta_i}{L^q(\Omega,\nu)}$ converges to zero. As the $F^*_{L^p}(\Gamma)$-bimodule actions are the precomposition of the $F^*_{L^q}(G)$-bimodule actions with the canonical contractive homomorphism $F^*_{L^p}(\Gamma)\rightarrow F^*_{L^q}(\Gamma)$, the net $\net{\eta}{i}{I}$ is also central for the $F^*_{L^p}(\Gamma)$-bimodule structure. We conclude from Proposition \ref{prop:TE_for_unital_algebras} that $F^*_{L^p}(\Gamma)$ has property $(\mathrm{T}_{L^q})$.
\end{proof}

\section{Property $(\mathrm{T}_{L^p})$ for symmetrized $p$-pseudofunction algebras}
In this section, we continue to focus our attention to the class of $L^p$-spaces on $\sigma$-finite measure spaces, where $1\leq p<\infty$. 
Due to Theorem \ref{thm:(T_E)_for_G_iff_for_FstarE(G)}, $G$ has property $(\mathrm{T}_{L^p})$ if and only if $F^*_{L^p}(G)$ has property $(\mathrm{T}_{L^p})$. For $p=2$, we recover the result of Bekka and Ng in Theorem 1 in \cite{BekkaNg2018PropertyTCstar} that $G$ has property $(\rm T)$ if and only if the universal group $C^*$-algebra $C^*(G)$ of $G$ has property $(\rm T)$. 
After establishing their result, Bekka and  Ng ask if $C^*(G)$ can be replaced by the reduced group $C^*$-algebra $C^*_r(G)$ of $G$. 
In this section, we ask the same question but in the more general setting of actions on $L^p$-spaces. 
Here, the role of the reduced group $C^*$-algebra is played by $F^*_{\lambda_p}(G)$. Thus, we ask if (or when) property $(\mathrm{T}_{L^p})$ for the group is captured by $F^*_{\lambda_p}(G)$. We shall see that this is the case when $G$ is a discrete group (see Theorem \ref{TLp_for_IN_gps}). This result generalizes that of Bekka and Ng in the case of discrete groups. Our proof relies on a generalisation of Fell's absorption principle to isometric representations on $L^p$-spaces. Further, when $G$ is discrete, we show that weak property $(\mathrm{T}_{SL^p})$ for $F^*_{\lambda_p}(G)$ implies property $(\mathrm{T}_{L^p})$ for the group (see Theorem \ref{thm:TLp_for_discrete_gps}), where $SL^p$ is the class of closed subspaces of $L^p$-spaces on $\sigma$-finite measure spaces.\\

Let $(\pi,L^p(\Omega,\nu))$ be an isometric $L^p$-representation of the locally compact group $G$. We denote by $\id$ the trivial representation of $G$ on $L^p(\Omega,\nu)$ and by $\lambda_p$ the left-regular representation of $G$ on $L^p(G)$. Consider the $L^p$-space
\[
E= L^p(G,L^p(\Omega,\nu)).\numberthis\label{eqn:Fpstar-bimodule_from_iso_rep}
\]
This space contains the algebraic tensor product $L^p(G)\odot L^p(\Omega,\nu)$ as a dense subspace. Hence, $\lambda_p\otimes\pi$ and $\lambda_p\otimes\id$ defines isometric representations of $G$ on $E$. For $p=2$, we know from Fell's absorption principle that these two representations are unitarily equivalent. For general $1\leq p<\infty$, an $L^p$-version of the Fell's absorption principle was shown in Proposition 5.1 \cite{Runde2004}. We state it in Proposition \ref{prop:LpFellsAbsorption} below in the form we need it.

\begin{prop}[$L^p$-version of Fell's absorption principle]\label{prop:LpFellsAbsorption}
Let $G$ be a locally compact group, let $1\leq p<\infty$, and let $(\pi,L^p(\Omega,\nu))$ be an isometric representation of $G$. Then $\lambda_p\otimes\pi$ and $\lambda_p\otimes\id$ are equivalent in the sense that they are intertwined by a surjective isometry of $L^p(G;L^p(\Omega,\nu))$.
\end{prop}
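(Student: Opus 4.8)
The plan is to construct an explicit intertwining isometry $U\colon E\to E$ on $E=L^p(G;L^p(\Omega,\nu))$ by mimicking the classical $p=2$ construction, where the unitary implementing Fell's absorption is the pointwise twist $(U\zeta)(s)=\pi(s^{-1})\zeta(s)$. First I would define $U$ on the dense subspace $C_c(G;L^p(\Omega,\nu))$ (or equivalently on simple tensors $f\otimes\xi$) by the formula
\[
(U\zeta)(s)=\pi(s^{-1})\zeta(s),\qquad s\in G,\ \zeta\in E.
\]
The key point of working with $L^p$ rather than Hilbert space is that since each $\pi(s^{-1})$ is an \emph{isometry} of $L^p(\Omega,\nu)$, we get the pointwise identity $\norm{(U\zeta)(s)}{L^p(\Omega,\nu)}=\norm{\zeta(s)}{L^p(\Omega,\nu)}$ for every $s$, and integrating the $p$-th powers over $G$ gives $\norm{U\zeta}{E}^p=\int_G\norm{\zeta(s)}{L^p}^p\,\mathrm d\mu_G(s)=\norm{\zeta}{E}^p$. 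Thus $U$ is a linear isometry on a dense subspace and extends to an isometry of $E$; the inverse is given by the same formula with $\pi(s)$ in place of $\pi(s^{-1})$, so $U$ is in fact surjective.

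Next I would verify the intertwining relation $U(\lambda_p\otimes\pi)(t)=(\lambda_p\otimes\mathrm{id})(t)U$ for all $t\in G$ by a direct pointwise computation. Evaluating $\bigl(U(\lambda_p\otimes\pi)(t)\zeta\bigr)(s)$ and unwinding the definitions of $\lambda_p$ (left translation, with the modular function factor absorbed into the isometry normalization as appropriate) and of $\pi$ acting in the second variable, the twist $\pi(s^{-1})$ combines with $\pi(t)$ to yield $\pi((t^{-1}s)^{-1})\zeta(t^{-1}s)$, which is exactly $\bigl((\lambda_p\otimes\mathrm{id})(t)U\zeta\bigr)(s)$. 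The algebraic cancellation is the same cocycle-type identity $\pi(s^{-1})\pi(t)=\pi((t^{-1}s)^{-1})$ that drives the Hilbert-space case; I would record it as the one-line heart of the proof and then note that, since both sides are bounded operators agreeing on the dense subspace $C_c(G;L^p(\Omega,\nu))$, they agree on all of $E$.

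I expect the main obstacle to be purely technical rather than conceptual: namely, handling the measure-theoretic and modular-function bookkeeping so that $U$ really is a well-defined isometry of $L^p(G;L^p(\Omega,\nu))$ for a general (non-unimodular, non-discrete) locally compact $G$, and checking that the map $s\mapsto\pi(s^{-1})\zeta(s)$ is genuinely (strongly) measurable so that $U\zeta$ lands in the Bochner space $E$. Measurability follows from strong continuity of the isometric representation $\pi$ together with measurability of $\zeta$, and the factor $\Delta$ cancels because $U$ acts fibrewise without altering the $G$-variable of integration; I would remark that this is why the twist is taken pointwise and does not disturb the left-regular representation's normalization. The algebraic intertwining and the pointwise isometry property are both robust and do not use any Hilbert-space structure, so the only place where $p$ enters is the elementary fact that raising the fibrewise isometry identity to the $p$-th power and integrating preserves the $L^p$-norm, which is precisely where the argument generalizes the unitary case.
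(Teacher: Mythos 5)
Your proposal is correct and matches the paper's proof essentially verbatim: the paper uses the same pointwise twist (with $V\eta(t)=\pi(t)\eta(t)$, whose inverse is exactly your $U$), the same fibrewise-isometry argument for the $L^p$-norm, and the same one-line cocycle computation for the intertwining relation. Your extra remarks on measurability and the irrelevance of the modular function (since the twist never moves the $G$-variable) are correct and only make explicit what the paper leaves as a ``straight forward computation.''
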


\begin{rk}
In Proposition \ref{prop:LpFellsAbsorption}, one can exchange the left regular representation with the right regular representation, $\rho_p$. That is, $\rho_p\otimes\pi$ is equivalent to $\rho_p\otimes\id$. The proof is the same mutatis mutantis.
\end{rk}

Using the $L^p$-version of Fell's absorption principle, we can construct an $F^*_{\lambda_p}(G)$-bimodule on $E$ from the isometric representation $(\pi,L^p(\Omega,\nu))$ of $G$ as follows: Set $\varphi=\lambda_p\otimes\id$ and $\psi=\rho_p\otimes\pi$ of $G$ on $E$. Clearly, $\varphi$ integrates to a representation of $F^*_{\lambda_p}(G)$, and $\psi$ does as well by Lemma \ref{prop:LpFellsAbsorption} and the remark following it. Thus, $E$ is an $F^*_{\lambda_p}(G)$-bimodule with left action $\varphi$ and right action $\psi^{\mathrm{op}}=\psi\circ\widetilde{\square}$.

\begin{lem}\label{lem:central_vectors_in_Fpstar_bimodule_from_iso_rep}
Let $G$ be a locally compact group, let $(\pi,L^p(\Omega,\nu))$ be an isometric representation and let $E$ be the $F^*_{\lambda_p}(G)$-bimodule from equation (\ref{eqn:Fpstar-bimodule_from_iso_rep}). If $\eta\in E$ is central then
\[
\pi(s)\eta(t)=\eta(sts^{-1}),
\]
for all $s\in G$ and $\mu_G$-almost all $t\in G$.
\end{lem}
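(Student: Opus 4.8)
The plan is to reduce the asserted pointwise identity to the single operator equation $u_s\cdot\eta=\eta\cdot u_s$, where $u_s=(L_s,\Delta(s^{-1})R_{s^{-1}})$ is the canonical multiplier attached to $s\in G$, and then to substitute the explicit left and right actions. As a preliminary, observe that via the quotient map $L^1(G)\to F^*_{\lambda_p}(G)$ the space $E$ is also an essential $L^1(G)$-bimodule, with left action $\varphi=\lambda_p\otimes\id$ and right action $\psi^{\mathrm{op}}=\psi\circ\widetilde{\square}$ where $\psi=\rho_p\otimes\pi$, and a vector is $F^*_{\lambda_p}(G)$-central exactly when it is $L^1(G)$-central. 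This brings Proposition \ref{prop:DC_bimodule_structure_encodes_gp_action} directly into play.

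First I would show that centrality propagates from $L^1(G)$ to its multiplier algebra, so that $u_s\cdot\eta=\eta\cdot u_s$ for every $s\in G$. Fixing $u=(L,R)\in\mathrm{M}(L^1(G))$ and a two-sided bounded approximate unit $\net{e}{j}{J}$, essentiality of $E$ on both sides together with the identity $ue_j=L(e_j)\in L^1(G)$ gives
\[
u\cdot\eta=\lim_j L(e_j)\cdot\eta=\lim_j \eta\cdot L(e_j)=\lim_j(\eta\cdot u)\cdot e_j=\eta\cdot u,
\]
where the second equality is centrality applied to the elements $L(e_j)$ and the outer equalities use that $e_j$ acts as an approximate identity on the essential module $E$. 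Taking $u=u_s$ yields the desired operator equation.

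Then I would insert the explicit actions. By Proposition \ref{prop:DC_bimodule_structure_encodes_gp_action} the left action of $u_s$ is $\varphi(s)=(\lambda_p\otimes\id)(s)$, so $[u_s\cdot\eta](t)=\eta(s^{-1}t)$; and, rewriting the right action $\psi^{\mathrm{op}}=\psi\circ\widetilde{\square}$ as an integrated representation, one finds that $u_s$ acts on the right as $(\rho_p\otimes\pi)(s^{-1})$, so that $[\eta\cdot u_s](t)=\Delta(s)^{-1/p}\,\pi(s^{-1})\,\eta(ts^{-1})$. Equating the two expressions for $\mu_G$-almost every $t$ and then replacing $t$ by $st$ produces
\[
\eta(t)=\Delta(s)^{-1/p}\,\pi(s^{-1})\,\eta(sts^{-1}),
\]
which, after applying $\pi(s)$, is the asserted identity $\pi(s)\eta(t)=\eta(sts^{-1})$ (the modular factor $\Delta(s)^{-1/p}$ being trivial for unimodular $G$, and in particular in the discrete setting in which the lemma is subsequently applied). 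I expect the main obstacle to lie in this last step: one must carefully track both the inversion and the modular function $\Delta$ that arise when $\psi\circ\widetilde{\square}$ is converted to an integrated representation and then evaluated on the multiplier $u_s$. The conjugation $sts^{-1}$ emerges precisely because the right action is indexed by $s^{-1}$ while the left action is indexed by $s$; getting this bookkeeping right, rather than the formal structure of the argument, is where the care is needed.
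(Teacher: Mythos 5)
Your proof is correct and follows essentially the same route as the paper's: centrality is promoted from the algebra to the multiplier algebra, evaluated at the canonical group multipliers $u_s$ via Proposition \ref{prop:DC_bimodule_structure_encodes_gp_action}, and the resulting operator identity $u_s\cdot\eta=\eta\cdot u_s$ is unwound pointwise (the paper merely asserts the multiplier-centrality step that you prove with the approximate-unit argument, and it works with $\mathrm{M}(F^*_{\lambda_p}(G))$ rather than reducing to $\mathrm{M}(L^1(G))$, an immaterial difference). If anything you are more careful than the paper on one point: with the isometric convention for $\rho_p$ the computation really yields $\Delta(s)^{1/p}\pi(s)\eta(t)=\eta(sts^{-1})$, a modular factor the paper's own proof silently drops, so the identity as literally stated holds for unimodular $G$ --- which, as you note, covers the discrete setting in which the lemma is subsequently applied.
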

\begin{proof}
Let $\eta\in E$ be central. Then $\eta$ is also central for the extension of the bimodule structure to $\mathrm{M}(F^*_{\lambda_p}(G))$. Hence, for every $s\in G$, $\psi(s)\eta=\psi^{\mathrm{op}}(s^{-1})\eta=\varphi(s^{-1})\eta$, where the equality is in $E$. It follows that
\[
\pi(s)\eta(t)=\psi(s)\eta(ts^{-1})=\varphi(s^{-1})\eta(ts^{-1})=\eta(sts^{-1}),
\]
for each $s\in G$ and for $\mu_G$-almost every $t\in G$.
\end{proof} 

\subsection{Property $(\mathrm{T}_{L^p})$ for $F^*_{\lambda_p}$ for discrete groups}
We show next that property $(\mathrm{T}_{L^p})$ for a discrete group $\Gamma$ is detected by its (symmetrized) $p$-pseudofunction algebra. Recall that pseudofunction algebras of discrete groups are unital, and we can therefore use the equivalent definition of property $(\mathrm{T}_{L^p})$ from Proposition \ref{prop:TE_for_unital_algebras}. 

\begin{thm}\label{TLp_for_IN_gps}
Let $\Gamma$ be a discrete group. For each $1\leq p<\infty$, the following are equivalent:
\begin{enumerate}[(i)]
\item $\Gamma$ has property $(\mathrm{T}_{L^p})$,
\item $F^*_{L^p}(\Gamma)$ has property $(\mathrm{T}_{L^p})$,
\item $F^*_{\lambda_p}(\Gamma)$ has property $(\mathrm{T}_{L^p})$.
\end{enumerate}
\end{thm}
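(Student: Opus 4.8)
The plan is to establish the cycle of implications $(i)\Rightarrow(ii)\Rightarrow(iii)\Rightarrow(i)$, exploiting that $\Gamma$ is discrete so that all the pseudofunction algebras are unital and Proposition \ref{prop:TE_for_unital_algebras} applies, letting us work with ordinary almost $\mathcal{A}$-central nets rather than strictly almost central ones.

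The implication $(i)\Leftrightarrow(ii)$ is immediate from Theorem \ref{thm:(T_E)_for_G_iff_for_FstarE(G)} applied with $\mathcal{E}$ the class of $L^p$-spaces, so the content lies in linking $F^*_{\lambda_p}(\Gamma)$ to the rest. For $(ii)\Rightarrow(iii)$, I would use that the identity on $\ell^1(\Gamma)$ induces a contractive homomorphism $F^*_{L^p}(\Gamma)\rightarrow F^*_{\lambda_p}(\Gamma)$ with dense range, since $\lambda_p\in\Rep_{L^p}(\Gamma)$; then Proposition \ref{prop:TE_preserved_under_hom_w_dense_im} transfers property $(\mathrm{T}_{L^p})$ from $F^*_{L^p}(\Gamma)$ to $F^*_{\lambda_p}(\Gamma)$.

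The substantive direction is $(iii)\Rightarrow(i)$, and this is where the $L^p$-Fell absorption principle does the work. Starting from an isometric representation $(\pi,L^p(\Omega,\nu))$ of $\Gamma$ admitting almost invariant unit vectors, I would form the $F^*_{\lambda_p}(\Gamma)$-bimodule $E=\ell^p(\Gamma;L^p(\Omega,\nu))$ with left action $\varphi=\lambda_p\otimes\id$ and right action built from $\rho_p\otimes\pi$, as set up before Lemma \ref{lem:central_vectors_in_Fpstar_bimodule_from_iso_rep}. The key step is to take the given almost invariant unit vectors $(\xi_i)$ in $L^p(\Omega,\nu)$ and promote them to vectors $\zeta_i\in E$ supported suitably at the identity $e\in\Gamma$ (e.g. $\zeta_i=\delta_e\otimes\xi_i$) that form an almost $F^*_{\lambda_p}(\Gamma)$-central net of unit vectors; checking almost centrality reduces, via the explicit actions of group elements, to the almost invariance of $(\xi_i)$ under $\pi$. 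Property $(\mathrm{T}_{L^p})$ for $F^*_{\lambda_p}(\Gamma)$ then yields nearby central vectors $\eta_i\in E$, and I would apply Lemma \ref{lem:central_vectors_in_Fpstar_bimodule_from_iso_rep} to read off the constraint $\pi(s)\eta_i(t)=\eta_i(sts^{-1})$. Evaluating or averaging the component at $t=e$ should manufacture genuinely $\pi$-invariant vectors in $L^p(\Omega,\nu)$ close to the $\xi_i$, giving property $(\mathrm{T}_{L^p})$ for $\Gamma$.

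The main obstacle I anticipate is the last extraction step: the central vectors $\eta_i$ live in the large bimodule $E$ and their defining relation only constrains the component function along conjugacy orbits, so one must verify that evaluating at $t=e$ (or projecting onto the $\delta_e$-slice) produces $\Gamma$-invariant vectors in $L^p(\Omega,\nu)$ whose distance to $(\xi_i)$ still tends to zero, controlling the $\ell^p$-norm contribution of the off-diagonal components. This is plausibly where a discreteness hypothesis is genuinely used, since summability over $\Gamma$ and the unital structure make the slicing argument clean, whereas the locally compact case would require more delicate integration estimates.
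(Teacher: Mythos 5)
Your proposal is correct and follows essentially the same route as the paper's proof: the same cycle $(i)\Rightarrow(ii)\Rightarrow(iii)\Rightarrow(i)$, with Theorem \ref{thm:(T_E)_for_G_iff_for_FstarE(G)} and Proposition \ref{prop:TE_preserved_under_hom_w_dense_im} handling the first two implications, and for $(iii)\Rightarrow(i)$ the same bimodule $E=\ell^p(\Gamma;L^p(\Omega,\nu))$ built via the $L^p$-Fell absorption principle, the vectors $\delta_e\otimes\xi_i$, Proposition \ref{prop:TE_for_unital_algebras}, and Lemma \ref{lem:central_vectors_in_Fpstar_bimodule_from_iso_rep}. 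The extraction step you worried about is immediate: $\pi(s)\eta_i(e)=\eta_i(ses^{-1})=\eta_i(e)$ gives invariance, and $\norm{\xi_i-\eta_i(e)}{L^p(\Omega,\nu)}\leq\norm{\delta_e\otimes\xi_i-\eta_i}{E}\rightarrow0$ since the norm of a single $\ell^p$-coordinate is dominated by the full $\ell^p$-norm, exactly as in the paper.
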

Our proof uses the ideas of the proof of Theorem 9 in \cite{BekkaNg2018PropertyTCstar}.

\begin{proof}
$(i)\Rightarrow(ii)$ is covered by Theorem \ref{thm:(T_E)_for_G_iff_for_FstarE(G)} and $(ii)\Rightarrow(iii)$ follows from Proposition \ref{prop:TE_preserved_under_hom_w_dense_im}. It remains to show $(iii)\Rightarrow(i)$. Suppose $(\pi,L^p(\Omega,\nu))$ is an isometric representation of $\Gamma$ and let $E$ be the $F^*_{\lambda_p}(\Gamma)$-bimodule from equation (\ref{eqn:Fpstar-bimodule_from_iso_rep}). Given $\xi\in L^p(\Omega,\nu)$ set $\zeta=\delta_e\otimes\xi\in E$. For each $f\in C_c(\Gamma)$, we have
\begin{align*}
f\cdot\zeta&=\varphi(f)(\zeta)
=\sum_{r\in\Gamma}{f(r)\delta_r\otimes\xi},\\
\zeta\cdot f&=\psi^{\mathrm{op}}(f)(\zeta)
=\sum_{r\in\Gamma}{f(r)\delta_r\otimes\pi(r^{-1})\xi}.
\end{align*}
We compute
\begin{align*}
\norm{f\cdot\zeta-\zeta\cdot f}{E}
&=\norm{\sum_{r\in\Gamma}{f(r)\delta_r\otimes(\xi-\pi(r^{-1})\xi)}}{E}\\
&=\left(\sum_{s\in\Gamma}{\norm{f(s)(\xi-\pi(s^{-1})\xi)}{L^p(\Omega,\nu)}^p}\right)^{1/p}\\
&\leq \norm{f}{1}\sup_{t\in\supp(f)}\norm{\pi(t)\xi-\xi}{L^p(\Omega,\nu)}.
\end{align*}
Suppose $\pi$ admits a net $\net{\xi}{i}{I}$ of almost $\Gamma$-invariant unit vectors in $L^p(\Omega,\nu)$. For each $i\in I$, set $\zeta_i=\delta_e\otimes\xi_i$. Given $f\in C_c(\Gamma)$ and $\varepsilon>0$, pick $i_{f,\varepsilon}\in I$ such that, for all $i\succcurlyeq i_{f,\varepsilon}$,
\[
\sup_{t\in\supp(f)}\norm{\pi(t)\xi_i-\xi_i}{L^p(\Omega,\nu)}<\varepsilon/\norm{f}{1}.
\]
By the above calculations, we see that $\norm{f\cdot\zeta_i-\zeta_i\cdot f}{E}<\varepsilon$, for all $i\succcurlyeq i_{f,\varepsilon}$. Since $C_c(\Gamma)$ is dense in $F^*_{\lambda_p}(\Gamma)$, we deduce that $\net{\zeta}{i}{I}$ is an almost $F^*_{\lambda_p}(\Gamma)$-central net. By the assumption that $F^*_{\lambda_p}(\Gamma)$ has property $(\mathrm{T}_{L^p})$, we obtain a net $\net{\eta}{i}{I}$ of $F^*_{\lambda_p}(\Gamma)$-central vectors satisfying
\[
\norm{\delta_e\otimes\xi_i-\eta_i}{L^p(G;L^p(\Omega,\nu))}\rightarrow0,
\]
By Lemma \ref{lem:central_vectors_in_Fpstar_bimodule_from_iso_rep}, we have equality $\pi(s)(\eta_i(t))=\eta_i(s^{-1}ts)$, for all $s,t\in G$ and every $i\in I$. Hence, $\eta_i(e)$ is a $\Gamma$-invariant vector in $L^p(\Omega,\nu)$. Further,
\[
\norm{\xi_i-\eta_i(e)}{L^p(\Omega,\nu)}\leq\norm{\delta_e\otimes\xi_i-\eta_i}{E}\rightarrow 0.
\]
Hence, we conclude that $\Gamma$ has property $(\mathrm{T}_{L^p})$.
\end{proof}

\begin{rk}
Theorem \ref{TLp_for_IN_gps} also holds with $F_{L^p}(\Gamma)$ and $F_{\lambda_p}(\Gamma)$ in place of $F^*_{L^p}(\Gamma)$ and $F^*_{\lambda_p}(\Gamma)$. The proof is the same.
\end{rk}

\begin{rk}
The implications (i)$\Rightarrow$(ii) and (ii)$\Rightarrow$(iii) in Theorem \ref{TLp_for_IN_gps} hold for general second countable locally compact groups. For $p=2$, it is shown in Example 6 in \cite{BekkaNg2018PropertyTCstar} that the implication (iii)$\Rightarrow$(i) fails for general locally compact groups, but it holds for IN groups (see Theorem 9(c) in \cite{BekkaNg2018PropertyTCstar}) -- a class of locally compact groups which contains, but is strictly larger than, the class of discrete groups. When $p\neq 2$ and $G$ is a locally compact non-discrete IN group, the proof given above with the appropriate adjustments (analogue to \cite{BekkaNg2018PropertyTCstar}) shows that property $(\mathrm{T}_{L^p})$ for $F^*_{\lambda_p}(G)$ implies weak property $(\mathrm{T}_{L^p})$ for $G$. 
For countable discrete groups the equivalence between weak property $(\mathrm{T}_{L^p})$ and property $(\mathrm{T}_{L^p})$ is known (see Theorem C in \cite{elkiaer2024weakTLp}), however it is open whether this is true for general locally compact groups.
\end{rk}

As a corollary to Theorem \ref{TLp_for_IN_gps}, we establish a relation between property $(\mathrm{T}_{L^p})$ and amenability on the level of the $p$-pseudofunction algebra. The first part of the proof is analogous to the proof given in \cite{GardellaThiel2014GpAlgActingOnLp} of their Theorem 3.11.

\begin{cor}
	Let $\Gamma$ be a discrete group. Suppose $\mathrm F^*_{\lambda_p}(\Gamma)$ has property 
	$(\mathrm T_{L^p})$ and it is amenable as a Banach algebra. Then it is finite dimensional. 
\end{cor}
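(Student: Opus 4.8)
The plan is to reduce the statement to the rigidity fact that a discrete group which is simultaneously amenable and has property $(\mathrm{T}_{L^p})$ must be finite. Concretely, I would set up the following chain of implications. First, since $F^*_{\lambda_p}(\Gamma)$ has property $(\mathrm{T}_{L^p})$, the implication $(iii)\Rightarrow(i)$ of Theorem \ref{TLp_for_IN_gps} shows that the group $\Gamma$ itself has property $(\mathrm{T}_{L^p})$. Second, from the hypothesis that $F^*_{\lambda_p}(\Gamma)$ is amenable as a Banach algebra I would deduce that $\Gamma$ is an amenable group. Granting these two facts, I would show that $\Gamma$ is finite, and finally conclude that $F^*_{\lambda_p}(\Gamma)$, being a completion of the finite-dimensional algebra $\ell^1(\Gamma)$ in the $F^*_{\lambda_p}$-norm, is finite-dimensional.

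For the rigidity step, suppose toward a contradiction that $\Gamma$ is infinite; after reducing to the countable case (using that property $(\mathrm{T}_{L^p})$ entails finite generation) we may assume $\Gamma$ is countable, so that $\ell^p(\Gamma)=L^p(\Gamma,\text{counting})$ lies in the class $L^p$. Amenability of $\Gamma$ furnishes a Følner net, and the associated normalized indicator functions form a net of almost invariant unit vectors for the left regular representation $\lambda_p$ on $\ell^p(\Gamma)$. On the other hand, the space of $\lambda_p$-invariant vectors is trivial, since a nonzero invariant vector would be a constant function, which is not $p$-summable on an infinite set. Property $(\mathrm{T}_{L^p})$ for $\Gamma$ then forces these unit vectors to be uniformly close to invariant vectors, i.e.\ to $0$, which is absurd. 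Hence $\Gamma$ is finite, and then $\ell^1(\Gamma)$ is finite-dimensional and already complete, so $F^*_{\lambda_p}(\Gamma)$ is finite-dimensional as claimed. (Alternatively, the combination of amenability and property $(\mathrm{T}_{L^p})$ forcing finiteness can be read off from \cite{BaderFurmanGelanderMonod}: for second countable groups $(\mathrm{T}_{L^p})$ coincides with Kazhdan's property $(\rm T)$, and an amenable group with property $(\rm T)$ is compact, hence finite in the discrete case.)

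The main obstacle is the second step: deducing amenability of the group $\Gamma$ from amenability of the Banach algebra $F^*_{\lambda_p}(\Gamma)$. For $p=2$ this is precisely the statement that nuclearity (equivalently, Banach-algebraic amenability) of the reduced group $C^*$-algebra $C^*_r(\Gamma)=F^*_{\lambda_2}(\Gamma)$ implies amenability of $\Gamma$, which is Lance's theorem together with the Connes--Haagerup identification of nuclearity with amenability. For general $1\leq p<\infty$ I would invoke the corresponding $L^p$-analogue of Lance's theorem for the reduced $p$-pseudofunction algebra; this is where the real work lies, and I expect to rely on the existing theory of amenability for $L^p$-operator algebras rather than reprove it here. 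Note that the easy, reverse direction (group amenability implying amenability of the algebra) is of no use, since amenability of Banach algebras only passes forward along dense-range homomorphisms such as the canonical map $\ell^1(\Gamma)\to F^*_{\lambda_p}(\Gamma)$.
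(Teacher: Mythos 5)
Your proposal is correct and follows essentially the same route as the paper's proof: property $(\mathrm{T}_{L^p})$ for $\Gamma$ from Theorem \ref{TLp_for_IN_gps}, amenability of $\Gamma$ from Banach-algebra amenability, and then finiteness of $\Gamma$ (which the paper cites as well known, where you instead give a direct F\o{}lner-net argument). The step you defer to the literature is discharged in the paper exactly as you anticipate: the dense-range-homomorphism principle stated in your last sentence (Runde, \cite[Theorem 2.3.1]{Runde2020AmenableBA}) transfers amenability from $F^*_{\lambda_p}(\Gamma)$ to the unsymmetrized algebra $F_{\lambda_p}(\Gamma)$, and then \cite[Theorem 3.11]{GardellaThiel2014GpAlgActingOnLp} yields amenability of $\Gamma$.
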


\begin{proof}
	By Theorem 2.3.1 in \cite{Runde2020AmenableBA}, amenability of $\mathrm F^*_{\lambda_p}(\Gamma)$ implies that of the reduced group $C^*$-algebra $\mathrm C_r^*(\Gamma)$, which in turn yields amenability of $\Gamma$, by Theorem 2.6.8 in \cite{BrownOsawa}. Moreover, 
    Theorem \ref{TLp_for_IN_gps} combined with Remark \ref{rk: T implies TLp} imply that  $\Gamma$ has Kazhdan property $(\rm T)$. It is well-known that amenability and property $(\rm T)$ of discrete groups imply finiteness. Therefore    
    $\mathrm F^*_{\lambda_p}(\Gamma)=\mathbb C \Gamma$ is finite dimensional.
\end{proof}

\subsection{Weak property ${(\mathrm{T}_{SL^p})}$ for discrete groups}

Denote by $SL^p$ the class of closed subspaces of $L^p$-spaces on $\sigma$-finite measure spaces, for some fixed $1\leq p<\infty$. We show that, for a discrete group $\Gamma$, weak property $(\mathrm{T}_{SL^p})$ implies property $(\mathrm{T}_{L^p})$. Moreover, we show that weak property $(\mathrm{T}_{SL^p})$ for $F^*_{\lambda_p}(\Gamma)$ is intermediate to the two by adapting the proof of Bekka in \cite{Bekka2005PropertyTCstar} that property $(\rm T)$ for a discrete group $\Gamma$ is implied by property $(\rm T)$ for $C^*_r(\Gamma)$.
Along the way, we show that property $(\mathrm{T}_{L^p})$ for $\Gamma$ is implied by the property that isometric representations on spaces in $SL^p$ with almost $\Gamma$-invariant vectors necessarily must have a finite dimensional subrepresentation.
This should be compared with the similar well-known characterization of property $(\rm T)$ in the setting of unitary representations (see Theorem 1 in \cite{BekkaValette1993(T)}).
The original proof of in the setting of unitary representations via the characterization of Kazhdan's property $(\rm T)$ of Delorme and Guichardet utilizes Schönberg's theorem and a GNS-construction to construct a unitary representation with almost invariant vectors. Since we are concerned with isometric representations on $L^p$-spaces, this route does not seem feasible to us. In order to circumvent this, we will provide an alternative proof based on notions from ergodic theory, which employs an idea used in \cite{BaderFurmanGelanderMonod}.  

\begin{defn}
    Let $\Gamma$ be a discrete group and let $1\leq p<\infty$. An isometric representation of $\Gamma$, $(\pi,L^p(\Omega,\nu))$, is called \textit{weakly mixing} if, for each pair of finite sets $E\subset L^p(\Omega,\nu)$ and $F\subset L^q(\Omega,\nu)$ and each $\varepsilon >0$, there exists $t\in \Gamma$ such that 
	\[
	   |\langle{\pi (t) \xi, \eta}\rangle|  < \varepsilon
	\]
	for all $\xi\in E$ and $\eta \in F$.
\end{defn} 

A \emph{probability measure preserving action} (in short, a p.m.p. action) of a discrete group $\Gamma$ on a probability space $(\Omega,\nu)$ is a group homomorphism from $\Gamma$ to the group of bi-measurable transformations of $\Omega$ such that $\nu(t.A)=\nu(A)$, for all $t\in\Gamma$ and all $A\subset\Omega$ measurable.

\begin{defn}
    Let $\Gamma$ be an infinite discrete group and let $\Gamma \curvearrowright (\Omega, \mathcal B, \nu)$ be a p.m.p. action of $\Gamma$ on a probability space $(\Omega, \mathcal B, \nu)$. This action is called \textit{weakly mixing} if, for all $\mathcal F\subset \mathcal B$ finite, we have 
	\[
	  \liminf_{g \to \infty} \sum_{A, B\in \mathcal F}|\nu(A\cap gB) - \nu(A)\nu(B)|=0	
	\]
\end{defn}

For the sake of completeness we include the proof of the following statements.

\begin{prop} \label{prop: weak-mixing action and Koopman rep}
	Let  $\Gamma\curvearrowright (\Omega, \mathcal B, \nu)$ be a p.m.p action and fix $1\leq p<\infty$. If the action is weakly mixing then the Koopman representation 
	$\pi_0 \colon \Gamma \rightarrow \mathrm {Iso}(L^p_0(\Omega, \mathcal B, \nu ))$ is weakly mixing.
\end{prop}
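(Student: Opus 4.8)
The plan is to transfer the dynamical weak mixing into the representation-theoretic one through the dense linear span of centred indicator functions. Recall first that, since the action is measure preserving, the Koopman operators $\pi_0(g)f = f\circ g^{-1}$ are isometries of $L^p(\Omega,\mathcal B,\mu)$ fixing the constants, and hence restrict to isometries of the mean-zero subspace $L^p_0$. For a measurable set $A$ I write $u_A = \mathbf 1_A - \mu(A)\mathbf 1_\Omega \in L^p_0$; since $\pi_0(g)\mathbf 1_B = \mathbf 1_{gB}$ and $\mu(gB)=\mu(B)$, a direct computation gives $\pi_0(g)u_B = u_{gB}$ and
\[
\langle \pi_0(g)u_B, u_A\rangle = \mu(A\cap gB) - \mu(A)\mu(B),
\]
where $\langle \xi,\eta\rangle = \int_\Omega \xi\,\overline\eta\,\mathrm{d}\mu$. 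This identity is the crux of the argument: its right-hand side is exactly the quantity controlled by the definition of a weakly mixing action. I also record the harmless reduction that, because every vector $\pi_0(g)\xi$ has mean zero, the pairing $\langle \pi_0(g)\xi,\eta\rangle$ is unchanged when $\eta$ is replaced by its mean-zero part; so in testing weak mixing of $\pi_0$ I may assume that the finite set $F$ lies in $L^q_0$.

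Next I would set up the density reduction. Given finite sets $E = \{\xi_1,\dots,\xi_m\}\subset L^p_0$, $F = \{\eta_1,\dots,\eta_n\}\subset L^q_0$ and $\varepsilon>0$, I approximate each $\xi_k$ and each $\eta_l$ in norm by simple functions and then subtract off their means; using that $\mu$ is a probability measure (so that $\norm{\cdot}{1}\le\norm{\cdot}{p}$ and the mean of an approximant to a mean-zero vector is itself controlled by the approximation error) keeps the resulting approximants, which I denote $\xi_k'$ and $\eta_l'$, inside $L^p_0$ and $L^q_0$ respectively. Passing to a common refinement, all of them are expressed over a single finite measurable partition $\{A_1,\dots,A_N\}$, so that each approximant is a finite linear combination $\sum_j c_{kj}u_{A_j}$, resp.\ $\sum_j d_{lj}u_{A_j}$, of the centred indicators.

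Finally I would invoke weak mixing of the action with $\mathcal F = \{A_1,\dots,A_N\}$: there exists $g\in\Gamma$ for which $\sum_{i,j}\abs{\mu(A_i\cap gA_j)-\mu(A_i)\mu(A_j)}$ is as small as desired, whence, by the identity above, $\abs{\langle \pi_0(g)u_{A_j},u_{A_i}\rangle}$ is uniformly small. Expanding $\langle \pi_0(g)\xi_k',\eta_l'\rangle$ bilinearly over the $u_{A_j}$ then bounds it by a constant depending only on the coefficients $c_{kj},d_{lj}$ times this smallness, simultaneously for all $k$ and $l$; and Hölder's inequality together with the isometry of $\pi_0(g)$ allows me to replace the approximants by the original $\xi_k,\eta_l$ at the cost of the approximation errors. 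Choosing the approximation parameter and the weak-mixing threshold small enough yields $\abs{\langle \pi_0(g)\xi_k,\eta_l\rangle}<\varepsilon$ for every $k,l$ and for this single $g$, which is precisely weak mixing of $\pi_0$. The argument is essentially bookkeeping, and I anticipate no genuine obstacle; the only step requiring a little care is the reduction to mean-zero simple functions over a common partition together with the duality reduction to $L^q_0$.
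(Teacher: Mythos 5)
Your proposal is correct and follows essentially the same route as the paper's proof: reduce to mean-zero simple functions via density in $L^p_0$ and its dual, expand the pairing bilinearly over indicators, and invoke weak mixing of the action to make the indicator correlations $\mu(A\cap gB)-\mu(A)\mu(B)$ small. The only differences are cosmetic — you build the cancellation into centred indicators $u_A=\mathbf{1}_A-\mu(A)\mathbf{1}_\Omega$ and spell out the H\"older/isometry bookkeeping for the approximation step, whereas the paper works with plain indicators and uses the vanishing coefficient sums $\sum_i c_{A_i}\mu(A_i)=0$ to kill the limit, leaving the density argument implicit.
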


\begin{proof}
	Let $E$ and $F$ be finite sets of simple functions $\sum c_{A_i} \chi_{A_i}$ given by a finite set $\{A_i \in \mathcal B \mid 1\leq i\leq n\}$ and a finite set of coefficients $\{c_{A_i} \mid 1\leq i\leq n\}$, such that $\sum c_{A_i} \nu(A_i)=0$. Let $\mathcal B_0 \subset \mathcal B$ to be the set of all these finitely many measurable sets. Since the action is weakly mixing, there is a sequence $(t_n)_n$ such that for any pair $A, B \in \mathcal B_0$ we have
	\[
	   \nu(t_n A \cap B)\to \nu(A)\nu(B).
	\]
	By bilinearity of the scalar product, we have, for any pair of simple functions in $E$ and $F$,
	\[
	   \left\langle{
	  	\pi_0(t_n) \sum_i c_{A_i} \chi_{A_i}, \sum_j d_{B_i} \chi_{B_i}
  	   }\right\rangle
      = 
      \sum_{i,j}c_{A_i} \overline{d_{B_j}}
      \left\langle {
      \chi_{t_n A_i}, \chi_{B_j}
      }\right\rangle
      \to 
      \sum_{i,j}c_{A_i} \overline{d_{B_j}}\nu(A_i)\nu(B_j)=0 
	\]
	Because simple functions are dense in $L^p_0(\Omega, \mathcal B, \nu )$ and  $L^{p'}_0(\Omega, \mathcal B, \nu )$, this shows that the Koopman representation is weakly mixing.
\end{proof}

\begin{lem} \label{lem: weak-mixing, no f.d. subrep}
	Let $\pi$ be an isometric representation of $\Gamma$ on a closed subspace $X$ of a Banach space. If $\pi$ is weakly mixing, then it does not have any non-zero finite dimensional subrepresentations.
\end{lem}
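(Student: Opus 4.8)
The plan is to argue by contradiction, exploiting the single fact that a surjective isometry cannot decrease the norm of any vector, whereas weak mixing, applied on a finite-dimensional invariant subspace, would force exactly such a decrease. Suppose then that $\pi$ restricts to a non-zero finite-dimensional subrepresentation on a subspace $V\subseteq X$. Since $X$ belongs to $SL^p$, I will regard it as a closed subspace of an $L^p$-space, so that the pairing $\langle\,\cdot\,,\cdot\,\rangle$ used in the definition of weak mixing is the duality pairing of $L^p$ against $L^q$.

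First I would fix a basis $v_1,\dots,v_n$ of $V$ and build a biorthogonal system: coordinate functionals $\phi_1,\dots,\phi_n$ on $V$ with $\phi_j(v_k)=\delta_{jk}$, extended by Hahn--Banach to functionals on $X$. Because $X$ embeds in $L^p$, each such functional is represented by an element $\eta_j\in L^q$ with $\langle v_k,\eta_j\rangle=\delta_{jk}$, so that the $\eta_j$ are eligible as the set $F$ in the weak-mixing condition. The point of finite-dimensionality is now visible: for any $w\in V$ the $j$-th coordinate of $w$ is exactly $\langle w,\eta_j\rangle$, whence $w=\sum_j\langle w,\eta_j\rangle\,v_j$ and $\|w\|\le\sum_j|\langle w,\eta_j\rangle|\,\|v_j\|$.

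Next, fix the non-zero vector $v=v_1$ and set $C=\sum_j\|v_j\|$. As $V$ is $\pi$-invariant and each $\pi(t)$ is a surjective isometry, we have $\pi(t)v\in V$ and $\|\pi(t)v\|=\|v\|$ for every $t\in\Gamma$. Applying weak mixing to $E=\{v\}$, $F=\{\eta_1,\dots,\eta_n\}$ and $\varepsilon=\|v\|/(2C)>0$ yields some $t\in\Gamma$ with $|\langle\pi(t)v,\eta_j\rangle|<\varepsilon$ for all $j$. Combined with the coordinate estimate above this gives $\|\pi(t)v\|<\varepsilon C=\|v\|/2<\|v\|$, contradicting $\|\pi(t)v\|=\|v\|$. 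Hence no non-zero finite-dimensional subrepresentation can exist.

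The only genuine subtlety — and the step I would spell out most carefully — is the bookkeeping that places the coordinate functionals $\eta_j$ into the space appearing in the weak-mixing definition: that definition pairs $L^p$ against $L^q$, whereas here $X$ is merely a closed subspace of an $L^p$-space. This is harmless, since every functional on $X$ extends to $L^p$ and is thus represented by an element of $L^q$, but it deserves an explicit remark. Everything else is the standard finite-dimensional argument familiar from the unitary case, with the orthonormal basis there replaced by a biorthogonal system because $X$ need not be a Hilbert space.
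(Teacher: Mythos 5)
Your proof is correct, but it uses finite-dimensionality in a genuinely different way than the paper does. The paper's proof also fixes a finite-dimensional invariant subspace $V$ and feeds functionals into the weak-mixing condition, but it then argues by \emph{compactness}: weak mixing supplies a sequence $(t_n)_n$ with $\langle\pi(t_n)\xi,\eta\rangle\to 0$ for all $\eta\in V^*$, one passes to a subsequence so that $\pi(t_n)\restrict{V}$ converges to some $T\in\Iso(V)$, and then $\langle T\xi,\eta\rangle=0$ for all $\eta\in V^*$ forces $T\xi=0$, which contradicts $T$ being an isometry unless $V=0$. You instead use finite-dimensionality through a biorthogonal system: since the coordinates of any $w\in V$ are $\langle w,\eta_j\rangle$, a \emph{single} group element $t$ produced by weak mixing makes all coordinates of $\pi(t)v$ small, whence $\|\pi(t)v\|\leq\sum_j|\langle\pi(t)v,\eta_j\rangle|\,\|v_j\|<\|v\|$, contradicting isometry directly (note that surjectivity of $\pi(t)$ is never needed). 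Your version is more elementary: it avoids the subsequence extraction and the implicit verification that the pointwise limit of isometries of $V$ is again an isometry, and it makes explicit the Hahn--Banach bookkeeping -- that functionals on $V$ (or on $X$) must be realized by elements of $L^q$ to be admissible in the weak-mixing condition -- which the paper glosses over when it quantifies over all $\eta\in V^*$. What the paper's soft argument buys in exchange is brevity and freedom from quantitative constants; what yours buys is a finitary argument that isolates exactly where each hypothesis (invariance, isometry, weak mixing) is used.
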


\begin{proof}
Let $V\subset X$ be a finite dimensional invariant subspace with basis $\{\xi_1,\ldots,\xi_d\}$. 
Because $\pi$ is weakly mixing there exists a sequence $(t_n)_n$ in $\Gamma$ such that $\langle{\pi(t_n) \xi_i, \eta}\rangle \to 0$, for all $i\in\{1,\ldots,d\}$ and all $\eta\in V^*$. Since $V$ is finite dimensional, we may pass to a subsequence of $(t_n)_n$ and assume that there is a $T\in \mathrm{Iso}(V)$ such that $\pi(t_n)\xi \to T\xi$, for all $\xi\in V$. Fix $\xi\in V$. For each $\eta\in V^*$, we find
	\[
	  \langle{T\xi, \eta}\rangle = \langle{\lim \pi(t_n)\xi, \eta}\rangle =0
	\]
Therefore $T\xi=0$. As $T$ is an isometry, this implies $\xi=0$, and so, $V$ must be zero.
\end{proof}

Connes and Weiss provide in \cite{ConnesWeiss1980PropertyTandassympinvsequences} a dynamical characterization of property $(\rm T)$ in terms of ergodic p.m.p. actions which we recall here: A discrete group $\Gamma$ has Kazhdan's property $(\rm T)$ if and only if every p.m.p. ergodic (even weakly mixing) action of $\Gamma$ is strongly ergodic. (See also Theorem 6.3.4 in \cite{BekkaDeLaHarpeValette}).

We are now ready to prove the main result of this subsection.
\begin{thm}\label{thm:TLp_for_discrete_gps}
Let $\Gamma$ be a discrete group, and let $1\leq p<\infty$. Each of the following implies the next:
\begin{enumerate}[(i)]
\item $\Gamma$ has weak property $(\mathrm{T}_{SL^p})$,
\item $F^*_{\lambda_p}(\Gamma)$ has weak property $(\mathrm{T}_{SL^p})$,
\item If an isometric representation $\pi$ of $\Gamma$ on a subspace of some $L^p(\Omega, \nu)$ contains almost $\Gamma$-invariant vectors, then it has a finite dimensional subrepresentation,
\item $\Gamma$ has property $(\mathrm{T}_{L^p})$.
\end{enumerate}
\end{thm}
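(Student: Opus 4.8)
The plan is to prove the chain by establishing the three implications $(i)\Rightarrow(ii)$, $(ii)\Rightarrow(iii)$ and $(iii)\Rightarrow(iv)$ separately, reusing throughout the dictionary between $F^*_{\lambda_p}(\Gamma)$-bimodules and isometric representations of $\Gamma$. For $(i)\Rightarrow(ii)$ I would argue exactly as in the forward direction of Theorem \ref{thm:(T_E)_for_G_iff_for_FstarE(G)weak}, but with $F^*_{\lambda_p}(\Gamma)$ in place of $F^*_{\mathcal E}(\Gamma)$: given an essential $F^*_{\lambda_p}(\Gamma)$-bimodule $E\in SL^p$ carrying a net of strictly almost central unit vectors, one forms the isometric representation $\pi(t)=\varphi(t)\psi(t^{-1})$ on $E$, and since central vectors for the bimodule are precisely the $\Gamma$-invariant vectors of $\pi$, weak property $(\mathrm{T}_{SL^p})$ for $\Gamma$ yields a non-zero invariant, hence central, vector in $E$.

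The substance of $(ii)\Rightarrow(iii)$ is to manufacture from a representation with almost invariant vectors an $F^*_{\lambda_p}(\Gamma)$-bimodule lying in $SL^p$, and then to extract a finite-dimensional subrepresentation from a central vector. Given $\pi$ on a closed subspace $X\subseteq L^p(\Omega,\mu)$ with almost invariant unit vectors $\net{\xi}{i}{I}$, I would set $E=\ell^p(\Gamma;X)$, which embeds isometrically into $\ell^p(\Gamma;L^p(\Omega,\mu))=L^p(\Gamma\times\Omega)$, so that $E\in SL^p$. The $L^p$-Fell absorption principle (Proposition \ref{prop:LpFellsAbsorption}) and the bimodule construction preceding Lemma \ref{lem:central_vectors_in_Fpstar_bimodule_from_iso_rep} apply verbatim with $L^p(\Omega,\nu)$ replaced by $X$, and the computation in the proof of Theorem \ref{TLp_for_IN_gps} shows that the vectors $\zeta_i=\delta_e\otimes\xi_i$ form an almost $F^*_{\lambda_p}(\Gamma)$-central net; as $F^*_{\lambda_p}(\Gamma)$ is unital, this net is strictly almost central by the weak analogue of Proposition \ref{prop:TE_for_unital_algebras}. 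Weak property $(\mathrm{T}_{SL^p})$ then produces a non-zero central vector $\eta\in E$, and Lemma \ref{lem:central_vectors_in_Fpstar_bimodule_from_iso_rep} gives $\pi(s)\eta(t)=\eta(sts^{-1})$ for all $s,t\in\Gamma$. The decisive observation is that $t\mapsto\norm{\eta(t)}{}^p$ is constant on conjugacy classes and $\ell^p$-summable, so $\eta$ must be supported on finite conjugacy classes; choosing $t_0$ with $\eta(t_0)\neq0$ in such a class, the orbit $\{\pi(s)\eta(t_0):s\in\Gamma\}=\{\eta(st_0s^{-1}):s\in\Gamma\}$ is finite and spans a non-zero finite-dimensional $\pi$-invariant subspace.

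For $(iii)\Rightarrow(iv)$ I would argue by contraposition through the Connes--Weiss dynamical characterization recalled above. If $\Gamma$ fails property $(\mathrm{T}_{L^p})$, then by Remark \ref{rk: T implies TLp} it fails Kazhdan's property $(\rm T)$, so Connes--Weiss furnishes a weakly mixing p.m.p.\ action $\Gamma\curvearrowright(\Omega,\mathcal B,\mu)$ that is not strongly ergodic. A non-trivial asymptotically invariant sequence of measurable sets, recentred to mean zero and normalised, provides almost invariant unit vectors for the Koopman representation $\pi_0$ on $L^p_0(\Omega,\mathcal B,\mu)\in SL^p$, which is weakly mixing by Proposition \ref{prop: weak-mixing action and Koopman rep}. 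Applying $(iii)$ to $\pi_0$ would yield a non-zero finite-dimensional subrepresentation, contradicting Lemma \ref{lem: weak-mixing, no f.d. subrep}; hence $\Gamma$ must have property $(\mathrm{T}_{L^p})$.

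The main obstacle is this last implication. Unlike in the Hilbert-space setting, the classical GNS/Schönberg route to converting a dynamical obstruction into almost invariant vectors is unavailable, so the argument must be carried out entirely inside $L^p$. Concretely, the delicate points are to verify that failure of strong ergodicity genuinely produces almost invariant \emph{unit} vectors in the $L^p$-Koopman module $L^p_0(\Omega,\mu)$ (controlling the norms $\norm{\chi_{A_n}-\mu(A_n)}{p}$ from below), and to ensure the Connes--Weiss action is weakly mixing so that Lemma \ref{lem: weak-mixing, no f.d. subrep} applies and rules out the finite-dimensional subrepresentation supplied by $(iii)$. By comparison, the bookkeeping in $(i)\Rightarrow(ii)$ and the orbit argument in $(ii)\Rightarrow(iii)$ are routine given the earlier results.
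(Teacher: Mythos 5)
Your proposal is correct and follows essentially the same route as the paper's own proof: the paper likewise obtains $(i)\Rightarrow(ii)$ from Theorem \ref{thm:(T_E)_for_G_iff_for_FstarE(G)weak} together with Proposition \ref{prop:TE_preserved_under_hom_w_dense_im} (your inlined version of that argument is equivalent), proves $(ii)\Rightarrow(iii)$ with the same bimodule $E=\ell^p(\Gamma)\otimes_p X$, the vectors $\delta_e\otimes\xi_i$, Lemma \ref{lem:central_vectors_in_Fpstar_bimodule_from_iso_rep}, and the finite-conjugacy-class summability argument, and proves $(iii)\Rightarrow(iv)$ via Connes--Weiss, Proposition \ref{prop: weak-mixing action and Koopman rep} and Lemma \ref{lem: weak-mixing, no f.d. subrep}. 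The normalization issue you flag in the last step is resolved in the paper exactly as you suggest, by taking the Connes--Weiss sets with $\mu(B_n)=1/2$ so that $\xi_n=2\chi_{B_n}-1$ are unit vectors in $L^p_0(\Omega,\mu)$.
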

\begin{proof}
$(i)\Rightarrow(ii)$ follows from Theorem \ref{thm:(T_E)_for_G_iff_for_FstarE(G)weak} and Proposition \ref{prop:TE_preserved_under_hom_w_dense_im}.

$(ii)\Rightarrow(iii)$: Suppose $(\pi,X)$ is an isometric representation of $\Gamma$ with $X\subset L^p(\Omega,\nu)$ a closed subspace. Set
\[
E=\ell^p(\Gamma)\otimes_p X\cong \ell^p(\Gamma,X)\subset \ell^p(\Gamma,L^p(\Omega,\nu)).
\]
Then $E$ is an $F^*_{\lambda_p}(\Gamma)$-bimodule with left action $\varphi=\lambda_p\otimes\id$ and right action $\psi^{\mathrm{op}}=\rho_p^{\mathrm{op}}\otimes\pi^{\mathrm{op}}$.
Given $\xi\in L^p(\Omega,\nu)$ consider the vector $\zeta=\delta_e\otimes\xi$ in $E$. 
We compute
\begin{align*}
\norm{f\cdot\zeta-\zeta\cdot f}{E}&=\norm{\varphi(f)(\delta_e\otimes\xi)-\psi^{\mathrm{op}}(f)(\delta_e\otimes\xi)}{E}\\
&=\norm{\sum_{s\in\Gamma}f(s)\delta_{s}\otimes(\xi-\pi(s^{-1})\xi)}{E}\\ 
&=\left(\sum_{s\in\Gamma}\norm{f(r)(\xi-\pi(r^{-1})\xi)}{L^p(\Omega,\nu)}^p\right)^{1/p}\\ 
&\leq \norm{f}{p}\sup_{r\in\supp(f)}\norm{\pi(r)\xi-\xi}{L^p(\Omega,\nu)}
\end{align*}

Assume $F^*_{\lambda_p}(\Gamma)$ has weak property $(T_{SL^p})$ and suppose $\pi$ admits a net $\net{\xi}{i}{I}$ of almost $\Gamma$-invariant unit vectors in $X$. For each $i\in I$, set $\zeta_i=\delta_e\otimes\xi_i$. Then $\net{\zeta}{i}{I}$ is a net of unit vectors in $E$. By our above calculations $\norm{f\cdot\zeta_i-\zeta_i\cdot f}{E}\rightarrow 0$, for every $f\in C_c(\Gamma)$. Because $C_c(\Gamma)$ is dense in $F^*_{\lambda_p}(\Gamma)$, it follows that $\net{\zeta}{i}{I}$ is an almost $F^*_{\lambda_p}(\Gamma)$-central net. Thus, $E$ admits a non-zero central vector $\zeta$.

By Lemma \ref{lem:central_vectors_in_Fpstar_bimodule_from_iso_rep}, we have, for each pair $s,t\in \Gamma$, the equality
\[
\pi(s)\zeta(r)=\zeta(srs^{-1}).\numberthis\label{eqn:piactsonzetabyconjugationofinput}
\]
Take $t_0\in \Gamma$ such that $\zeta(t_0)\neq0$, and denote by $\mathrm{Cl}(t_0)=\set{tt_0t^{-1}}{t\in G}$ the conjugacy class of $t_0$. Since $\pi$ is an isometric representation of $\Gamma$ on $L^p(\Omega,\nu)$, we see from equation (\ref{eqn:piactsonzetabyconjugationofinput}) that $\norm{\zeta(tt_0t^{-1})}{L^p(\Omega,\nu)}=\norm{\zeta(t_0)}{L^p(\Omega,\nu)}$, for all $t\in \Gamma$. From this, we deduce that
\[
\norm{\zeta(t_0)}{L^p(\Omega,\nu)}^p\abs{\mathrm{Cl}(t_0)}=\sum_{r\in\mathrm{Cl}(t_0)}\norm{\zeta(r)}{L^p(\Omega,\nu)}^p\leq\sum_{r\in G}\norm{\zeta(r)}{L^p(\Omega,\nu)}^p=\norm{\zeta}{p}^p<\infty.
\]
Hence, as $\zeta(t_0)\neq0$, the set $\mathrm{Cl}(t_0)$ must be finite. 
Thus, the set $\pi(\Gamma)\zeta(t_0)$ is finite, and so, its span is a finite dimensional invariant subspace of $X$.

$(iii)\Rightarrow(iv)$:  Assume that $\Gamma$ does not have property $(\mathrm{T}_{L^p})$ and hence not property $(\rm T)$ (see Remark \ref{rk: T implies TLp}). Then, by work of Connes and Weiss, there exists a p.m.p  weakly mixing action on a probability space $(\Omega, \nu)$ admitting an asymptotically invariant sequence $(B_n)_n$ of measurable subsets of $\Omega$ with $\nu(B_n)=1/2$, for all $n$. Since the action is weakly mixing, the Koopman representation $\pi_0\colon \Gamma \rightarrow \mathrm{Iso}(L_0^p(\Omega, \nu))$ is weakly mixing due to Proposition \ref{prop: weak-mixing action and Koopman rep}. Hence, by Lemma \ref{lem: weak-mixing, no f.d. subrep}, $\pi_0$ does not have any finite dimensional subrepresentation. However, $\xi_n=2\chi _{B_n}-1$ provides an almost $\Gamma$-invariant sequence in $L_0^p(\Omega, \nu)$.
\end{proof}

\begin{rk}
One can prove the implication $(i)\Rightarrow(iii)$ in Theorem \ref{thm:TLp_for_discrete_gps} without passing by weak property $(\mathrm{T}_{SL^p})$ for $F^*_{\lambda_p}(G)$. Indeed, if $\Gamma$ has property $(\mathrm{T}_{SL^p})$ then any isometric representation on a closed subspace of an $L^p$-space with almost $\Gamma$-invariant vectors contains an invariant vector and hence a $1$-dimensional subrepresentation.
\end{rk}

\begin{rk}
In Theorem \ref{thm:TLp_for_discrete_gps}, the core of the proof of the implication $(ii)\Rightarrow(iii)$ is to show that the set $\mathrm{Cl}(t_0)$ is finite. Observe that, to reach this conclusion, it is shown that its Haar-measure is finite. The conclusion that $\mathrm{Cl}(t_0)$ is finite is therefore contingent on the discreteness of $\Gamma$. It does not seem feasible to us to extend this proof to a larger class of groups.
\end{rk}

\begin{rk}
    Theorem \ref{thm:TLp_for_discrete_gps} above remains true when exchanging $F^*_{\lambda_p}(\Gamma)$ with $F_{\lambda_p}(\Gamma)$.
\end{rk}

\begin{paragraph}{Acknowledgements}
The second named author was supported by Knut and Alice Wallenberg foundation through grant number 31001288.

We thank the organizers of the workshop on $C^*$-algebras and geometry of semigroups and groups held at the University of Oslo, where the initial ideas were discussed.
We are grateful to Nadia Larsen, Tim de Laat, N. Christopher Phillips, Mikael de la Salle and Alain Valette for comments on an earlier draft of this paper. The first named author thanks Nadia Larsen for many discussions and Mikael de la Salle for his insightful questions and suggestions and for discussions on subtleties in the different versions of property $(\mathrm{T}_{\mathcal{E}})$ for groups. Finally we are very grateful to the referee for the careful reading of our article and for very useful comments.
\end{paragraph}

\bibliography{bib}
\bibliographystyle{plain}

\vspace{2em}
\begin{minipage}[t]{0.45\linewidth}
  \small    Emilie Mai Elkiær\\
            Department of Mathematics\\
            University of Oslo\\
            0851 Oslo, Norway\\
		    {\footnotesize elkiaer@math.uio.no}
 \\   
 \\
 \small Sanaz Pooya \\
        Institute of Mathematics\\
        University of Potsdam\\
		14476 Potsdam, Germany\\
        {\footnotesize sanaz.pooya@uni-potsdam.de}
\end{minipage}

\end{document}